\DeclareMathOperator{\cl}{cl}
\DeclareMathOperator{\conv}{conv}
\DeclareMathOperator{\cor}{cor}
\DeclareMathOperator{\intt}{int}
\DeclareMathOperator{\bd}{bd}
\DeclareMathOperator{\R}{\mathbb{R}}
\DeclareMathOperator{\Rpos}{\mathbb{P}}
\DeclareMathOperator{\uB}{\mathbb{B}}
\DeclareMathOperator{\uS}{\mathbb{S}}
\DeclareMathOperator{\A}{\mathbb{A}}
\DeclareMathOperator{\Eff}{Eff}
\DeclareMathOperator{\WEff}{WEff}
\DeclareMathOperator{\PEff}{PEff}
\theoremstyle{thmstyleone}%
\newtheorem{theorem}{Theorem}
\newtheorem{proposition}[theorem]{Proposition}%
\newtheorem{lemma}[theorem]{Lemma}%
\newtheorem{corollary}[theorem]{Corollary}%
\theoremstyle{thmstyletwo}%
\newtheorem{example}{Example}%
\newtheorem{remark}{Remark}%
\theoremstyle{thmstylethree}%
\newtheorem{definition}{Definition}%
\begin{document}

\title[Bishop-Phelps Type Scalarization for Vector Optimization]{Bishop-Phelps Type Scalarization for Vector Optimization in Real Topological-Linear Spaces}


\author[1]{\fnm{Christian} \sur{G\"unther}}\email{c.guenther@ifam.uni-hannover.de}
\equalcont{These authors contributed equally to this work.}

\author*[2]{\fnm{Bahareh} \sur{Khazayel}}\email{Bahareh.Khazayel@mathematik.uni-halle.de}
\equalcont{These authors contributed equally to this work.}

\author[3]{\fnm{Radu} \sur{Strugariu}}\email{rstrugariu@tuiasi.ro}
\equalcont{These authors contributed equally to this work.}

\author[2]{\fnm{Christiane} \sur{Tammer}}\email{Christiane.Tammer@mathematik.uni-halle.de}
\equalcont{These authors contributed equally to this work.}

\affil[1]{\orgdiv{Institut f\"ur Angewandte Mathematik}, \orgname{Leibniz Universit\"at Hannover}, \orgaddress{\street{Welfengarten 1}, \city{Hannover}, \postcode{30167}, \country{Germany}}}

\affil[2]{\orgdiv{Institute of Mathematics}, \orgname{Martin Luther University Halle-Wittenberg, Faculty
		of Natural Sciences II}, \city{Halle (Saale)}, \postcode{06099},  \country{Germany}}

\affil[3]{\orgdiv{Department of Mathematics and Informatics}, \orgname{Gheorghe Asachi Technical University of Ia\c si}, \orgaddress{\city{Ia\c si}, \postcode{700506}, \country{Romania}}}


\abstract{It is well-known that scalarization techniques (e.g., in the sense of Gerstewitz; Kasimbeyli; Pascoletti and Serafini; Zaffaroni) are useful for generating (weakly, properly) efficient solutions of vector optimization problems. One recognized approach is the conic scalarization method in vector optimization in real normed spaces proposed by Kasimbeyli (2010, SIAM J Optim 20), which is based on augmented dual cones and Bishop-Phelps type (norm-linear) scalarizing functions. In this paper, we present new results on cone separation in real topological-linear spaces by using Bishop-Phelps type separating cones / separating seminorm-linear functions. Moreover, we study some extensions of known scalarization results in vector optimization (in the sense of Eichfelder; Gerstewitz; Jahn; Kasimbeyli; Pascoletti and Serafini). On this basis, we propose a Bishop-Phelps type scalarization method for vector optimization problems in real topological-linear spaces, which is based on Bishop-Phelps type cone-representing and cone-monotone scalarizing functions (e.g., Gerstewitz scalarizing functions or seminorm-linear scalarizing functions). Thus, our method also extends Kasimbeyli's conic scalarization method from real normed spaces to real topological-linear spaces. Within this framework, we derive new Bishop-Phelps type scalarization results for the concepts of weak efficiency and different types of proper efficiency.}

\keywords{Vector optimization, (proper, weak) efficiency, Bishop-Phelps cone, augmented dual cone,  Bishop-Phelps type scalarization, conic scalarization}


\pacs[MSC Classification]{90C48, 90C29, 06F20, 52A05}

\maketitle
\section{Introduction} \label{sec:introduction}


The field of vector optimization in real topological-linear spaces is concerned with the minimization of a vector-valued function $f: X \to Y$ acting between real topological-linear spaces $X$ and $Y$ with respect to a convex cone $C \subseteq Y$ over a nonempty feasible set $\Omega \subseteq X$. A review of the existing literature reveals that many papers focus on the (finite dimensional) normed setting and, in particular, the analysis includes real normed spaces $Y$ where the convex cone $C$ is of Bishop-Phelps type, see e.g. \cite{CastanoEtAl2023}, \cite{DST2013, DST2017}, \cite{Eichfelder2008, Eichfelder2009}, \cite{Gasimov}, \cite{GJN2006}, \cite{Kasimbeyli2010, Kasimbeyli2013}, \cite{PascolettiSerafini1984}, \cite{Zaffaroni2003}, \cite[Ch. 7 and Sec. 15.2]{TammerWeidner2020}, \cite{EichfelderKasimbeyli2014}, \cite{GuenKhaTam23a, GuenKhaTam23b}, \cite{HaJahn2017, HaJahn2021}, \cite{Ha2022},\cite{Jahn2009, Jahn2023}, and references therein. The importance of the class of Bishop-Phelps cones can be emphasized by the fact that a nontrivial, closed, convex cone in a real normed space is representable as a Bishop-Phelps cone if and only if it is well-based.
We are primarily interested in the more general setting of a real (possibly infinite-dimensional and not normed) topological-linear space $Y$  motivated by optimization problems under uncertainty and problems from mathematical finance. Especially, robust counterparts to optimization problems under uncertainty can be formulated as vector optimization problems with an infinite-dimensional topological-linear image space (see G\"opfert et al. \cite[Ch. 6]{GoeRiaTamZal2023} and references therein). Furthermore, in arbitrage theory (see the class of financial market models in Marohn and Tammer \cite{MarohnTammer2021, MarohnTammer2022}, and Riedel \cite{Riedel2015}), problems in the setting of linear spaces are considered.
Some further works in vector optimization that focus on this setting are \cite{BotGradWanka}, \cite{Gerstewitz1983, Gerstewitz1984}, \cite{GerthWeidner}, \cite{GoeRiaTamZal2023}, \cite{GunKobSchmoTam2023}, \cite{GJN2015}, \cite{Grad2015}, \cite{Jahn2011, Jahn2023}, \cite{TammerWeidner2020}.

\par

The main focus in our paper is on the derivation of scalarization results for (weakly, properly) efficient solutions of the given vector optimization problem, where the proper efficiency concepts considered in this paper cover certain known concepts of proper efficiency from the literature (e.g., in the sense of Benson \cite{Benson1979}; Borwein \cite{Borwein1977, Borwein1980}; Henig \cite{Henig2}; Hurwicz \cite{Hurwicz}). 
It is well established that scalarization methods are useful for solving vector optimization problems. A notable feature of such methods is the ability to compute weakly efficient solutions by solving scalar optimization problems, which are often parameter-dependent, instead of the original vector optimization problem. 
Moreover,  using scalarization, one can derive necessary and sufficient optimality conditions for the initial vector optimization problem, using generalized differentiation concepts for the new, scalarized objective function (see, e.g.,  \cite{GM2004}, \cite{GJN2006}, \cite{DT2006}, \cite{GJN2010}, \cite{DST2013}, \cite{GJN2015}, \cite{DST2017}). 
The application of the scalarization methods for Henig dilating cones (enlarging convex cones) of the convex cone $C$ yields solutions that are properly efficient in the sense of Henig \cite{Henig2}. The relationships of the Henig proper efficiency concept to other known proper efficiency concepts have been well studied in the literature (see, e.g., \cite[Sec. 2.4.3]{BotGradWanka}, \cite[Ch. 3]{Grad2015}, \cite{GuerraggioMolhoZaffaroni}, \cite[Sec. 2.4]{Khanetal2015} and references therein). 

One recognized approach in vector optimization in a real normed space $(Y, ||\cdot||)$ is the \textbf{conic scalarization method} in vector optimization proposed by 
Kasimbeyli \cite{Kasimbeyli2010}, which is based on the following parameterized scalar optimization problem:

\begin{equation}
    \label{scal_problem_Kas_normed}
	\begin{cases}
		x^*(f(x) - a) + \alpha ||f(x) - a||\to \min \\
		x \in \Omega
	\end{cases}
\end{equation}
with a pair $(x^*, \alpha) \in Y^* \times \mathbb{R}_+$ (respectively, $(x^*, \alpha)$ belongs to the so-called augmented dual cone of $C$) and a given point $a \in Y$. Examining \eqref{scal_problem_Kas_normed}, one can see that Kasimbeyli's method is based on Bishop-Phelps type norm-linear scalarizing functions (i.e., functions that are a nonnegative weighted sum of the norm $||\cdot||$ with a linear continuous functional $x^*$, see also Zaffaroni \cite{Zaffaroni2022}), where the involving norm $||\cdot||$ is exactly the underlying norm in the real normed space $Y$. 
Several relationships between (weakly, properly) efficient solutions of the original vector problem and solutions of scalar problems of type \eqref{scal_problem_Kas_normed} are known (see \cite{CastanoEtAl2023}, \cite{Gasimov}, \cite{GuoZhang2017}, \cite{Kasimbeyli2010, Kasimbeyli2013}, \cite{Kasimbeyli2019}). 

In this paper, we propose a \textbf{Bishop-Phelps type scalarization method} for vector optimization problems in real topological-linear spaces, which is designed to complement and extend the existing scalarization frameworks. Our method is built around cone-representing and cone-monotone scalarizing functions, which can be chosen in a very broad way according to the underlying ordered topological vector space. In this sense, classical Gerstewitz-type scalarizing functions, as well as Bishop–Phelps type seminorm-linear scalarizing functions -- given by a nonnegative weighted sum of a continuous seminorm and a continuous linear functional -- appear as particular instances of the general construction. This flexibility is especially well-suited to the topological-linear space setting considered here, since it allows one to work for instance with continuous seminorms rather than a single norm and therefore adapts naturally to locally convex, and even nonnormable, frameworks. The underlying scalarizations of the vector problem will be of the types (in the sense of Gerstewitz  \cite{Gerstewitz1983,Gerstewitz1984}; Jahn \cite{Jahn2023}; Pascoletti and Serafini \cite{PascolettiSerafini1984}):
\begin{equation*}
	\label{scal_problem_phi_1}
    \tag{\ensuremath{P_{\varphi}^{a}}}
	\begin{cases}
		\varphi(f(x) - a)\to \min \\
		x \in \Omega
	\end{cases}
\end{equation*}
and 
\begin{equation*}
	\label{scal_problem_phi_2}
	\tag{\ensuremath{P_{\varphi}^{a,k}}}
	\begin{cases}
		\lambda \to \min \\
		\varphi(f(x) - a - \lambda k) \leq 0,\\
		(x, \lambda) \in \Omega \times \mathbb{R},
	\end{cases}
\end{equation*}
where $a \in Y$, $k \in Y \setminus \{0\}$, and $\varphi: Y \to \mathbb{R}$ is a Bishop-Phelps (type) cone-representing function, i.e., the level set 
$$
C_{\rm BP} := -\{y \in Y \mid \varphi(y) \leq 0\}
$$
is a Bishop-Phelps (type) cone in $Y$.
In Kasimbeyli's conic scalarization method based on the scalar problem \eqref{scal_problem_Kas_normed}, the set
$$
C_{\rm BP} = - \{y \in Y \mid x^*(y) + \alpha ||y|| \leq 0\}
$$
with $(x^*, \alpha) \in (Y^* \setminus \{0\}) \times (\mathbb{R}_+ \setminus \{0\})$ is indeed a Bishop-Phelps cone in the real normed space $Y$. Hence, our method can be seen as an extension of Kasimbeyli's scalarization method from real normed spaces to real topological-linear spaces.
\par

Under the usual conditions, the scalar surrogate problems based on the Gerstewitz function generate weakly efficient solutions with respect to the cone involved in the definition of the Gerstewitz function (see \cite[Theorem 6.2.31]{TammerWeidner2020}). Since the scalar optimization problem by Pascoletti and Serafini (see \cite{PascolettiSerafini1984}) is a special case of the scalar surrogate problem described by the Gerstewitz function (see \cite[Section 5.1]{TammerWeidner2020}), this statement also applies to the scalar surrogate problem by Pascoletti and Serafini. Furthermore, the Hiriart-Urruty scalarization function introduced in \cite{HiriartUrruty1979} coincides with the Gerstewitz function if the distance function involved in the Hiriart-Urruty function is induced by the Minkowski functional of the order interval (see \cite[Theorem 9.3.5]{TammerWeidner2020}). Under this condition, the scalar surrogate problem based on the Hiriart-Urruty function also generates weakly efficient solutions.

Properly efficient solutions are very important from the theoretical as well as computational point of view. These solutions can be generated using the classical Gerstewitz function too, but the set involved in the definition of the function must be an enlargement cone of the ordering cone such that one has to construct such an enlargement cone. In our paper, we are doing this by using a Bishop–Phelps type cone based on a seminorm, in the definition of the Gerstewitz function. This leads us to a scalar surrogate problem where a Bishop-Phelps (type) cone-representing function is involved. The advantage of the new Bishop–Phelps type scalarization in \eqref{scal_problem_phi_1}  is that it can be used to generate directly properly efficient solutions to vector optimization problems. 

\par

The rest of the paper is organized as follows: In Section \ref{sec:preliminaries}, after presenting some basics in real topological-linear spaces including properties of seminorms, we study properties of cones and their bases, focusing in particular on augmented dual cones, Bishop-Phelps type cones and zero level sets of Bishop-Phelps type (seminorm-linear) functions. We then introduce the solution concepts of vector optimization necessary for our work. Section \ref{sec:scalarization} is devoted to the study of scalarization methods in vector optimization which are based on cone-monotone and cone-representing scalarizing functions. Besides classical scalarization results (in the sense of Eichfelder \cite{Eichfelder2008, Eichfelder2009}; Gerstewitz \cite{Gerstewitz1983, Gerstewitz1984}; Jahn \cite{Jahn2023}; Pascoletti and Serafini \cite{PascolettiSerafini1984}), we establish some new scalarization results (namely Propositions \ref{prop:result_scalarization_Jahn_2_new}, \ref{prop:result_scalarization_Jahn_3_new} and \ref{prop:Eichfelder}) for vector optimization problems in real topological-linear spaces, which extend the classical ones.
The main Section \ref{sec:conic_scalarization} is dedicated to Bishop-Phelps type scalarization for vector optimization in real topological-linear spaces. The parameterized scalarization problems \eqref{scal_problem_phi_1} and \eqref{scal_problem_phi_2} are formulated using 
Bishop-Phelps type cone-representing  and cone-monotone scalarizing functions (e.g., Gerstewitz \cite{Gerstewitz1983, Gerstewitz1984} scalarizing functions or seminorm-linear scalarizing functions).
In addition to the basic scalarization results for the original vector optimization problem (roughly speaking, solutions of the scalar problems are solutions of the vector problem), we also point out (for the concept of efficiency in Proposition \ref{prop:scal_result_Eff_BP} and for the concept of weak efficiency in Proposition \ref{prop:scal_result_WEff_BP}) the relationships between the original vector problem (based on the convex cone $C$), an updated vector problem based on an enlarging / dilating cone of Bishop-Phelps type, and scalar problems involving Bishop-Phelps type cone-representing and cone-monotone scalarizing functions.
In Section \ref{sec:cone_separation}, we present the key tool to prove our main Bishop-Phelps type scalarization results in vector optimization, namely the technique of nonlinear separation of two (not necessarily convex) cones by using Bishop-Phelps type separating cones / separating (seminorm-linear) functions. Besides presenting some classical nonlinear cone separation results, we also reflect on the separation conditions involved in both real topological-linear spaces and real normed spaces.
Propositions \ref{prop:separation_conditions_1} and \ref{prop:separation_conditions_2}
give some new insights into the relationships
between the classical separation condition (the intersection of two cones is the singleton set $\{0\}$) 
and the separation conditions used in the nonlinear separation approach based on Bishop-Phelps type cones / functions 
discussed in \cite{CastanoEtAl2023}, \cite{GuenKhaTam23a, GuenKhaTam23b}, \cite{Kasimbeyli2010, Kasimbeyli2013}. 
On this basis, in Corollaries \ref{cor:new_sep_1} and \ref{cor:new_sep_2} we present new variants of strict cone separation results involving the classical separation condition  in real locally convex spaces and real normed spaces by using Bishop-Phelps type separating cones / separating seminorm-linear functions.
Then, using nonlinear cone separation results (from Section \ref{sec:cone_separation}) and Bishop-Phelps type scalarization results (Propositions \ref{prop:scal_result_Eff_BP} and  \ref{prop:scal_result_WEff_BP}), we present new scalarization theorems (Theorem \ref{th:main_scalarization_result_WEff} for weak efficiency, 
Theorems \ref{th:main_scalarization_result_PEff}, \ref{th:main_scalarization_result_PEff_Henig_1} and \ref{th:main_scalarization_result_PEff_Henig_2} for proper efficiency) and their corollaries, which show that any (weakly, properly) efficient solution of the original  vector problem is a solution of the above mentioned (parameterized) scalar problems for certain choices of parameters. 
Finally, in Section \ref{sec:conclusions} we present a conclusion and an outlook for further research. 

\section{Preliminaries} \label{sec:preliminaries}

Throughout the paper, we assume that $Y$ is a real topological-linear space. 
As usual, $Y^*$ denotes the topological dual space of $Y$. 
In certain situations we will also consider a more specific setting where $(Y, ||\cdot||)$ is a real normed space. Consistently, 
the sets of nonnegative and positive real numbers are denoted by $\mathbb{R}_+$ and $\mathbb{P}$. Given a set $\Omega \subseteq Y$, we denote by ${\rm conv}\,\Omega$ the convex hull of $\Omega$. Moreover, we denote by ${\rm cl}\,\Omega$, ${\rm bd}\,\Omega$, and ${\rm int}\,\Omega$ the closure, the boundary, and the interior of $\Omega$ with respect to the underlying topology in $Y$. The algebraic interior (the core) of $\Omega$ is defined by $\cor\, \Omega := \{x \in \Omega \mid \forall\, v \in Y\; \exists\, \varepsilon > 0: \; x + [0, \varepsilon] \cdot v \subseteq \Omega\}.$ 

Given a seminorm $\psi: Y \to \R$ (i.e., $\psi$ is sublinear and symmetric), we define the $\psi$-unit ball by
$
\uB_{\psi} := \{y \in Y \mid \psi(y) \leq 1\}
$
and the $\psi$-unit sphere by
$
\uS_{\psi} := \{y \in Y \mid \psi(y) = 1\}.
$
In what follows, we will suppose that $\psi$ is nontrivial (i.e., $\psi \not\equiv 0$).
It is well-known that if $Y$ is a real topological-linear space and $\psi : Y \to \mathbb{R}$ is a seminorm, then $\psi $ is upper semicontinuous $\Longleftrightarrow$ $\{y\in
Y\mid \psi (y) < 1\}$ is open  $\Longleftrightarrow$ $\psi $ is continuous at the origin $\Longleftrightarrow$ $\psi $ is continuous. Moreover, $\psi$ is lower semicontinuous  $\Longleftrightarrow$ $\uB_{\psi}$ is closed. Remark also that $\{y\in Y\mid \psi (y) = 0\} = \{0\}$ $\Longleftrightarrow$ $\psi$ is a norm. The singleton set $\{0\}$ is closed $\Longleftrightarrow$ the topological-linear space $Y$ is Hausdorff. Moreover, it is known (see, e.g.,  Gelfand \cite{Gelfand1935, Gelfand1938}, Eberlein \cite{Eberlein1946}) that if $Y$ is a Banach space and $\psi: Y \to \mathbb{R}$ is a lower semicontinuous seminorm, then $\psi$ is continuous. 

A set $K \subseteq Y$ is called a cone if $0 \in K = \R_+ \cdot K$. A cone $K \subseteq Y$ is said to be convex if $K + K = K$ (or equivalently, $K$ is a convex set); proper or nontrivial if $\{0\} \neq K \neq Y$; pointed if $\ell(K) : = K \cap (-K) = \{0\}$. 
Given a convex cone $K \subseteq Y$ in the real topological-linear space $Y$, the set $\ell(K)$ is known as the lineality space of $K$.  In this case, $K$ is a linear subspace of $Y$ $\Longleftrightarrow$ $K = \ell(K)$, and moreover, $\intt\,K = K + \intt\,K$ and $K \setminus \ell(K) = K + (K \setminus \ell(K))$.
If $K \subseteq Y$ is a cone, then $K \neq Y \iff 0 \notin \cor\,K \iff 0 \notin \intt\,K,$
and if further $K$ is convex, then (see also \cite[Lem. 2.10]{Khazayel2021a}) $K \neq Y \iff \ell(K) \cap \intt\,K = \emptyset.$

The topological dual cone of a cone $K \subseteq Y$ is given by $K^+ := \{y^* \in Y^* \mid \forall\, k \in K:\; y^*(k) \geq 0\}.$
Furthermore, the subset $K^\# := \{y^* \in Y^* \mid \forall\, k \in K \setminus \{0\}:\; y^*(k) > 0\}$
of $K^+$ is of special interest.
It is important to mention that both sets $K^+$ and $K^\#$ are convex for any (not necessarily convex) cone $K \subseteq Y$. Moreover, one has 
\begin{equation}
	\label{eq:K+=convK+=clvonvK+}
	K^{+} = (\conv\,K)^+ = (\cl(\conv\,K))^+ \text{ and } (\cl(\conv\,K))^\# \subseteq K^{\#} = (\conv\,K)^\#,   
\end{equation}
but the inclusion $(\cl(\conv\,K))^\# \subseteq K^{\#}$ can be strict (see G\"opfert et al.  \cite[p. 55]{GoeRiaTamZal2023}).

In the following definition, we recall a general base concept for cones in real topological-linear spaces (cf. G\"opfert et al.  \cite[Def. 2.1.42]{GoeRiaTamZal2023} for the case of a convex cone). 

\begin{definition}[Base] \label{def:top_base_cone}
	Consider a nontrivial cone $K \subseteq Y$. A set $B \subseteq K$ is called a \textbf{base} for $K$, if
	$B$ is a nonempty set,  and $K = \R_+ \cdot B$ with $0 \notin \cl\, B$.
	Moreover, $K$ is said to be well-based if there exists a bounded, convex base of $K$.
\end{definition}

\begin{remark} \label{rem:clconvKwell-based}
	Taking into account \eqref{eq:K+=convK+=clvonvK+}, for any nontrivial cone $K \subseteq Y$ in a real normed space $Y$, it is well-known (see \cite[Prop. 2.2.23 and 2.2.32]{GoeRiaTamZal2023}) that
	\begin{equation} \label{eq:KbasedKSharpneqEmpty}
		\conv\,K \text{ has a convex base} \iff  K^\# \neq \emptyset,\end{equation} 
	and 
	\begin{equation} \label{eq:KwellbasedIntK+neqEmpty}
		\conv\,K \text{ is well-based}\iff \cl(\conv\,K) \text{ is well-based} \iff \intt\,K^{+}\neq\emptyset .
	\end{equation} 
	Moreover, if $Y$ has finite dimension and $K$ is closed, then $K^\# = \intt\,K^{+}$ (see \cite[Th. 2.1(4), Rem. 2.6]{GuenKhaTam23b}), hence all the conditions involved in \eqref{eq:KbasedKSharpneqEmpty} and \eqref{eq:KwellbasedIntK+neqEmpty} are equivalent.
\end{remark}

\begin{example}[Bishop-Phelps cone] \label{ex:Bishop-Phelps_cone}
	Assume that $Y$ is a real normed space with the underlying norm $||\cdot||: Y \to \mathbb{R}$. For any $(x^*, \alpha) \in Y^* \times \Rpos$, a \textbf{Bishop-Phelps cone} (in the sense of Bishop and Phelps \cite{BP1962}) is defined by
    \begin{equation*}
        C_{||\cdot||}(x^*,\alpha) := \{ y \in Y \mid x^*(y) \geq \alpha ||y|| \}.
    \end{equation*}
	It is known that Bishop-Phelps cones have a lot of useful properties and there are interesting applications in variational analysis and optimization (see, e.g., Ha and Jahn \cite{HaJahn2017, HaJahn2021}, Kasimbeyli \cite{Kasimbeyli2010}, Phelps \cite{Phe93}). 
	Any Bishop-Phelps cone $C_{||\cdot||}(x^*,\alpha)$ is a closed, pointed, convex cone. If $||x^*||_* > \alpha$ (where $||\cdot||_*: Y^* \to \mathbb{R}$ denotes the dual norm of $||\cdot||$), then $C_{||\cdot||}(x^*,\alpha)$ is nontrivial and 
	$C_{||\cdot||}^>(x^*,\alpha) := \{ y \in Y \mid x^*(y) > \alpha ||y||\} = \cor\, C_{||\cdot||}(x^*,\alpha) = \intt\, C_{||\cdot||}(x^*,\alpha) \neq \emptyset$.
	Note that for $(x^*, \alpha) \in Y^* \times \Rpos$ with $||x^*||_* <  \alpha$ we have $C_{||\cdot||}(x^*,\alpha) = \{0\}$, while for $||x^*||_* \leq  \alpha$ we have $C_{||\cdot||}^>(x^*,\alpha) = \emptyset$.
	
    It is worth noting that every nontrivial, closed, convex cone $K \subseteq Y$ in a normed space $Y$ is \textbf{representable as a Bishop-Phelps cone} (i.e., there is a pair $(x^*, \alpha) \in Y^* \times \Rpos$ and an equivalent norm $||\cdot||' \sim ||\cdot||$ such that $K = C_{||\cdot||'}(x^*,\alpha)$) if and only if $K$ is well-based (see the proof of Proposition \ref{prop:separation_conditions_2}). Note that any nontrivial, closed, pointed, convex cone $K$ in a finite-dimensional normed space $Y$ is well-based, since in this setting $K$ is well-based if and only if $K^\# \neq \emptyset$ (taking into account Remark \ref{rem:clconvKwell-based}), and the latter condition $K^\# \neq \emptyset$ is true by the well-known Krein-Rutman theorem (see, e.g., \cite[Th. 2.4.7]{Khanetal2015}).
    Further properties of Bishop-Phelps cones are studied in \cite[Th. 4.5, Lem. 4.2]{KasKas17}, particularly another characterization for cones which are representable as a Bishop-Phelps cone (a so-called representation theorem). See also Lemma  \ref{lem:BPcone} and Remark \ref{rem:Representation_Theorem}.
\end{example}

Moreover, we recall another (algebraic) base concept for cones in linear spaces (cf. G\"opfert et al. \cite[Def. 2.1.14]{GoeRiaTamZal2023}, Jahn \cite[Def. 1.10 (d)]{Jahn2011} for the case of a convex cone).

\begin{definition}[Algebraic base] \label{def:alg_base_cone}
	Consider a nontrivial cone $K \subseteq Y$. A set $B \subseteq K \setminus \{0\}$ is called an \textbf{algebraic base} for $K$, if
	$B$ is a nonempty set,  and every $x \in K \setminus \{0\}$ has a unique representation of the form 
	$x = \lambda b$ for some $\lambda > 0$ and some $b \in B$.
\end{definition}

\begin{remark} \label{rem:bases}
	If $B$ is base in the sense of Definition \ref{def:top_base_cone} or Definition \ref{def:alg_base_cone} for the nontrivial cone $K \subseteq Y$, then $K = \R_+ \cdot B$ and $K \setminus \{0\} = \Rpos \cdot B$. 
\end{remark}

Our focus will be on a specific type of base in real topological-linear spaces, which we will recall in the following definition.

\begin{definition}[Normlike-base] \label{def:normlikebase}
	Consider a seminorm $\psi: Y \to \R$ and a cone $K \subseteq Y$. Define 
	\begin{center}
		$B_K := B_{K}(\psi) := \{x \in K \mid \psi(x) = 1\} = K \cap \uS_{\psi}$.
	\end{center}
	If $K$ is nontrivial and $\psi$ is positive on $K \setminus \{0\}$, then $B_K$ is called a \textbf{normlike-base} for $K$.
	If $\psi$ is a norm $||\cdot||: Y \to \R$, then $B_K(||\cdot||)$ is called a \textbf{norm-base} for $K$. 
	Note that $B_{K}(\psi)$ is a base in the sense of Definition \ref{def:alg_base_cone}, and if $\psi$ is continuous, then it is a base also in the sense of Definition \ref{def:top_base_cone}. 
\end{definition}

\begin{example}[Normlike-bases] \label{ex:normlike-bases}{\color{white}.}
\begin{itemize}
\item[a)] 
Consider a nontrivial cone $K\subseteq Y$ and some $x^*\in K^{\#}$.
Define a seminorm $\psi$ by $\psi(x):=|x^*(x)|$ for all $x\in Y$. Notice
that for dimension two or higher $\psi$ is not a norm. Since $x^*\in
K^{\#}$, we have $\psi(x)>0$ for all $x\in K\setminus\{0\}$, hence $B_{K}%
(\psi)$ is a normlike-base for $K$. Notice that for $x^*\in K^{\#}$, we
have
\[
B_{K}(\psi)=K\cap\mathbb{S}_{\psi}=\{x\in K\mid|x^*(x)|=1\}=\{x\in K\mid
x^*(x)=1\},
\]
which is a usual type of bases for convex cones.
\item[b)]  
From \cite[Lem. 2.16]{GuenKhaTam23a} we have the following results:
    
Consider a function $\varphi: Y \to \mathbb{R}$, a function $\psi_{\max}: Y \to \mathbb{R}$ defined by
\begin{equation}
\label{eq:psimax}
\psi_{\max}(y) := \max\{\varphi(y), \varphi(-y)\}
\quad \text{
for all } y \in Y,
\end{equation}
and assume that $K, A \subseteq Y$ are nontrivial cones.
Then, the following assertions hold:
\begin{itemize}
    \item[$1^\circ$] If $\varphi$ is sublinear  (i.e., $\varphi$ is positive homogeneous and subadditive), then $\psi_{\max}$ is a seminorm.
    \item[$2^\circ$] Assume that $\varphi$ is $-K$ representing. Then,  $\ell(K) = \{y \in Y \mid \psi_{\max}(y) \leq 0\}.$
    Moreover, we have:
    
    $\psi_{\max}$ is nonnegative on $Y$ $\iff$ $\ell(K) = \{y \in Y \mid \psi_{\max}(y) = 0\}$.
    
    $\psi_{\max}$ is positive on $Y \setminus \{0\}$ $\iff$  $K$ is pointed. 
            
    \item[$3^\circ$] Assume that $\varphi$ is sublinear and $-K$ representing. Then, $\psi_{\max}$ is a seminorm. Moreover, we have:
    
    $\ell(K) \cap A = \{0\} \iff$ $B_{A}(\psi_{\max})$ is a normlike-base for $A$.

    \item[$4^\circ$] If $K$ is pointed, $\varphi$ is sublinear and $-K$ representing, then $\psi_{\max}$ is a norm, hence        
    $B_{K}(\psi_{\max})$ and 
    $B_{A}(\psi_{\max})$ are norm-bases for $K$ and $A$, respectively.
\end{itemize} 

\item[c)] Consider a nontrivial cone $K\subseteq Y$ and some $x^*\in K^{\#}$. Assume that $A=\bigcup_{i=1}^{l}A_{i}$ for some nontrivial cones
$A_{1},\ldots,A_{l}\subseteq Y$, and consider $y_{i}^*\in(A_{i})^{\#}$
for all $i=1,\ldots,l$. Define a continuous seminorm $\psi$ (which is not necessarily a norm) by
\begin{equation} \label{eq:seminorm_1}
    \psi(x):=\max\{|x^*(x)|,|y_{1}^*(x)|,\ldots,|y_{l}^*%
(x)|\}\quad\text{for all }x\in Y,
\end{equation}
or by
\begin{equation} \label{eq:seminorm_2}
\psi(x):=|x^*(x)|+\sum_{i=1}^{l}\,|y_{i}^*(x)|\quad\text{for all
}x\in Y.
\end{equation}
Then, $\psi(x)>0$ for all $x\in(K\cup A)\setminus\{0\}$, hence $B_{K}(\psi)$
and $B_{A}(\psi)$ are normlike-bases for $K$ and $A$. Note that $\psi$ is a norm if and only if $\{x \in Y \mid x^*(x) = 0 = y_{1}^*(x) = \ldots = y_{m}^*(x)\} = \{0\}$. The latter condition is only valid if $Y$ has finite dimension.
\end{itemize}
\end{example}

Assume that $Y$ is a topological-linear space,  $K \subseteq Y$ is a nontrivial cone, and $\psi: Y \to \mathbb{R}$ is a seminorm. For any $(x^*, \alpha) \in Y^* \times \mathbb{R}_+$, we consider the \textbf{Bishop-Phelps type (seminorm-linear) function} $\varphi_{x^*, \alpha} : Y \to \mathbb{R}$ defined by
\begin{equation}\label{eq:seminorm_lin_fcn}
    \varphi_{x^*, \alpha}(y)  := x^*(y) + \alpha \psi(y) \quad \text{for all } y \in Y,
\end{equation}
as used in \cite{GuenKhaTam23a}.
Note that the seminorm $\psi$  must not have anything to do with the underlying topology of the real (locally convex) topological-linear space. Several examples of seminorms $\psi$, which are not necessarily norms, that can be used in the definition of the function $\varphi_{x^*, \alpha}$ in \eqref{eq:seminorm_lin_fcn} as well as in the subsequent study presented below, are discussed in \cite[Sec. 2]{GuenKhaTam23a}. Bishop-Phelps type (norm-linear) functions in a normed setting (i.e., $\psi$ is given by the underlying norm of the space) were studied for instance by Garc\'{i}a-Casta\~{n}o, Melguizo-Padial and Parzanese \cite{CastanoEtAl2023}, Gasimov (Kasimbeyli) \cite{Gasimov}, G\"unther, Khazayel and Tammer \cite{GuenKhaTam23b}, Guo and Hang \cite{GuoZhang2017}, Ha \cite{Ha2022}, Jahn \cite{Jahn2023}, Kasimbeyli \cite{Kasimbeyli2010, Kasimbeyli2013}, and Zaffaroni \cite{Zaffaroni2022}.

Following the definitions of so-called augmented dual cones by Gasimov \cite{Gasimov}, Kasimbeyli \cite{Kasimbeyli2010} (see also \cite{GuenKhaTam23a} for some generalizations), we consider a (generalized) \textbf{augmented dual cone} by
$$
K^{a+}(\psi)  = \{(x^*, \alpha) \in Y^* \times \mathbb{R}_+ \mid \forall\, y \in K:\; x^*(y) - \alpha \psi(y) \geq 0\} \quad (\subseteq K^+ \times  \mathbb{R}_+)
$$
as well as the following sets
\begin{align*}
	K^{a\circ}(\psi) & = \{(x^*, \alpha) \in Y^* \times \mathbb{R}_+ \mid \forall\, y \in \intt\, K:\;  x^*(y) - \alpha \psi(y)  > 0\} &&\quad (\subseteq (K^+ \setminus \{0\}) \times  \mathbb{R}_+ \text{ if } \intt\, K  \neq \emptyset),\\
	K^{a\#}(\psi) & = \{(x^*, \alpha) \in Y^* \times \mathbb{R}_+ \mid \forall\, y \in K \setminus \{0\}:\;  x^*(y) - \alpha \psi(y) > 0\}  &&\quad (\subseteq K^\# \times  \mathbb{R}_+).
\end{align*}
If no danger of confusion arises, we will use the notations $K^{a+}$, $K^{a\circ}$ and $K^{a\#}$. 

\begin{remark} 
	Note that $(\conv\,K)^{a+} = K^{a+}$ and $(\conv\,K)^{a\#} = K^{a\#}$, and if $\psi$ is lower semicontinuous, then 
	$(\cl(\conv\,K))^{a+} = K^{a+}$.
	It is a simple observation that for a normlike-base $B_K = B_K(\psi)$ of $K$ we have
	\begin{align*}
		K^{a+} & = \{(x^*, \alpha) \in Y^* \times \mathbb{R}_+ \mid \forall\, y \in B_K:\; x^*(y) \geq  \alpha\},\\
		K^{a\#} & = \{(x^*, \alpha) \in  Y^* \times \mathbb{R}_+ \mid \forall\, y \in B_K:\;  x^*(y) > \alpha\}.
	\end{align*}
	Furthermore, it is easy to observe that 
	$K^{a\#} \subseteq K^{a\circ} \cup K^{a+}$, and if $K$ is solid and convex, and $\psi$ is lower semicontinuous, then $K^{a\circ} \subseteq K^{a+}$.  
\end{remark}

In the following definition, we introduce two classes of functions that satisfy cone (interior) representation properties (cf. Ha \cite[Sec. 3.1]{Ha2022}, Jahn \cite[Th. 3.1]{Jahn2023}, see also Kasimbeyli \cite[Th. 3.8 and 3.9]{Kasimbeyli2010}).

\begin{definition}[Representing functionals] 
	The functional $\varphi: Y \to \mathbb{R}$ is 
	\begin{itemize}
		\item \textbf{$-K$ representing} if 
		$
		-K = \{y \in Y \mid \varphi(y) \leq 0\}.
		$
		\item\textbf{strictly $-K$ representing} if  $\varphi$ is $-K$ representing and
		$
		-\intt\, K = \{y \in Y \mid \varphi(y) < 0\}.
		$
	\end{itemize}
\end{definition}

For any $(x^*, \alpha) \in Y^* \times \R_+$, the sublevel set of the Bishop-Phelps type function $\varphi_{x^*, \alpha}$ at level zero is defined by
\begin{align*}
	S^\leq_\psi(x^*, \alpha) &:= \{ y \in Y \mid  \varphi_{x^*, \alpha}(y) = x^*(y) + \alpha \psi(y) \leq 0 \},
\end{align*}	
while a \textbf{Bishop-Phelps type cone} is defined by
$$C_\psi(x^*, \alpha) := \{ y \in Y \mid x^*(y) \geq \alpha \psi(y) \}.$$ 

Remark that, in case $\psi = || \cdot ||$ and $\alpha > 0$, this reduces to the usual Bishop-Phelps cone (see Example \ref{ex:Bishop-Phelps_cone}). Moreover, the sets
\begin{align*}
	S^{<}_\psi(x^*, \alpha) &:=  \{ y \in Y \mid  \varphi_{x^*, \alpha}(y) = x^*(y) + \alpha \psi(y) < 0 \},\\
	C^{>}_\psi(x^*, \alpha) & := \{y \in Y \mid x^*(y) >\alpha \psi(y) \}
\end{align*}	
are of interest. In our work we will focus in particular on Bishop-Phelps type functions, since they are (strictly) $-K$ representing functions if $K$ is a Bishop-Phelps type cone. Let us first note that the sets $K^{a+}$, $K^{a\circ}$ and $K^{a\#}$ can be written as
\begin{align*}
	K^{a+} & = \{(x^*, \alpha) \in Y^* \times \mathbb{R}_+ \mid K \subseteq C_\psi(x^*, \alpha)\},\\
	K^{a\circ} & = \{(x^*, \alpha) \in Y^*\times \mathbb{R}_+ \mid \intt\, K \subseteq C^{>}_\psi(x^*, \alpha)\},\\
	K^{a\#} & = \{(x^*, \alpha) \in Y^* \times \mathbb{R}_+ \mid K \setminus \{0\} \subseteq C^{>}_\psi(x^*, \alpha)\}.
\end{align*}

We collect next some important properties of Bishop-Phelps type cones and functions. For the first lemma, the proof is straightforward and we omit it.

\begin{lemma} 
	\label{lem:properties_BP}
	Consider $(x^*, \alpha) \in Y^* \times \R_+$. The following assertions hold:
	\begin{itemize}
		\item[$1^\circ$] Cone property: 
		$0 \in C_\psi(x^*, \alpha),  \Rpos \cdot C_\psi(x^*, \alpha) = C_\psi(x^*, \alpha)$ and $\Rpos \cdot C^>_\psi(x^*, \alpha) = C^>_\psi(x^*, \alpha)$.	
		\item[$2^\circ$] Convexity: 
		$C_\psi(x^*, \alpha)$ and $C^>_\psi(x^*, \alpha)$ are convex.	   
		\item[$3^\circ$] Closedness: 
		If $\varphi_{x^*, \alpha}$ (respectively, $\psi$) is lower semicontinuous, then $C_\psi(x^*, \alpha)$ is closed.  
		\item[$4^\circ$] Pointedness: 
		$\ell(C_\psi(x^*, \alpha)) = \{y \in Y \mid x^*(y) = 0, \alpha \psi(y) = 0\}$. 
		If $\alpha > 0$, then $C_\psi(x^*, \alpha)$ is pointed if and only if $\max\{|x^*|, \psi\}$ is a norm. 
		If $(x^*, \alpha) \in K^{a\#}$, then $K \cap \ell(C_\psi(x^*, \alpha)) = \{0\}$. 
		\item[$5^\circ$] Nontriviality:
		If $(x^*, \alpha) \in K^{a+}$, then $C_\psi(x^*, \alpha) \neq \{0\}$.
		If $x^* \neq 0$ or $\alpha \psi(y) > 0$ for some $y \in Y \setminus \{0\}$, then $C_\psi(x^*, \alpha) \neq Y$.     
		\item[$6^\circ$] Solidness:
		Assume that $C^{>}_\psi(x^*, \alpha) \neq \emptyset$ (e.g. if either $(x^*, \alpha) \in K^{a\#}$ or $(x^*, \alpha) \in K^{a\circ}$ and $K$ is solid). Then, 
		$
		\cor\,C_\psi(x^*, \alpha) \neq \emptyset,
		$
		and if either $(x^*, \alpha) \in K^{a\circ}$ and $K$ is solid or $\varphi_{x^*, \alpha}$ (respectively, $\psi$) is continuous, then  
		$
		\intt\,C_\psi(x^*, \alpha) \neq \emptyset.
		$
		\item[$7^\circ$] Lineality:
		If $C^{>}_\psi(x^*, \alpha) \neq \emptyset$ (e.g. if $(x^*, \alpha) \in K^{a\#}$), and either $x^* \neq 0$ or $\alpha \psi(y) > 0$ for some $y \in Y \setminus \{0\}$, then $C_\psi(x^*, \alpha)$ is not a linear subspace (or equivalently, $C_\psi(x^*, \alpha) \neq \ell(C_\psi(x^*, \alpha))$).   
		\item[$8^\circ$] Dilating cone for $K$: 
		If $(x^*, \alpha) \in K^{a\#}$, then $C_\psi(x^*, \alpha)$ is a dilating cone for $K$, i.e., $C_\psi(x^*, \alpha)$ is a nontrivial, convex cone with $K \setminus \{0\} \subseteq  \cor\,C_\psi(x^*, \alpha)$, and if further  $\intt\,C_\psi(x^*, \alpha) \neq \emptyset$, then  $K \setminus \{0\} \subseteq  \intt\,C_\psi(x^*, \alpha)$.
	\end{itemize}
\end{lemma}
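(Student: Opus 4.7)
The strategy is to dispatch the eight items by unwinding the defining inequality $x^*(y)\ge\alpha\psi(y)$ together with the structural properties of $\psi$ (positive homogeneity, subadditivity, symmetry) and of the linear functional $x^*$. Many of the items are immediate one- or two-line computations, so I would present the proof item by item, grouping the routine parts and spending more care on the \emph{solidness} item $6^\circ$, which is the only place where a nontrivial argument appears. Throughout, I will use the rewriting $C_\psi(x^*,\alpha)=\{y\mid \alpha\psi(y)-x^*(y)\le 0\}$ and $C^>_\psi(x^*,\alpha)=\{y\mid \alpha\psi(y)-x^*(y)<0\}$, together with the fact that $g:=\alpha\psi-x^*$ is sublinear.

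For $1^\circ$ and $2^\circ$, I would invoke positive homogeneity of $g$ (which gives the cone property and $\mathbb{P}\cdot C^>_\psi=C^>_\psi$) and sublinearity of $g$ (which gives convexity of the sublevel sets). For $3^\circ$, lower semicontinuity of $\psi$ implies lower semicontinuity of $g$, so both sublevel sets $\{g\le 0\}$ are closed. For $4^\circ$ I would characterize $y\in\ell(C_\psi(x^*,\alpha))$ by the pair of inequalities $x^*(y)\ge\alpha\psi(y)$ and $-x^*(y)\ge\alpha\psi(y)$, which together with $\alpha\psi(y)\ge 0$ force $x^*(y)=0$ and $\alpha\psi(y)=0$; the norm case and the $K^{a\#}$ case then follow immediately. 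For $5^\circ$, the inclusion $K\subseteq C_\psi(x^*,\alpha)$ for $(x^*,\alpha)\in K^{a+}$ shows $C_\psi(x^*,\alpha)\ne\{0\}$; for $C_\psi(x^*,\alpha)\ne Y$ I would construct a witness: if $x^*\ne 0$ pick any $z$ with $x^*(z)<0$, and if $\alpha\psi(y_0)>0$ then one of $\pm y_0$ violates the defining inequality according to the sign of $x^*(y_0)$.

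The main item is $6^\circ$. Here I would fix $y_0\in C^>_\psi(x^*,\alpha)$, set $\delta:=x^*(y_0)-\alpha\psi(y_0)>0$, and for an arbitrary direction $v\in Y$ use subadditivity and positive homogeneity to estimate
\[
g(y_0+tv)\le g(y_0)+t\,g(v)=-\delta+t\,g(v)\quad\text{for all }t\ge 0,
\]
which is $\le 0$ for all $t\in[0,\varepsilon]$ provided we choose $\varepsilon:=\delta/\max\{g(v),1\}$ (or any $\varepsilon>0$ if $g(v)\le 0$). This yields $y_0\in\core C_\psi(x^*,\alpha)$, and in particular $C^>_\psi(x^*,\alpha)\subseteq\core C_\psi(x^*,\alpha)$. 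The nonemptiness witnesses come from: $(x^*,\alpha)\in K^{a\#}$ gives any $y\in K\setminus\{0\}$; or $(x^*,\alpha)\in K^{a\circ}$ with $K$ solid gives any $y\in\intt K$. For the topological interior, continuity of $\varphi_{x^*,\alpha}$ (equivalently of $\psi$) makes $C^>_\psi(x^*,\alpha)$ open and hence contained in $\intt C_\psi(x^*,\alpha)$; in the remaining subcase $(x^*,\alpha)\in K^{a\circ}$ with $K$ solid, $\intt K$ is an open subset of $C_\psi(x^*,\alpha)$, whence $\intt C_\psi(x^*,\alpha)\supseteq\intt K\ne\emptyset$.

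For $7^\circ$, any $y_0\in C^>_\psi(x^*,\alpha)$ satisfies $x^*(y_0)>\alpha\psi(y_0)\ge 0$, so $-y_0\notin C_\psi(x^*,\alpha)$ because $x^*(-y_0)=-x^*(y_0)<0\le\alpha\psi(-y_0)$; thus $y_0\in C_\psi(x^*,\alpha)\setminus\ell(C_\psi(x^*,\alpha))$. Finally, for $8^\circ$, the definition of $K^{a\#}$ is exactly $K\setminus\{0\}\subseteq C^>_\psi(x^*,\alpha)$, so $6^\circ$ gives $K\setminus\{0\}\subseteq\core C_\psi(x^*,\alpha)$; convexity of $C_\psi(x^*,\alpha)$ (from $2^\circ$) together with $\intt C_\psi(x^*,\alpha)\ne\emptyset$ yields $\core C_\psi(x^*,\alpha)=\intt C_\psi(x^*,\alpha)$ by the classical equality of algebraic and topological interior for convex sets with nonempty topological interior, so $K\setminus\{0\}\subseteq\intt C_\psi(x^*,\alpha)$. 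Nontriviality of $C_\psi(x^*,\alpha)$ as a dilating cone follows from $5^\circ$ since $x^*\in K^\#$ entails $x^*\ne 0$. The only place I expect any delicacy is the core computation in $6^\circ$ (the bookkeeping for the case $g(v)\le 0$), and the tacit appeal to $\core=\intt$ in $8^\circ$, which should be justified by a brief reference.
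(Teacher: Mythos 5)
Your proof is correct. The paper in fact omits the proof of this lemma entirely (``the proof is straightforward and we omit it''), and your item-by-item verification via the sublinear function $g=\alpha\psi-x^*$ — including the core estimate in $6^\circ$ and the appeal to $\cor = \intt$ for convex sets with nonempty interior in $8^\circ$ (a fact the paper itself invokes in the proof of the subsequent Lemma on Bishop--Phelps type functions) — is exactly the routine argument the authors had in mind.
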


\begin{lemma}  \label{lem:properties_BP_fcns}
	For any $(x^*, \alpha) \in Y^* \times \R_+$, the following assertions are valid:
	\begin{itemize}
		\item[$1^\circ$] $-C_\psi(x^*, \alpha) = S^\leq_\psi(x^*, \alpha)$ and $-C^{>}_\psi(x^*, \alpha) = S^{<}_\psi(x^*, \alpha)$.
		\item[$2^\circ$] $\varphi_{x^*, \alpha}$ is sublinear (convex) and  $-C_\psi(x^*, \alpha)$ representing.
		\item[$3^\circ$] $C^{>}_\psi(x^*, \alpha) \subseteq \cor\,C_\psi(x^*, \alpha)$, and if one has $C^{>}_\psi(x^*, \alpha) \neq \emptyset$, then $C^{>}_\psi(x^*, \alpha) = \cor\,C_\psi(x^*, \alpha)$.
		\item[$4^\circ$] If $C^{>}_\psi(x^*, \alpha) \neq \emptyset$ and $\intt\,C_\psi(x^*, \alpha) \neq \emptyset$, then
		$\varphi_{x^*, \alpha}$ is strictly $-C_\psi(x^*, \alpha)$ representing.
	\end{itemize}
\end{lemma}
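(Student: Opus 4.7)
The plan is to handle the four items in order, exploiting the sublinearity/symmetry of $\psi$ and convexity of $C_\psi(x^*,\alpha)$ established in Lemma \ref{lem:properties_BP}.

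For $1^\circ$, I would simply substitute $z = -y$ and use that $\psi$ is symmetric: then $y \in -C_\psi(x^*,\alpha)$ iff $-y \in C_\psi(x^*,\alpha)$ iff $-x^*(y) \geq \alpha \psi(-y) = \alpha \psi(y)$ iff $x^*(y) + \alpha\psi(y) \leq 0$, which is exactly $y \in S^\leq_\psi(x^*,\alpha)$. The strict version is identical, replacing $\geq$ and $\leq$ by their strict analogues. For $2^\circ$, sublinearity of $\varphi_{x^*,\alpha}$ follows from sublinearity of $x^*$ (linear hence sublinear) and of $\alpha\psi$ (a seminorm scaled by $\alpha\geq 0$), using that the sum of sublinear functions is sublinear. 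The $-C_\psi(x^*,\alpha)$-representing property is immediate from $1^\circ$: $\{y\mid \varphi_{x^*,\alpha}(y)\leq 0\} = S^{\leq}_\psi(x^*,\alpha) = -C_\psi(x^*,\alpha)$.

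For $3^\circ$, I first would show $C^{>}_\psi(x^*,\alpha) \subseteq \cor\,C_\psi(x^*,\alpha)$ by a direct estimate. Take $y \in C^>_\psi(x^*,\alpha)$, so $\delta := x^*(y) - \alpha\psi(y) > 0$. For any $v \in Y$, subadditivity of $\psi$ gives $\psi(y+tv) \leq \psi(y) + t\psi(v)$ for $t \geq 0$, so
\[
x^*(y+tv) - \alpha \psi(y+tv) \geq \delta - t\bigl(\alpha\psi(v) - x^*(v)\bigr).
\]
Choosing $\varepsilon > 0$ with $\varepsilon(\alpha\psi(v) - x^*(v)) \leq \delta$ (trivial if the coefficient is nonpositive) guarantees $y + [0,\varepsilon]v \subseteq C_\psi(x^*,\alpha)$. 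For the reverse inclusion under $C^{>}_\psi(x^*,\alpha) \neq \emptyset$, let $y \in \cor\,C_\psi(x^*,\alpha)$ and pick $v_0 \in C^{>}_\psi(x^*,\alpha)$. Assume toward contradiction $x^*(y) = \alpha\psi(y)$. By algebraic interiority in the direction $-v_0$, there is $t > 0$ with $y - tv_0 \in C_\psi(x^*,\alpha)$. Combining $x^*(y) - tx^*(v_0) \geq \alpha\psi(y - tv_0)$ with the triangle inequality estimate $\psi(y - tv_0) \geq \psi(y) - t\psi(v_0)$ and the assumed equality $x^*(y) = \alpha\psi(y)$ yields $x^*(v_0) \leq \alpha\psi(v_0)$, contradicting $v_0 \in C^{>}_\psi(x^*,\alpha)$. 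Hence $y \in C^{>}_\psi(x^*,\alpha)$.

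For $4^\circ$, the remaining observation is that for any convex subset $A$ of a real topological-linear space with $\intt\,A \neq \emptyset$ one has $\cor\,A = \intt\,A$ (a standard fact). Since $C_\psi(x^*,\alpha)$ is convex by Lemma \ref{lem:properties_BP}, the hypothesis $\intt\,C_\psi(x^*,\alpha) \neq \emptyset$ together with $3^\circ$ give $\intt\,C_\psi(x^*,\alpha) = \cor\,C_\psi(x^*,\alpha) = C^{>}_\psi(x^*,\alpha)$. Applying $1^\circ$ and $2^\circ$ then yields
\[
-\intt\,C_\psi(x^*,\alpha) = -C^{>}_\psi(x^*,\alpha) = S^{<}_\psi(x^*,\alpha) = \{y\in Y \mid \varphi_{x^*,\alpha}(y) < 0\},
\]
which, combined with the $-C_\psi(x^*,\alpha)$-representing property from $2^\circ$, gives strict representation. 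The only subtle step, and the likely main obstacle if one were not careful, is the contradiction argument in $3^\circ$: it relies crucially on subadditivity applied in the form $\psi(y) \leq \psi(y-tv_0) + t\psi(v_0)$ to get a useful lower bound on $\psi(y-tv_0)$, together with the strict inequality defining $v_0 \in C^>_\psi(x^*,\alpha)$.
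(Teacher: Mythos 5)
Your proposal is correct. Items $1^\circ$, $2^\circ$ and $4^\circ$ follow the same route as the paper: the equalities in $1^\circ$ are read off from the definitions using the symmetry of $\psi$, sublinearity in $2^\circ$ comes from summing the sublinear functions $x^*$ and $\alpha\psi$, and $4^\circ$ combines $1^\circ$ and $3^\circ$ with the standard fact that $\cor A=\intt A$ for a convex set $A$ with nonempty interior. The only genuine difference is in $3^\circ$: the paper simply cites Jahn [Prop.\ 3.6, 2023] (applied to the sublevel sets via $1^\circ$), whereas you supply a self-contained elementary argument. Your two estimates there are sound: the forward inclusion via $x^*(y+tv)-\alpha\psi(y+tv)\ge\delta-t(\alpha\psi(v)-x^*(v))$ with an explicit choice of $\varepsilon$, and the reverse inclusion by testing the core condition in the direction $-v_0$ for a fixed $v_0\in C^{>}_\psi(x^*,\alpha)$ and using $\psi(y)\le\psi(y-tv_0)+t\psi(v_0)$ to force $x^*(v_0)\le\alpha\psi(v_0)$, contradicting the choice of $v_0$. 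This buys a proof that does not depend on an external reference, at the cost of a little more length; the mathematical content is the same as what Jahn's proposition encapsulates for sublevel sets of sublinear functionals.
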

\begin{proof}
	$1^\circ$ Both equalities follow from the definitions.
	
	$2^\circ$ The sublinearity (convexity) of $\varphi_{x^*, \alpha}$  follows from the sublinearity (convexity) of both $x^*$ and $\psi$. In view of $1^\circ$, $\varphi_{x^*, \alpha}$ is $-C_\psi(x^*, \alpha)$ representing.
	
	$3^\circ$ This assertion follows from Jahn \cite[Prop. 3.6]{Jahn2023} taking into account $1^\circ$.
	
	$4^\circ$ Of course, $\varphi_{x^*, \alpha}$ is $-C_\psi(x^*, \alpha)$ representing by $2^\circ$. Assume that $C^{>}_\psi(x^*, \alpha) \neq \emptyset$ and $\intt\,C_\psi(x^*, \alpha) \neq \emptyset$. By $1^\circ$ and $3^\circ$ we get $-S^{<}_\psi(x^*, \alpha) = C^{>}_\psi(x^*, \alpha) = \cor\,C_\psi(x^*, \alpha)$. Since $C_\psi(x^*, \alpha)$ is a convex set with $\intt\,C_\psi(x^*, \alpha) \neq \emptyset$ it follows that
	$\intt\,C_\psi(x^*, \alpha) = \cor\,C_\psi(x^*, \alpha)$. Thus, we conclude
	$
	-\intt\,C_\psi(x^*, \alpha) = -\cor\,C_\psi(x^*, \alpha) =  -C^{>}_\psi(x^*, \alpha) = S^{<}_\psi(x^*, \alpha),
	$
	which shows that $\varphi_{x^*, \alpha}$ is strictly $-C_\psi(x^*, \alpha)$ representing.
\end{proof}

Note that the properties of $S^\leq_\psi(x^*, \alpha)$ and $S^<_\psi(x^*, \alpha)$, and the relationships between $S^\leq_\psi(x^*, \alpha)$ and $S^<_\psi(x^*, \alpha)$ follow immediately from the one related to $C_\psi(x^*, \alpha)$ and $C^>_\psi(x^*, \alpha)$.
Furthermore, note that the conditions
$C^{>}_\psi(x^*, \alpha) \neq \emptyset$ and $\intt\,C_\psi(x^*, \alpha) \neq \emptyset$ (as given in the assumptions of Lemma \ref{lem:properties_BP_fcns} ($4^\circ$)) do not imply each other, as can be seen below: 
	
Let us first give an example with $C^{>}_\psi(x^*, \alpha) =\emptyset$ and $\intt\,C_\psi(x^*, \alpha) \neq \emptyset$. According to \cite[Ex. 2.1]{HaJahn2021}, for $Y := \R^2$, $\psi(\cdot) := ||\cdot||_1$, $x^* := (1,1)$ with $||x^*||_\infty = 1, \alpha := 1$, one has 
$C_{\psi}(x^*, 1) = \{y\in \R^2 \mid x^*(y)  = ||y||_1\} = \R^2_+$ 
and in this case $C^{>}_\psi(x^*, \alpha) = \emptyset$, but $\intt\,C_\psi(x^*, \alpha) \neq \emptyset$. 

Next, we give an example with $C^{>}_\psi(x^*, \alpha) \neq \emptyset$ and $\intt\,C_\psi(x^*, \alpha) = \emptyset$. We endow a real infinite-dimensional (normed) space $Y$ with the weak topology, we put $\psi(\cdot) := ||\cdot||$ and we assume $||x^*||_* > \alpha > 0$. In this case, it is known that $C^{>}_{||\cdot||}(x^*, \alpha) = \intt_{||\cdot||}\,C_{||\cdot||}(x^*, \alpha) \neq \emptyset$. Since $C_{||\cdot||}(x^*, \alpha)$ is a closed, pointed cone in $(Y, ||\cdot||)$ by Lemma \ref{lem:properties_BP} ($1^\circ$, $3^\circ$, $4^\circ$), we conclude that
$\intt_w\,C_{||\cdot||}(x^*, \alpha) = \emptyset$, as the following proposition shows. Recall that, for a set $\Omega \subseteq Y$ in a real normed space $(Y, ||\cdot||)$, one denotes the weak interior and weak closure of $\Omega$ by ${\rm int}_{w}\, \Omega$ and ${\rm cl}_{w}\, \Omega$, respectively.
\begin{proposition} \label{prop:ClosedPointedConeHasEmptyWeakInterior}
	Consider a closed, pointed cone $C \subseteq Y$ in a real  infinite-dimensional normed space $(Y, ||\cdot||$). Then, we have $\intt_w\,C = \emptyset$.
\end{proposition}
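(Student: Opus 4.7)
The plan is to argue by contradiction: assume that $\intt_w C \neq \emptyset$ and pick some $y_0 \in \intt_w C$. By definition of the weak topology, there exist functionals $y_1^*, \ldots, y_n^* \in Y^*$ and $\varepsilon > 0$ such that the basic weak neighborhood
\begin{equation*}
y_0 + U \subseteq C, \quad \text{where} \quad U := \{y \in Y \mid |y_i^*(y)| < \varepsilon \text{ for } i = 1,\ldots,n\}.
\end{equation*}
The key observation is that $U$ contains the linear subspace $L := \bigcap_{i=1}^{n} \ker y_i^*$ in its entirety (every scalar multiple of any $\ell \in L$ lies in $U$ since all $y_i^*$ vanish on $L$). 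Because $Y$ is infinite dimensional while $L$ is the intersection of finitely many hyperplanes (or the whole space if $n = 0$), $L$ is infinite dimensional and in particular contains a nonzero element.

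Next, I would exploit the cone structure and closedness of $C$ to extract the lineality. Fix any $\ell \in L \setminus \{0\}$. Then $y_0 + t\ell \in C$ for every $t \in \mathbb{R}$, and since $C$ is a cone, for $t > 0$ we have
\begin{equation*}
\frac{1}{t}(y_0 + t\ell) = \frac{y_0}{t} + \ell \in C.
\end{equation*}
Letting $t \to +\infty$, the left-hand side converges in norm to $\ell$, and since $C$ is norm-closed, $\ell \in C$. Replacing $\ell$ with $-\ell \in L$ in the same argument yields $-\ell \in C$. Hence $\ell \in C \cap (-C) = \{0\}$, contradicting $\ell \neq 0$.

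The main (mild) obstacle is just keeping the topologies straight: the neighborhood $y_0 + U$ is weakly open (so the containment in $C$ is valid), but the limiting step $\frac{y_0}{t} + \ell \to \ell$ is carried out in the norm topology, which is legitimate since $C$ is norm-closed by hypothesis. No weak closedness of $C$ is needed. Everything else is routine: the fact that a finite intersection of kernels of continuous linear functionals on an infinite dimensional space is nontrivial, and the use of the pointedness assumption $C \cap (-C) = \{0\}$ at the very end, together finish the proof.
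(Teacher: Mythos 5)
Your proof is correct and follows essentially the same route as the paper's: both arguments place a full line $y_0+\mathbb{R}\cdot\ell$ inside the weak neighborhood (you justify this explicitly via the finite-codimensional kernel intersection, which the paper only asserts), then use the cone property to write $\tfrac{1}{t}(y_0+t\ell)=\tfrac{y_0}{t}+\ell\in C$, pass to the norm limit using closedness, and invoke pointedness to reach a contradiction. No gaps.
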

\begin{proof}
	On the contrary, assume that $\intt_w\,C \neq \emptyset$.
	Take some $c \in \intt_w\,C$, i.e., there is a weakly open neighbourhood $U(c) \subseteq Y$ around $c$ which satisfies $U(c) \subseteq C$.
	Since $Y$ has infinite dimension, the set $U(c)$ contains a line $c + \mathbb{R} \cdot v$ for some $v \in Y \setminus \{0\}$. This implies, for any $n \in \mathbb{N}$, we have
	$c n^{-1} \pm v = n^{-1} (c \pm n v) \in C$.
	Taking the limit $n \to \infty$, using the closedness of $C$, we conclude 
	$\pm v \in C$, or equivalently, $v \in C \cap (-C) = \ell(C) = \{0\}$, a contradiction.
\end{proof}
	
Remark also that, if $C^{>}_\psi(x^*, \alpha) \neq \emptyset$ and $\psi$ is continuous, then $\intt\,C_\psi(x^*, \alpha) \neq \emptyset$ by Lemma \ref{lem:properties_BP} ($6^\circ$).
In the second example from above, the weak upper semicontinuity of $||\cdot||$ is not satisfied, since $Y$ has infinite dimension (cf. \cite[Rem. 2.2]{GuenKhaTam23b}).

Let us present some examples of Bishop-Phelps type cones.
\begin{example}[Bishop-Phelps type cones] \label{ex:BPtypeCones}
{\color{white}.}
\begin{itemize}
    \item[a)]
    Assume that $Y$ has dimension two or higher. For given $y_i^* \in Y^* \setminus \{0\}$, $i = 1, \ldots, m$, we consider the polyhedral cone $C := \{x \in Y \mid \forall i = 1, \ldots, m:\; y_i^*(x) \geq 0\}$, which is closed and convex but not necessarily pointed (and therefore not necessarily well-based). Consider any $w := (w_1, \ldots, w_m) \in \intt\,\mathbb{R}^m_+$. Using the continuous linear functional $x^*_w := \sum_{i = 1}^m w_i y_i^* \in C^+$ and the continuous seminorm $\psi_w := \sum_{i = 1}^m w_i |y_i^*|$, we can write $C$ as a Bishop-Phelps type cone (depending on $w$):  
    \begin{align*}
        C = \left\{x \in Y \mid \sum_{i = 1}^m w_i y_i^*(x) \geq \sum_{i = 1}^m w_i |y_i^*(x)|\right\}
        = \{x \in Y \mid x_w^*(x) \geq \psi_w(x)\} 
        = C_{\psi_w}(x^*_w, 1).
    \end{align*}
    Note, if the polyhedral cone $C$ is pointed (i.e., $\{x \in Y \mid \forall i = 1, \ldots, m:\; y_i^*(x) = 0\} = \{0\}$; note that this can only be the case in finite dimension), then $\psi_w$ is a continuous norm and so $C = C_{\psi_w}(x^*, 1)$ is a Bishop-Phelps cone; otherwise, if $C$ is not pointed (e.g., if $Y$ has infinite dimension), then $C$ is not representable as a Bishop-Phelps cone $C_{||\cdot||}(y^*, \alpha)$ with $(y^*, \alpha) \in Y^* \times \Rpos$ and norm $||\cdot||$.
    
    \item[b)]
    Consider the real (Hilbert) sequence space $Y = \ell_2$ endowed with its canonical norm $||\cdot||_2  = \sqrt{\langle \cdot, \cdot\rangle}$, the natural ordering cone in $\ell_2$, namely $C := (\ell_2)_+ := \{x \in \ell_2 \mid \forall n \in \mathbb{N}:\; x_n \geq 0\}$, and the dual cone of $C$, namely $C^+ = (\ell_2)_+$  (i.e., $C$ is self-dual). Then, 
    $C$ is pointed, closed, convex with 
    $C^\# \neq \emptyset$, but $C$ is not solid and not well-based.
    Since $C$ is not well-based, it is known (by Petschke \cite{Petschke1990}) that $C$ is not representable as a Bishop-Phelps cone based on a continuous norm $\psi$ which is equivalent to $||\cdot||_2$. However, $C$ is representable as a Bishop-Phelps cone based on a continuous norm $\psi$ (which is not equivalent to $||\cdot||_2$). Let us show the latter fact: Consider $y = (y_n) \in \ell_2$ with $y_n > 0$ for all $n \in \mathbb{N}$ (e.g. $y_n = 2^{-n}$). Define $x_y^* \in C^\#$ by
    $x \mapsto x_y^*(x) := \langle y, x \rangle$ and $\psi_y: Y \to \mathbb{R}_{+} \cup \{+\infty\}$ by 
    $x \mapsto \psi_y(x) := \sum_{n = 1}^\infty y_n |x_n|$.
    Using $x^*_y$ and $\psi_y$, we can write $C$ as a Bishop-Phelps  cone (depending on $y$):  
    \begin{align*}
        C = (\ell_2)_+ 
        = \left\{x \in \ell_2 \mid \sum_{n = 1}^\infty y_n x_n \geq \sum_{n = 1}^\infty y_n |x_n|\right\} = \{x \in \ell_2 \mid x_y^*(x) \geq \psi_y(x)\} 
         = C_{\psi_y}(x^*_y, 1).
    \end{align*}
    We are going to prove that $\psi_y$ is indeed a continuous norm. 
    By the Cauchy-Schwarz inequality, for any $x \in \ell_2$, we have
    $$
    \psi_y(x) = \sum_{n = 1}^\infty y_n |x_n| \leq \underbrace{||y||_2}_{\in (0, +\infty)} \cdot ||x||_2.
    $$
    This shows that $\psi_y$ is a finite function.    
    Of course, $\psi_y$ is a seminorm, since $\psi_y$ is finite, nonnegative, absolute homogeneous, and subadditive. Note that 
    $\psi_y(x) = \sum_{n = 1}^\infty y_n |x_n| = 0 \iff  \forall n \in \mathbb{N}:\; x_n = 0 \iff x = 0 \in \ell_2.$
    We conclude that $\psi_y$ is a norm. In addition, $\psi_y$ is continuous, since the (semi)norm $\psi_y$ is bounded by a continuous (semi)norm $||\cdot||_2$.
    However, note that $\psi_y$ is not equivalent to the $||\cdot||_2$ norm; otherwise, $C$ would be well-based (this is known to be not the case).  
    
    \item[c)]
    Consider the function $\psi_{\max}: Y \to \mathbb{R}$ defined by \eqref{eq:psimax} in Example \ref{ex:normlike-bases}(b), where $\varphi: Y \to \mathbb{R}$ is a  sublinear function.
    For any $(x^*, \alpha) \in Y^* \times \Rpos$, the Bishop-Phelps type cone with respect to $\psi_{\max}$ is given by:
    \begin{align*}
        C_{\psi_{\max}}(x^*, \alpha) 
        & = \{y \in Y \mid x^*(y) \geq \alpha \max\{\varphi(y), \varphi(-y)\} \}\\
        & = \{y \in Y \mid x^*(y) \geq \alpha \varphi(y) \text{ and } x^*(y) \geq \alpha \varphi(-y)\} \}.
    \end{align*}
    Moreover, note that, for any function $\psi: Y \to \mathbb{R}$, the following statements are equivalent:
    \begin{itemize}
        \item[$1^\circ$] $\psi$ is a (continuous) seminorm.
        \item[$2^\circ$] There is a (continuous) sublinear function $\varphi: Y \to \mathbb{R}$ such that $\psi(y) = \max\{\varphi(y), \varphi(-y)\}$ for all $y \in Y$.
    \end{itemize}
    Hence, the family of Bishop-Phelps type cones (for a fixed pair $(x^*, \alpha) \in Y^* \times \Rpos$) given by
    $\{C_{\psi_{\max}}(x^*, \alpha)\mid \varphi: Y \to \mathbb{R} \text{ is (continuous and) sublinear}\}$ 
    is equal to the family of Bishop-Phelps type cones given by
    $\{C_{\psi}(x^*, \alpha)\mid \psi: Y \to \mathbb{R} \text{ is a (continuous) seminorm}\}$.
    
    \item[d)] Let $\mathcal{F}$ be the family of seminorms that generates the topology of the real locally convex space $Y$. Convex cones from the family of Bishop-Phelps type cones (for a fixed pair $(x^*, \alpha) \in Y^* \times \Rpos$) given by
    $\{C_{\psi}(x^*, \alpha)\mid \psi \in \mathcal{F}\}$
    could be of interest in order to define preorders in $Y$.
    In the case that the given space $Y$ is preordered but not partially ordered by a cone $K$, we get new ideas with considering Bishop-Phelps type type cones (since they are not necessarily pointed).
    
    Note that any real topological-linear space has a norm, which may not be continuous. Of course, normable spaces admit continuous norms. Furthermore, certain non-normable (Hausdorff) locally convex spaces admit continuous norms. For example, the space of smooth functions on a compact set with the Fréchet topology (the topology of uniform convergence on compact sets of all derivatives) is a non-normable Fréchet space, and thus a locally convex space, with a continuous norm, such as the supremum norm. However, there are infinite-dimensional non-normable (Hausdorff) locally convex spaces for which no continuous norms exist. For example, the sequence space $Y = \mathbb{R}^{\mathbb{N}}$ with product topology
    is a real Hausdorff locally convex space that is not normable. In particular, no continuous norm exists. Therefore, there are no Bishop-Phelps cones based on continuous norms. However, there are Bishop-Phelps type cones based on continuous seminorms, such as those from $\mathcal{F}$. This motivates the study of Bishop-Phelps type cones.
    
    We would also like to mention that pointed Bishop-Phelps type cones (based on a continuous seminorm/norm) only exist in spaces where continuous norms exist (due to the fact that $C_\psi(x^*, \alpha)$ is pointed if and only if $\max\{|x^*|, \psi\}$ is a (continuous) norm by Lemma \ref{lem:properties_BP} ($4^\circ$)).
    Consequently, non-pointed Bishop-Phelps type cones are particularly interesting in non-normable spaces, where no continuous norms exist.

\end{itemize}
\end{example}

\bigskip

In what follows, we will briefly introduce different solution concepts in vector optimization. Given two real topological-linear spaces $X$ and $Y$, a nonempty feasible set $\Omega \subseteq X$, and a vector-valued objective function $f: X \rightarrow Y$, we consider the following vector optimization problem:
\begin{equation}
	\label{vector_problem_P}
	\tag{\ensuremath{P}}
	\begin{cases}
		f(x) \to \min \mbox{ with respect to } K\\
		x \in \Omega,
	\end{cases}
\end{equation} 
where we assume throughout the paper that $K \subseteq Y$ is a cone. It is well-known that $K$ induces on $Y$ a reflexive binary relation $\leq_K$ defined, for any two points $y, \overline{y} \in Y$, by $y \leq_K \overline{y} :\Longleftrightarrow y \in \overline{y} - K.$
Note that if $K$ is convex, then $\leq_K$ is a preorder; if $K$ is pointed and convex, then $\leq_K$ is a partial order.
For notational convenience, we consider further the binary relations $\lneq_{K}$ and $<_K$ that are defined, for any two points $y, \overline{y} \in Y$, by
$y \lneq_{K} \overline{y} :\Longleftrightarrow y \in \overline{y} - K \setminus \{0\}, y <_K \overline{y} :\Longleftrightarrow y \in \overline{y} - \intt\, K$, respectively. 
Two well-known types of solutions to the vector problem \eqref{vector_problem_P} can be defined according to the next two definitions (see, e.g., \cite{BotGradWanka}, \cite{GoeRiaTamZal2023}, \cite{Jahn2011}, \cite{Khanetal2015}).

\begin{definition}[Efficiency] 
	\label{def:Eff}
	A point $\overline{x} \in \Omega$ is said to be an \textbf{efficient solution} of \eqref{vector_problem_P} if there is no $x \in \Omega$ such that $f(x) \lneq_{K} f(\overline{x})$. The set of all efficient solutions of \eqref{vector_problem_P} is denoted by
	${\rm Eff}(\Omega \mid f,K)$.	
\end{definition}

\begin{definition}[Weak efficiency]
	\label{def:wEff}
	A point $\overline{x} \in \Omega$ is said to be \textbf{a weakly efficient solution} of \eqref{vector_problem_P} if there is no $x \in \Omega$ such that $f(x) <_K f(\overline{x})$. The set of all weakly efficient solutions of \eqref{vector_problem_P} is denoted by
	${\rm WEff}(\Omega \mid f,K)$.	
\end{definition}

In addition to the efficiency concepts above, we also want to consider a proper efficiency with respect to a cone-valued map. 

\begin{definition}[Proper efficiency with respect to a cone-valued map] 
	\label{def:pEff_A}
	Consider a cone-valued map $\mathbb{A}: \Omega \to 2^Y$. A point $\overline{x} \in \Omega$ is said to be an \textbf{$\mathbb{A}$-properly efficient solution} of \eqref{vector_problem_P} if
	$\overline{x} \in {\rm Eff}(\Omega \mid f,K)$ and 
	$\mathbb{A}(\bar x) \cap (-K) = \{0\}$.
	The set of all $\mathbb{A}$-properly efficient solutions of \eqref{vector_problem_P} is denoted by ${\rm PEff}_{\mathbb{A}}(\Omega \mid f, K)$.	
\end{definition}

\begin{remark} \label{rem:PEFFvsWEffandEffwithA}
	In Definition \ref{def:pEff_A} one can remove the assumption $\overline{x} \in {\rm Eff}(\Omega \mid f,K)$ if $\mathbb{A}(\bar x)$ contains $f[\Omega] - f(\bar x)$. 
	Notice that ${\rm PEff}_{\mathbb{A}^1}(\Omega \mid f, K) = {\rm Eff}(\Omega \mid f,K)$
	and
	${\rm PEff}_{\mathbb{A}^1}(\Omega \mid f, \{0\} \cup \intt\,K) = {\rm WEff}(\Omega \mid f,K)$
	with
	$
	\mathbb{A}^1(\cdot) := \mathbb{R}_+ \cdot (f[\Omega] - f(\cdot)).
	$
	If $K$ is nontrivial, pointed and convex, then $\mathbb{A}^1(\cdot)$ can be replaced by
	$
	\mathbb{A}^2(\cdot) := \mathbb{R}_+ \cdot (f[\Omega] + K - f(\cdot)).
	$
\end{remark}

\begin{remark} \label{rem:proper_efficiency_A}
	For the particular choice of the cone-valued map $\A: \Omega \to 2^Y$, one can see that several well-known concepts of proper efficiency are special cases of Definition \ref{def:pEff_A}: Hurwicz proper efficiency (\cite{Hurwicz}, 1958), Borwein proper efficiency (\cite{Borwein1977}, 1977), Benson proper efficiency (\cite{Benson1979}, 1979), extended Borwein proper efficiency (\cite{Borwein1980}, 1980). For more details see, e.g., \cite[Sec. 2.4.3]{BotGradWanka}, \cite[Ch. 3]{Grad2015}, \cite{GuerraggioMolhoZaffaroni}, \cite[Sec. 2.4]{Khanetal2015} and references therein. 
\end{remark}

Besides the proper efficiency concept given in Definition \ref{def:pEff_A}, we would also like to consider a proper efficiency concept in the sense of Henig (\cite{Henig2}, 1982).
To formulate this concept in the next definition, consider the following family of dilating cones:
$$
\mathcal{D}(K) := \{C \subseteq Y \mid C \text{ is a nontrivial,  convex cone with } K\setminus \{0\} \subseteq \intt\, C\}.
$$

\begin{definition}[Proper efficiency in the sense of Henig]
	\label{def:pEff_Henig}
	A point $\bar x \in \Omega$ is said to be a \textbf{Henig properly efficient solution} of \eqref{vector_problem_P} if there is $C \in \mathcal{D}(K)$ such that $\bar x \in \Eff(\Omega \mid f,C)$. The set of all Henig properly efficient solutions of \eqref{vector_problem_P} is denoted by	$\PEff_{He}(\Omega \mid f,K)$.	
\end{definition}

\section{Scalarization in Vector Optimization} \label{sec:scalarization}

Under the above assumptions, consider the following scalarization of the vector optimization problem \eqref{vector_problem_P}:
\begin{equation}
	\label{scalarized_vector_problem}
	\tag{\ensuremath{P_{\varphi}}}
	\begin{cases}
		(\varphi \circ f)(x) \to \min \\
		x \in \Omega,
	\end{cases}
\end{equation} 
where $\varphi: Y \to \mathbb{R}$. Clearly, to get useful relationships between the problems \eqref{vector_problem_P} and \eqref{scalarized_vector_problem} one needs to impose certain properties on $\varphi$. By solving the scalar problem \eqref{scalarized_vector_problem}  (with a specific function $\varphi$), one can also get certain characterizations for the solutions of the original vector problem \eqref{vector_problem_P}. The application of such a strategy is called \textbf{scalarization method}, while the function $\varphi$ is called \textbf{scalarizing function} (see e.g. \cite{BotGradWanka}, \cite{GoeRiaTamZal2023}, \cite{Jahn2011}, \cite{Khanetal2015}).

In this paper, we would like to analyze the relationships between solutions of \eqref{scalarized_vector_problem} and the (weakly, properly) efficient solutions of \eqref{vector_problem_P} for the case that $\varphi$ satisfies certain monotonicity properties. For doing this, we recall monotonicity concepts for the function $\varphi$ (cf. Jahn \cite[Def. 5.1]{Jahn2011}). 

\begin{definition}[$K$- increasing functional]
	A scalarizing function $\varphi: Y \to \mathbb{R}$ is said to be
	\begin{itemize}
		\item \textbf{$K$-increasing} if 
		$\forall\, y, \overline{y} \in Y, \,y \leq_K \overline{y}:\; \varphi(y) \leq \varphi(\overline{y}).$
		\item\textbf{strictly $K$-increasing} if  
		$\forall\, y, \overline{y} \in Y, \,y <_K \overline{y}:\; \varphi(y) < \varphi(\overline{y}).$
		\item\textbf{strongly $K$-increasing} if  
		$\forall\, y, \overline{y} \in Y, \,y \lneq_K \overline{y}:\; \varphi(y) < \varphi(\overline{y}).$
	\end{itemize}
\end{definition}

As an immediate consequence, any strongly $K$-increasing function is $K$-increasing, and if $K \neq Y$, strictly $K$-increasing as well.

The following lemma collects several important relationships between cone-monotone functions (based on different cones) as well as cone-representing functions. The result will be useful to prove some basic scalarization results based on Bishop-Phelps type cone-monotone scalarizing functions in Section \ref{sec:basics_scalarization}. 

\begin{lemma} \label{lem:monotonicity_Krepresenting_fcn}
	Consider a function $\varphi: Y \to \mathbb{R}$ and two cones $K, C$ in $Y$. The following assertions hold:
	\begin{itemize}
		\item[$1^\circ$] If $\varphi$ is (strictly) $-K$ representing and subadditive, then $\varphi$ is (strictly) $K$-increasing.
		\item[$2^\circ$] If  $\intt K \subseteq \intt C$ and $\varphi$ is strictly $C$-increasing, then $\varphi$ is strictly $K$-increasing.
		\item[$3^\circ$] If $K \subseteq C$ and $\varphi$ is (strongly) $C$-increasing, then $\varphi$ is (strongly) $K$-increasing.    
	\end{itemize}
\end{lemma}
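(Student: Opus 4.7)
The plan is to verify each assertion by a direct calculation using the defining properties of the monotonicity and representation concepts. I expect all three parts to be essentially routine; the only point that requires more than a one-line inclusion chase is part $1^\circ$, where one has to combine the representing property with subadditivity.

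For $1^\circ$, I would fix $y, \bar y \in Y$ with $y \leq_K \bar y$, so that by definition $y - \bar y \in -K$. The hypothesis that $\varphi$ is $-K$ representing then yields $\varphi(y - \bar y) \leq 0$. Writing $y = \bar y + (y - \bar y)$ and applying subadditivity gives
\[
\varphi(y) \leq \varphi(\bar y) + \varphi(y - \bar y) \leq \varphi(\bar y),
\]
which is exactly $K$-monotonicity. The strict case is completely analogous: if $y <_K \bar y$, then $y - \bar y \in -\intt K$, strict $-K$ representation gives $\varphi(y - \bar y) < 0$, and the same subadditivity estimate now yields the strict inequality $\varphi(y) < \varphi(\bar y)$.

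Parts $2^\circ$ and $3^\circ$ are pure inclusion chases on the cones. For $2^\circ$, if $y <_K \bar y$, then $y \in \bar y - \intt K \subseteq \bar y - \intt C$ by hypothesis, so $y <_C \bar y$; strict $C$-monotonicity of $\varphi$ then immediately delivers $\varphi(y) < \varphi(\bar y)$. For $3^\circ$, assuming $K \subseteq C$, I would just note $-K \subseteq -C$ and (consequently) $-(K\setminus\{0\}) \subseteq -(C\setminus\{0\})$, since $0 \notin K\setminus\{0\}$. Hence $y \leq_K \bar y$ implies $y \leq_C \bar y$, and $y \lneq_K \bar y$ implies $y \lneq_C \bar y$. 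Applying $C$-monotonicity (respectively strong $C$-monotonicity) of $\varphi$ to these weaker order relations on $Y$ immediately yields the corresponding $K$-monotonicity. No obstacle arises; the only subtlety worth flagging is that the " $\setminus \{0\}$ " in the definition of $\lneq$ is preserved by the inclusion $K \subseteq C$ precisely because $0$ is excluded on both sides.
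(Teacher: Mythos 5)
Your proof is correct and follows essentially the same route as the paper: the subadditivity estimate $\varphi(y)=\varphi(\bar y+(y-\bar y))\leq\varphi(\bar y)+\varphi(y-\bar y)$ combined with the (strict) representing property for $1^\circ$, and direct inclusion chases for $2^\circ$ and $3^\circ$ (which the paper simply labels as straightforward). Your version is slightly more complete in that it writes out the non-strict case of $1^\circ$ and the details of $2^\circ$--$3^\circ$ explicitly, and it correctly notes that the empty-interior case is handled vacuously.
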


\begin{proof}
	$1^\circ$ We only prove the strict case, since the other case is similar. The case $\intt\, K = \emptyset$ is clear. Now assume that $\intt\, K \neq \emptyset$.
	Consider $y^1, y^2 \in Y$ with $y^2 <_K y^1$, i.e., $y^2 - y^1 \in -\intt\, K$. Since $\varphi$ is strictly $-K$ representing, this is equivalent to 
	$\varphi(y^2 - y^1) < 0$. Thus, we derive $\varphi(y^2) = \varphi(y^1 + y^2 - y^1) \leq  \varphi(y^1) + \varphi(y^2 - y^1) < \varphi(y^1).$    
	This shows that $\varphi$ is strictly $K$-increasing.
	The proofs of $2^\circ$ and $3^\circ$ are straightforward.
\end{proof}

The next scalarization results involving cone-monotone functions (cf. \cite[Lem. 5.6]{Khazayel2021a}, \cite[Lem. 5.2]{Khazayel2021b}) are important for deriving the result in Theorem \ref{th:argminPhiCircfisPWEff}. 

\begin{lemma} \label{lem:monotone_fcns_wpEff}
	Consider a function $\varphi: Y \to \mathbb{R}$ and a cone $K \subseteq Y$. Then, the following assertions hold:
	\begin{itemize}
		\item[$1^\circ$] If $\varphi$ is strictly $K$-increasing, then 	${\rm argmin}_{x \in \Omega} \; (\varphi \circ f)(x) \subseteq {\rm WEff}(\Omega \mid f,K)$. 
		\item[$2^\circ$] If $\varphi$ is strongly $K$-increasing, then 	${\rm argmin}_{x \in \Omega} \; (\varphi \circ f)(x) \subseteq {\rm Eff}(\Omega \mid f,K)$.
		\item[$3^\circ$] If $\varphi$ is $K$-increasing, and 
		${\rm argmin}_{x \in \Omega}\; (\varphi \circ f)(x) = \{\overline{x}\}$ for some $\overline{x} \in \Omega$, then $\overline{x}\in  {\rm Eff}(\Omega \mid f,K)$.	
		\item[$4^\circ$] If $\varphi$ is strictly $C$-increasing or strongly $C$-increasing for some $C \in \mathcal{D}(K)$, then 	
		${\rm argmin}_{x \in \Omega} \; (\varphi \circ f)(x) \subseteq {\rm PEff}_{He}(\Omega \mid f,K)$.	
	\end{itemize}
\end{lemma}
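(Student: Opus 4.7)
The plan is to prove all four assertions by simple contradiction arguments, leveraging the monotonicity hypotheses against the assumed optimality. Throughout, I take an arbitrary $\overline{x} \in {\rm argmin}_{x \in \Omega}(\varphi \circ f)(x)$ and derive contradictions from assumed violations of the various efficiency concepts.

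For the first assertion, I would suppose $\overline{x} \notin {\rm WEff}(\Omega \mid f,K)$, producing $x \in \Omega$ with $f(x) <_K f(\overline{x})$. Strict $K$-monotonicity then gives $\varphi(f(x)) < \varphi(f(\overline{x}))$, contradicting $\overline{x}$ being a minimizer of $\varphi \circ f$. The second assertion is entirely analogous, using $\lneq_K$ in place of $<_K$ and invoking the strong $K$-increasing property. For the third assertion, given uniqueness of the minimizer, I would again assume some $x \in \Omega$ satisfies $f(x) \lneq_K f(\overline{x})$; plain $K$-monotonicity yields $\varphi(f(x)) \leq \varphi(f(\overline{x}))$, and combining this with the minimality of $\overline{x}$ gives equality, so $x$ is also a minimizer. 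The singleton argmin assumption forces $x = \overline{x}$, so $f(\overline{x}) \lneq_K f(\overline{x})$, i.e., $0 \in K \setminus \{0\}$, the required contradiction.

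The fourth assertion will require slightly more care, and is the main obstacle. I would split into the two monotonicity subcases. In the strongly $C$-increasing subcase, assertion $2^\circ$ applied with $C$ in place of $K$ immediately gives argmin $\subseteq {\rm Eff}(\Omega \mid f, C)$, which is contained in ${\rm PEff}_{He}(\Omega \mid f, K)$ by Definition \ref{def:pEff_Henig}, since $C \in \mathcal{D}(K)$. In the strictly $C$-increasing subcase, assertion $1^\circ$ (applied with $C$) yields argmin $\subseteq {\rm WEff}(\Omega \mid f, C)$. Here I would then use the set-theoretic identity
\[
{\rm WEff}(\Omega \mid f, C) = {\rm Eff}(\Omega \mid f, \{0\} \cup \intt\,C),
\]
which follows by comparing definitions: both sets are characterized by the nonexistence of $x \in \Omega$ with $f(\overline{x}) - f(x) \in \intt\,C$. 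It remains to observe that $\{0\} \cup \intt\,C$ is itself a dilating cone for $K$: it is a nontrivial convex cone (since $\intt\,C$ is a convex subcone of the convex cone $C$, closed under addition and positive scalar multiplication, and nonempty because $K \setminus \{0\} \subseteq \intt\,C \neq \emptyset$), and moreover $K \setminus \{0\} \subseteq \intt\,C = \intt(\{0\} \cup \intt\,C)$, so $\{0\} \cup \intt\,C \in \mathcal{D}(K)$. Hence the argmin is contained in ${\rm PEff}_{He}(\Omega \mid f, K)$, again by Definition \ref{def:pEff_Henig}.

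The only subtlety I anticipate lies in justifying that $\{0\} \cup \intt\,C$ is convex and nontrivial in the general topological-linear setting, but since $\intt\,C$ is nonempty (forced by $K \setminus \{0\} \subseteq \intt\,C$ and the nontriviality of $K$) and $\intt\,C + \intt\,C \subseteq \intt\,C$ for the convex cone $C$, both properties follow without further assumptions. No topological or completeness hypothesis on $Y$ is needed in the argument.
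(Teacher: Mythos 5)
Your proof is correct. The paper itself omits the proof of this lemma (referring instead to analogous results of Khazayel et al.\ in an algebraic-interior setting), and your contradiction arguments for $1^\circ$--$3^\circ$ together with the reduction of $4^\circ$ to $1^\circ$/$2^\circ$ via the identity ${\rm WEff}(\Omega \mid f,C)={\rm Eff}(\Omega \mid f,\{0\}\cup\intt C)$ and the observation that $\{0\}\cup\intt C\in\mathcal{D}(K)$ is exactly the standard argument those references carry out. The only caveat, which you essentially flag yourself, is that the nontriviality of $\{0\}\cup\intt C$ (equivalently $\intt C\neq\emptyset$) is forced by $K\setminus\{0\}\subseteq\intt C$ only when $K\neq\{0\}$; that nontriviality of $K$ is the standing assumption in all of the paper's applications of $4^\circ$, and note also that $(\{0\}\cup\intt C)\setminus\{0\}=\intt C$ because $0\notin\intt C$ follows from $C\neq Y$.
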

We omit the proof because it is similar to the proofs for \cite[Lem. 5.6]{Khazayel2021a} and \cite[Lem. 5.2]{Khazayel2021b}, which were established for convex cones in real linear spaces in an algebraic framework.

For a (strictly) $-K$ representing function $\varphi: Y \to \mathbb{R}$ with $\varphi(0) = 0$ we have the following scalarization result which will be used to prove some important characterizations in Propositions \ref{prop:scal_result_Eff_BP} and \ref{prop:scal_result_WEff_BP}.

\begin{lemma}\label{lem:result_scalaization_Jahn1}
	Suppose that $K \subseteq Y$ is a cone. Assume that $\varphi: Y \to \mathbb{R}$ is a function with $\varphi(0) = 0$. Then, for any $\bar x \in \Omega$, we have:
	\begin{itemize}
		\item[$1^\circ$] If $\varphi$ is $-K$ representing, then \\
		$\bar x \in {\rm Eff}(\Omega \mid f,K) \iff
		\{x \in \Omega \mid f(x) = f(\bar x)\} = {\rm argmin}_{x \in \Omega}\; \varphi(f(x) - f(\bar x)).
		$
		\item[$2^\circ$] If $\varphi$ is strictly $-K$ representing, then \\
		$\bar x \in {\rm WEff}(\Omega \mid f,K) \iff \bar x \in {\rm argmin}_{x \in \Omega}\; \varphi(f(x) - f(\bar x)).$
	\end{itemize}
\end{lemma}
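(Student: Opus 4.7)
Proof plan. The two assertions have the same underlying mechanism: translate the vector optimality condition at $\bar x$ into a statement about the image of $f(\cdot) - f(\bar x)$ under $\varphi$, using that $\varphi(0)=0$ pins the candidate minimum value to $0$. In both cases I plan to prove the two implications separately, arguing one direction directly and the other by contraposition.

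For assertion $1^\circ$, I would first observe that, since $\varphi(0)=0$, every $x\in\Omega$ with $f(x)=f(\bar x)$ produces value $0$ for the functional $x\mapsto \varphi(f(x)-f(\bar x))$. For the forward implication, assume $\bar x\in{\rm Eff}(\Omega\mid f,K)$. By definition no $x\in\Omega$ satisfies $f(x)-f(\bar x)\in -K\setminus\{0\}$, and the $-K$ representing property of $\varphi$ translates this into: $\varphi(f(x)-f(\bar x))\leq 0$ implies $f(x)=f(\bar x)$. Combined with $\varphi(f(x)-f(\bar x))>0$ whenever $f(x)-f(\bar x)\notin -K$, this forces the minimum to equal $0$ and its argmin to coincide with $\{x\in\Omega\mid f(x)=f(\bar x)\}$. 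For the converse, assume the set equality holds; then the minimum value is $\varphi(0)=0$. If $\bar x$ were not efficient, some $x\in\Omega$ would satisfy $f(x)-f(\bar x)\in -K\setminus\{0\}$, so by the representing property $\varphi(f(x)-f(\bar x))\leq 0=\min$, placing $x$ in the argmin but with $f(x)\neq f(\bar x)$, contradicting the assumed equality.

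For assertion $2^\circ$, the reasoning is shorter and uses the stronger \emph{strictly} $-K$ representing property, i.e., $-\intt\,K=\{y\in Y\mid\varphi(y)<0\}$ in addition to $-K=\{y\in Y\mid\varphi(y)\leq 0\}$. For the forward implication, weak efficiency of $\bar x$ means no $x\in\Omega$ yields $f(x)-f(\bar x)\in -\intt\,K$; by the strict representing property this means $\varphi(f(x)-f(\bar x))\geq 0=\varphi(0)$ for all $x\in\Omega$, so $\bar x$ is a minimizer. For the converse, if $\bar x\in{\rm argmin}$ but $\bar x\notin{\rm WEff}$, then some $x\in\Omega$ lies with $f(x)-f(\bar x)\in -\intt\,K$, whence $\varphi(f(x)-f(\bar x))<0=\varphi(0)$, contradicting minimality at $\bar x$.

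I do not expect a real obstacle here: the key moves are the normalization $\varphi(0)=0$ together with the one-line translations guaranteed by the (strict) $-K$ representing property, so the proof reduces to bookkeeping of the two cases $f(x)=f(\bar x)$ vs.\ $f(x)\neq f(\bar x)$ in $1^\circ$, and to a direct contraposition in $2^\circ$. No convexity, topology, or continuity assumptions on $\varphi$ or $K$ are needed beyond what is already encoded in the representing hypotheses.
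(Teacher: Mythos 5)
Your proof is correct and is essentially the argument the paper intends: the paper omits the proof, noting only that it is "similar to Jahn [Th. 4.1]", and your normalization-via-$\varphi(0)=0$ plus the direct translation through the (strict) $-K$ representing property is exactly that standard argument. The one point worth making explicit in $1^\circ$ (which you handle implicitly) is that an $x$ with $f(x)\neq f(\bar x)$ and $\varphi(f(x)-f(\bar x))=0$ would already violate efficiency, since $\varphi(f(x)-f(\bar x))\leq 0$ places $f(x)-f(\bar x)$ in $-K\setminus\{0\}$.
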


\begin{remark}
    Note that Jahn \cite[Th. 4.1]{Jahn2023} stated a similar result as in $1^\circ$ of Lemma \ref{lem:result_scalaization_Jahn1} for the case of a pointed convex cone $K$ in a real linear space $Y$. The proof of Lemma \ref{lem:result_scalaization_Jahn1} is similar to \cite[Th. 4.1]{Jahn2023}.
\end{remark}

Next, we recall two well-known nonlinear scalarization methods in vector optimization which are due to Pascoletti and Serafini \cite{PascolettiSerafini1984}, and Gerstewitz \cite{Gerstewitz1983, Gerstewitz1984}.
For a-priori given parameters $a\in Y$ and $k \in Y \setminus \{0\}$, in \cite{PascolettiSerafini1984} the following scalar optimization problem is used
\begin{equation}
	\label{scalar_problem_Pascoletti_Serafini}
	\begin{cases}
		\lambda \to \min \\
		f(x) \in \lambda k + (a - K),\\
		(x, \lambda) \in \Omega \times \mathbb{R},
	\end{cases}
\end{equation}
while in \cite{Gerstewitz1983,Gerstewitz1984} the scalar optimization problem is given as
\begin{equation}
	\label{scalar_problem_Gerstewitz}
	\begin{cases}
		\varphi^G_{a-K, k}(f(x)) \to \min \\
		x\in \Omega,
	\end{cases}
\end{equation}
where the nonlinear scalarizing function
$\varphi^G_{a-K, k}: Y \to \overline{\mathbb{R}} := \mathbb{R} \cup \{-\infty, +\infty\}$ is defined (for $k \in K \setminus \ell(K)$, and $K$ closed and convex) by
\begin{equation}
	\label{eq:Gerstewitz_fcn}
	\varphi^G_{a-K, k}(y) := \inf\{t \in \mathbb{R}\mid y \in t k + (a - K)\} \quad \text{for all }y \in Y.
\end{equation}
Many facts about the problems \eqref{scalar_problem_Pascoletti_Serafini} and \eqref{scalar_problem_Gerstewitz}, as well as about the nonlinear scalarizing function
$\varphi^G_{a-K, k}$, are known (see, e.g., \cite{Gerstewitz1983, Gerstewitz1984}, \cite{PascolettiSerafini1984}, \cite{GerthWeidner}, \cite{Eichfelder2008, Eichfelder2009}, \cite{Khanetal2015}, \cite{TammerWeidner2020}, \cite{GoeRiaTamZal2023}).  

\begin{lemma} \label{lem:equivalencePSproblemAndGproblem}
	Suppose that $K \subseteq Y$ is a closed, convex cone and take $k \in K \setminus \ell(K)$. Then, the scalar problems \eqref{scalar_problem_Pascoletti_Serafini}  and \eqref{scalar_problem_Gerstewitz} are equivalent (i.e., each
	problem has an optimal solution if and only if the other one has an optimal
	solution, and the optimal solutions $x$ as well as the optimal value of both
	problems coincide).
\end{lemma}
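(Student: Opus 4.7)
My plan is to prove this by identifying the Gerstewitz objective at $x$ with the inner infimum (in $\lambda$) of the Pascoletti--Serafini objective at fixed $x$. Directly from the definition in \eqref{eq:Gerstewitz_fcn}, for every $x\in\Omega$,
\[
\varphi^G_{a-K,k}(f(x)) \;=\; \inf\{\lambda\in\mathbb{R}\mid f(x)\in \lambda k+(a-K)\},
\]
so decomposing $\inf_{(x,\lambda)\text{ feasible}}\lambda = \inf_{x\in\Omega}\inf\{\lambda\mid f(x)\in\lambda k+(a-K)\}$ already equates the two optimal values. What remains is to transfer optimal solutions in both directions, and for this I need attainment of the infimum defining $\varphi^G_{a-K,k}(f(x))$ whenever it is finite.

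The key structural fact I would establish first is that, for each fixed $x\in\Omega$, the set $L(x) := \{\lambda\in\mathbb{R}\mid f(x)\in \lambda k+(a-K)\}$ is a closed upper half-line. Upward-closedness uses $k\in K$: if $\lambda\in L(x)$ and $\mu\ge 0$, then $(\lambda+\mu)k + (a-f(x)) = \lambda k + (a-f(x)) + \mu k\in K+K = K$. Closedness uses that $K$ is closed: if $\lambda_n\in L(x)$ and $\lambda_n\to\lambda^\ast$, then $\lambda_n k - f(x)+a\in K$ converges to $\lambda^\ast k - f(x)+a\in K$. Hence whenever $\varphi^G_{a-K,k}(f(x))$ is finite, $L(x) = [\varphi^G_{a-K,k}(f(x)),\infty)$ and the infimum is attained. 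The assumption $k\notin \ell(K)$ is what prevents the degenerate situation in which $L(x)$ is either all of $\mathbb{R}$ or empty (since $k\in\ell(K)$ would force $\pm k\in K$, so $\lambda k$ could be absorbed into $-K$ for all $\lambda$).

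With this attainment in hand, the equivalence is immediate. If $(\bar x,\bar\lambda)$ is optimal for \eqref{scalar_problem_Pascoletti_Serafini}, then $\bar\lambda\in L(\bar x)$ forces $\bar\lambda\ge \varphi^G_{a-K,k}(f(\bar x))$, and any strict inequality would yield a strictly smaller feasible $\lambda$, contradicting optimality; hence $\bar\lambda=\varphi^G_{a-K,k}(f(\bar x))$, and the optimality of $\bar\lambda$ among all feasible pairs implies $\bar x$ minimizes $\varphi^G_{a-K,k}\circ f$ on $\Omega$. Conversely, if $\bar x$ is optimal for \eqref{scalar_problem_Gerstewitz}, then $\varphi^G_{a-K,k}(f(\bar x))$ must be finite (otherwise \eqref{scalar_problem_Gerstewitz} has no minimizer), so by the attainment above, $\bar\lambda := \varphi^G_{a-K,k}(f(\bar x))$ satisfies $(\bar x,\bar\lambda)\in\Omega\times\mathbb{R}$ with $f(\bar x)\in\bar\lambda k+(a-K)$, and the identity of the two optimal values makes $(\bar x,\bar\lambda)$ optimal for \eqref{scalar_problem_Pascoletti_Serafini}.

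The only genuinely substantive step is the attainment argument, and it is where both hypotheses enter essentially: closedness of $K$ gives closedness of $L(x)$, while $k\in K\setminus\ell(K)$ ensures $L(x)$ is a proper upper half-line rather than a degenerate set. Everything else is a bookkeeping interchange of infima.
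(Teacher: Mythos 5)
The paper itself does not prove this lemma; it simply cites \cite[Th. 5.1.3]{TammerWeidner2020}. Your argument is the standard direct proof of that fact, and its structure is sound: identifying $\varphi^G_{a-K,k}(f(x))$ with $\inf L(x)$, showing $L(x)$ is closed and upward-closed (using $K+K=K$ for a convex cone and the closedness of $K$), and then transferring optimal solutions via attainment of the inner infimum. The one place that needs tightening is your justification that $k\notin\ell(K)$ excludes $L(x)=\mathbb{R}$: what you actually argue is the converse implication (that $k\in\ell(K)$ \emph{would} force $L(x)=\mathbb{R}$ whenever nonempty), which does not by itself rule out the degenerate case under the stated hypothesis. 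The correct argument is a normalization/limit step: if $\pm n\,k+(a-f(x))\in K$ for all $n$, then dividing by $n$ and using that $K$ is a closed cone gives $\pm k\in K$, i.e.\ $k\in\ell(K)$, a contradiction. This step is genuinely needed, since without it $\varphi^G_{a-K,k}(f(x))$ could a priori equal $-\infty$ and the equivalence of minimizers would break down; note also that for the attainment claim itself, finiteness of $\inf L(x)$ already suffices together with closedness and upward-closedness, so the role of $k\notin\ell(K)$ is precisely to guarantee that finiteness. With that limiting argument inserted, your proof is complete and self-contained, which is arguably more informative than the paper's bare citation.
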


\begin{proof}
    This fact is well-known (see \cite[Th. 5.1.3]{TammerWeidner2020}).
\end{proof}

In the next lemma, we state classical relationships between the vector problem \eqref{vector_problem_P} and the scalar problem \eqref{scalar_problem_Pascoletti_Serafini} (see, e.g., Eichfelder \cite[Th. 2.1 and Cor. 2.9]{Eichfelder2008} for the finite dimensional case).
\begin{lemma} \label{lem:scalarization_Gerstewitz_Pascoletti_Serafini}
	Suppose that $K \subseteq Y$ is a convex cone. Then, we have:
	\begin{itemize}    
		\item[$1^\circ$] If $\bar x \in {\rm Eff}(\Omega \mid f,K)$, then $(\bar x, 0)$ solves the problem \eqref{scalar_problem_Pascoletti_Serafini} for $a = f(\bar x)$ and any $k \in K \setminus \ell(K)$. More precisely, $\{(x, 0) \mid x \in \Omega, f(x) = f(\bar x)\}$ is the set of solutions of  the problem \eqref{scalar_problem_Pascoletti_Serafini}.    
		\item[$2^\circ$] If $\bar x \in {\rm WEff}(\Omega \mid f,K)$, then $(\bar x, 0)$ solves the problem \eqref{scalar_problem_Pascoletti_Serafini} for $a = f(\bar x)$ and any $k \in \intt\,K$.
		\item[$3^\circ$] Consider $(\bar x, \bar \lambda) \in \Omega \times \R$. If $\{(x, \bar \lambda) \mid x \in \Omega, f(x) = f(\bar x)\}$ is the set of solutions of the problem \eqref{scalar_problem_Pascoletti_Serafini} for some $a \in Y$ and $k \in Y \setminus\{0\}$, then $\bar x \in {\rm Eff}(\Omega \mid f,K)$.
		\item[$4^\circ$] If $(\bar x, \bar \lambda) \in \Omega \times \R$ solves the problem \eqref{scalar_problem_Pascoletti_Serafini} for some $a \in Y$ and $k \in Y \setminus\{0\}$, then $\bar x \in {\rm WEff}(\Omega \mid f,K)$ and $f(\bar x) \in \bar\lambda k + (a - \bd\,K)$.
	\end{itemize}
\end{lemma}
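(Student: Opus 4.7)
All four parts compare domination in $K$ with feasibility of the Pascoletti--Serafini problem \eqref{scalar_problem_Pascoletti_Serafini}. The plan is to express the feasibility condition of $(x,\lambda)$ in the canonical form $f(x) = \lambda k + a - c$ with $c \in K$, and then either perturb the scalar $\lambda$ or substitute a dominating point for $x$. The algebraic workhorses are the identities $K + (K \setminus \ell(K)) = K \setminus \ell(K)$ and $K + \intt\,K = \intt\,K$ recorded in Section~\ref{sec:preliminaries}, combined with openness of $\intt\,K$ in $Y$.

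\textbf{Parts $1^\circ$ and $2^\circ$.} With $a = f(\bar x)$ the pair $(\bar x, 0)$ is evidently feasible. I would then rule out any feasible $(x, \lambda)$ with $\lambda < 0$: writing $f(\bar x) - f(x) = -\lambda k + c$ with $c \in K$, the positivity $-\lambda > 0$ together with $k \in K \setminus \ell(K)$ forces $-\lambda k \in K \setminus \ell(K)$ (else scaling by $(-\lambda)^{-1}$ would pull $k$ into $\ell(K)$), so by $K + (K \setminus \ell(K)) = K \setminus \ell(K) \subseteq K \setminus \{0\}$ one obtains $f(x) \lneq_K f(\bar x)$, contradicting $\bar x \in \Eff(\Omega \mid f, K)$. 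The same argument with $k \in \intt\,K$ and the identity $K + \intt\,K = \intt\,K$ gives $2^\circ$. The characterization of the full solution set in $1^\circ$ then follows because any feasible $(x, 0)$ satisfies $f(\bar x) - f(x) \in K$, so $f(x) \neq f(\bar x)$ would again violate efficiency, while the converse direction is immediate.

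\textbf{Parts $3^\circ$ and $4^\circ$.} Both are contradiction arguments starting from $f(\bar x) = \bar\lambda k + a - c$ with $c \in K$. For $3^\circ$, if some $x \in \Omega$ were to satisfy $f(\bar x) - f(x) \in K \setminus \{0\}$, then $f(x) = \bar\lambda k + a - \bigl(c + (f(\bar x) - f(x))\bigr)$ with $c + (f(\bar x) - f(x)) \in K + K = K$; hence $(x, \bar\lambda)$ would be a feasible, optimal pair with $f(x) \neq f(\bar x)$, lying outside the prescribed solution set, a contradiction. For the weak-efficiency part of $4^\circ$, given $x \in \Omega$ with $f(\bar x) - f(x) \in \intt\,K$, the element $p := c + (f(\bar x) - f(x))$ lies in $K + \intt\,K = \intt\,K$; openness of $\intt\,K$ at $p$ together with continuity of scalar multiplication permits subtracting $\epsilon k$ while staying in $\intt\,K \subseteq K$ for small $\epsilon > 0$, which yields a feasible $(x, \bar\lambda - \epsilon)$ with strictly smaller value, contradicting optimality of $\bar\lambda$. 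The boundary claim $f(\bar x) \in \bar\lambda k + (a - \bd\,K)$ is the symmetric perturbation: if $c \in \intt\,K$, openness gives $c - \epsilon k \in \intt\,K \subseteq K$ for small $\epsilon > 0$, making $(\bar x, \bar\lambda - \epsilon)$ feasible and contradicting optimality; hence $c \in K \setminus \intt\,K \subseteq \bd\,K$.

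\textbf{Main obstacle.} The manipulations are largely routine once the correct perturbation is identified; the principal care lies in distinguishing the cone-sum identities for $K \setminus \ell(K)$ versus $\intt\,K$, and in invoking the openness of $\intt\,K$ at the right moment in $4^\circ$ (both for weak efficiency and for the boundary conclusion, which turn on the same open-neighbourhood argument applied to different base points). A borderline case worth flagging is $\intt\,K = \emptyset$: the hypothesis $k \in \intt\,K$ in $2^\circ$ becomes vacuous, and the conclusions of $4^\circ$ become trivial since then $\WEff(\Omega \mid f, K) = \Omega$ and $\bd\,K = \cl\,K$, so no additional argument is required there.
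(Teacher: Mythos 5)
Your proof is correct. The paper itself does not prove this lemma — it is stated as a classical result with a pointer to Eichfelder's work for the finite-dimensional case — and your argument is precisely the standard one: it decomposes feasibility as $f(x)=\lambda k + a - c$ with $c\in K$, uses the identities $K+(K\setminus\ell(K))=K\setminus\ell(K)$ and $K+\intt\,K=\intt\,K$ from the preliminaries to exclude feasible points with $\lambda<0$ in $1^\circ$ and $2^\circ$, and uses the substitution/perturbation arguments (including the openness of $\intt\,K$ for the boundary claim) for $3^\circ$ and $4^\circ$, with the degenerate cases $\intt\,K=\emptyset$ and $K=\ell(K)$ correctly flagged as vacuous or trivial.
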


The scalar optimization problems of type \eqref{scalar_problem_Pascoletti_Serafini}  and \eqref{scalar_problem_Gerstewitz} involved in the classical scalarization results (see, e.g., Eichfelder \cite[Th. 2.1 and Cor. 2.9]{Eichfelder2008}) have the drawback that the parameter $a$ depends on the solution $\bar x$ (more precisely, we have $a = f(\bar x)$). In order to avoid this disadvantage, Jahn \cite[Th. 4.2]{Jahn2023} stated a scalarization result where the parameter $a$ does not depend on the solution $\bar x$. 

In the following proposition, we present a new scalarization result that contains the result of Jahn \cite[Th. 4.2]{Jahn2023} as a special case (see Remark \ref{rem:Jahn_result} for details).

\begin{proposition}
	\label{prop:result_scalarization_Jahn_2_new}
	Suppose that $K \subseteq Y$ is a convex cone, and take sets $\mathcal{P} \subseteq Y$, $\mathcal{T} \subseteq K\setminus \ell(K)$ and $\mathcal{R} \subseteq \mathbb{R}$ such that $f[{\rm Eff}(\Omega \mid f,K)] \subseteq \mathcal{P} + \mathcal{R} \cdot \mathcal{T}$.  Then, for any $\bar x \in \Omega$, the following assertions are equivalent:
	\begin{itemize}
		\item[$1^\circ$] $\bar x \in {\rm Eff}(\Omega \mid f,K)$. 
		\item[$2^\circ$] There are some $a \in \mathcal{P}$, $k \in \mathcal{T}$ and $s \in \mathcal{R}$ so that $\{(x, s)\mid x \in \Omega, f(x) = f(\bar x)\}$ is the set of solutions of \eqref{scalar_problem_Pascoletti_Serafini}.
	\end{itemize} 
	If, in addition, $K$ is closed, then $1^\circ$ and $2^\circ$ are equivalent to:
	\begin{itemize} 
		\item[$3^\circ$] There are some $a \in \mathcal{P}$, $k \in \mathcal{T}$ and $s \in \mathcal{R}$ so that $\varphi^G_{a - K, k}(f(\bar x)) = s$ and 
		$
		\{x \in \Omega \mid f(x) = f(\bar x)\} = {\rm argmin}_{x \in \Omega}\; \varphi^G_{a - K, k}(f(x)).
		$
	\end{itemize}   
\end{proposition}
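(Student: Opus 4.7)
The plan is to establish the equivalence $1^\circ \Leftrightarrow 2^\circ$ directly, and then obtain $2^\circ \Leftrightarrow 3^\circ$ under the closedness assumption by invoking Lemma \ref{lem:equivalencePSproblemAndGproblem}. The implication $2^\circ \Rightarrow 1^\circ$ is essentially already recorded in Lemma \ref{lem:scalarization_Gerstewitz_Pascoletti_Serafini}$(3^\circ)$ (once one notes that $k \in \mathcal{T} \subseteq K \setminus \ell(K) \subseteq Y \setminus \{0\}$), so the substantive work sits in $1^\circ \Rightarrow 2^\circ$.

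For $1^\circ \Rightarrow 2^\circ$, I would start with $\bar x \in {\rm Eff}(\Omega \mid f,K)$ and use the covering hypothesis $f[{\rm Eff}(\Omega \mid f,K)] \subseteq \mathcal{P} + \mathcal{R}\cdot\mathcal{T}$ to pick $(a,k,s) \in \mathcal{P} \times \mathcal{T} \times \mathcal{R}$ with $f(\bar x) = a + sk$. Then $(\bar x, s)$ is trivially feasible for \eqref{scalar_problem_Pascoletti_Serafini}. To prove optimality together with the sharp description of the optimal set, suppose $(x', \lambda')$ is feasible, say $f(x') = a + \lambda' k - c$ with $c \in K$. Then
\[
f(\bar x) - f(x') = (s - \lambda')\,k + c.
\]
If $\lambda' < s$, then $(s - \lambda')k \in K \setminus \ell(K)$ (since $k \in K \setminus \ell(K)$ and scaling by a positive real cannot move $k$ into $\ell(K)$), and the preliminaries record $K + (K \setminus \ell(K)) = K \setminus \ell(K)$, hence $f(\bar x) - f(x') \in K \setminus \ell(K) \subseteq K \setminus \{0\}$, i.e.\ $f(x') \lneq_K f(\bar x)$. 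This contradicts $\bar x \in {\rm Eff}(\Omega \mid f,K)$, so $\lambda' \geq s$ and $(\bar x, s)$ is optimal. For an arbitrary optimal $(x', s)$, the same identity with $\lambda' = s$ gives $f(\bar x) - f(x') = c \in K$, so efficiency forces $f(x') = f(\bar x)$. Conversely, any $x \in \Omega$ with $f(x) = f(\bar x)$ yields an optimal pair $(x, s)$, so the optimal set is exactly $\{(x, s) \mid x \in \Omega,\, f(x) = f(\bar x)\}$.

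For $2^\circ \Leftrightarrow 3^\circ$ under the closedness assumption, I would simply translate: Lemma \ref{lem:equivalencePSproblemAndGproblem} applies because $k \in \mathcal{T} \subseteq K \setminus \ell(K)$, and asserts that \eqref{scalar_problem_Pascoletti_Serafini} and \eqref{scalar_problem_Gerstewitz} share the same optimal value and set of optimal $x$-components. So the description of the optimal set in $2^\circ$ is equivalent to the description of ${\rm argmin}_{x \in \Omega}\; \varphi^G_{a-K,k}(f(x))$ in $3^\circ$, while the value $s$ coincides with $\varphi^G_{a-K,k}(f(\bar x))$.

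The main obstacle is the non-pointed case: one must not lose the strict inequality in the ordering when moving from $\lambda' < s$ to $f(x') \lneq_K f(\bar x)$. This is exactly why the hypothesis insists on $\mathcal{T} \subseteq K \setminus \ell(K)$ rather than merely $\mathcal{T} \subseteq K \setminus \{0\}$; together with the identity $K + (K \setminus \ell(K)) = K \setminus \ell(K)$ from the preliminaries, this restriction prevents the sum $(s-\lambda')k + c$ from sliding into $\ell(K)$ (where efficiency could not be violated). Everything else is bookkeeping around feasibility of the scalar problems.
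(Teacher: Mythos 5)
Your proof is correct, and its overall architecture matches the paper's: $2^\circ \Rightarrow 1^\circ$ via Lemma \ref{lem:scalarization_Gerstewitz_Pascoletti_Serafini} ($3^\circ$), the covering hypothesis supplying the decomposition $f(\bar x) = a + sk$, and Lemma \ref{lem:equivalencePSproblemAndGproblem} for $2^\circ \Leftrightarrow 3^\circ$. The one genuine difference is in $1^\circ \Rightarrow 2^\circ$: the paper first invokes Lemma \ref{lem:scalarization_Gerstewitz_Pascoletti_Serafini} ($1^\circ$) to get the optimal set of \eqref{scalar_problem_Pascoletti_Serafini} for the reference point $a = f(\bar x)$ and level $0$, and then observes the translation invariance that $(x,0)$ solves the problem with reference point $f(\bar x)$ if and only if $(x,t)$ solves it with reference point $f(\bar x) - tk$; you instead verify optimality and the exact description of the optimal set directly from the feasibility decomposition $f(\bar x) - f(x') = (s-\lambda')k + c$, using that $(s-\lambda')k \in K\setminus\ell(K)$ for $\lambda' < s$ and the identity $K + (K\setminus\ell(K)) = K\setminus\ell(K)$ from the preliminaries. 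In effect you re-prove Lemma \ref{lem:scalarization_Gerstewitz_Pascoletti_Serafini} ($1^\circ$) inline with the shift already built in; this makes your argument more self-contained (the paper states that lemma without proof, citing Eichfelder for the finite-dimensional case), at the cost of not reusing the stated auxiliary result. Both routes handle the non-pointed case correctly for exactly the reason you identify.
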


\begin{proof} 
	The implication $2^\circ \Longrightarrow 1^\circ$ is a direct consequence of Lemma \ref{lem:scalarization_Gerstewitz_Pascoletti_Serafini} ($3^\circ$).
	
	Now, we show the implication $1^\circ \Longrightarrow 2^\circ$. Let $\bar x \in {\rm Eff}(\Omega \mid f,K)$.
	Take any $k \in \mathcal{T} \,(\subseteq K \setminus \ell(K))$. By Lemma \ref{lem:scalarization_Gerstewitz_Pascoletti_Serafini} ($1^\circ$), the set of solutions of the problem \eqref{scalar_problem_Pascoletti_Serafini} (with $f(\bar x)$ in the role of $a$) is given by $\{(x, 0) \mid x \in \Omega, f(x) = f(\bar x)\}$. Note that, for any $x \in \Omega$, the pair ($x, 0$) solves \eqref{scalar_problem_Pascoletti_Serafini} (with $f(\bar x)$ in the role of $a$) if and only if ($x, t$) solves \eqref{scalar_problem_Pascoletti_Serafini} with $f(\bar x) - t k$ in the role of $a$ for all $t \in \mathbb{R}$. 
	By our assumption $f[{\rm Eff}(\Omega \mid f,K)] \subseteq \mathcal{P} + \mathcal{R} \cdot \mathcal{T}$, there are $\bar a \in \mathcal{P}$, $\bar{k} \in \mathcal{T}$ and $s \in \mathcal{R}$ such that $f(\bar x) = \bar a + s \bar{k}$. Thus, $\{(x, s) \mid x \in \Omega, f(x) = f(\bar x)\}$ is the set of solutions of the problem \eqref{scalar_problem_Pascoletti_Serafini} with $\bar a = f(\bar x) - s \bar{k} \in \mathcal{P}$ and $\bar{k} \in \mathcal{T}$ in the role of $a$ and $k$.
	
	If $K$ is closed, the equivalence of $2^\circ$ and $3^\circ$ follows immediately from Lemma \ref{lem:equivalencePSproblemAndGproblem}.
\end{proof}

\begin{remark} \label{rem:Jahn_result}
	In Proposition \ref{prop:result_scalarization_Jahn_2_new}, one may take 
	$\mathcal{R} \in \{\mathbb{R}, \mathbb{R}_+, -\mathbb{R}_+, \mathbb{P}, -\mathbb{P}, \{1\}, \{0\},$ $\{-1\} \}$.
	If $\varphi$ is a $-K$ representing function, then the problem  \eqref{scalar_problem_Pascoletti_Serafini} can be equivalently stated by replacing the relation $f(x) \in \lambda k + (a - K)$ by $\varphi(f(x) - a - \lambda k) \leq 0$, as done by Jahn \cite{Jahn2023}. 
	From Proposition \ref{prop:result_scalarization_Jahn_2_new}, considering a pointed, convex cone $K \subseteq Y$, a singleton set $\mathcal{P}$,  $\mathcal{T} := K \setminus \{0\}$ and $\mathcal{R} := \{1\}$, we recover the result in Jahn \cite[Th. 4.2]{Jahn2023} (which provides the equivalence of $1^\circ$ and $2^\circ$).   
	In this setting, one needs the condition $f[{\rm Eff}(\Omega \mid f,K)] \subseteq a + K \setminus \{0\}$ for some $a \in Y$,
	which means that $f[{\rm Eff}(\Omega \mid f,K)]$ is strongly $K$-bounded from below (i.e., $a \lneq_K f(x)$ for all $x \in {\rm Eff}(\Omega \mid f,K)$).
	To give just one example, in the case of $Y = \mathbb{R}^n$ and $K$ as the standard ordering cone, one could take the so-called ideal point of \eqref{vector_problem_P}, namely $y^I := (\inf_{x \in \Omega}\, f_1(x), \ldots, \inf_{x \in \Omega}\, f_n(x))$, as the  point $a$ (if $y^I \in \mathbb{R}^n  \setminus f[\Omega]$). 
\end{remark}

For the concept of weak efficiency we have the following new result.

\begin{proposition}
	\label{prop:result_scalarization_Jahn_3_new}
	Suppose that $K \subseteq Y$ is a  convex cone, and take sets $\mathcal{P} \subseteq Y$, $\mathcal{T} \subseteq  \intt\,K$ and $\mathcal{R} \subseteq \mathbb{R}$ such that $f[{\rm WEff}(\Omega \mid f,K)] \subseteq \mathcal{P} + \mathcal{R} \cdot \mathcal{T}$. Then, for any $\bar x \in \Omega$, the following assertions are equivalent:
	\begin{itemize}
		\item[$1^\circ$] $\bar x \in {\rm WEff}(\Omega \mid f,K)$. 
		\item[$2^\circ$] There are some $a \in \mathcal{P}$, $k \in \mathcal{T}$ and $s \in \mathcal{R}$ so that $(\bar x, s)$ is a solution of       \eqref{scalar_problem_Pascoletti_Serafini}.
	\end{itemize} 
	If, in addition, $K$ is closed  and nontrivial, then $1^\circ$ and $2^\circ$ are equivalent to:
	\begin{itemize} 
		\item[$3^\circ$] There are some $a \in \mathcal{P}$, $k \in \mathcal{T}$ and $s \in \mathcal{R}$ so that 
		$
		s = \varphi^G_{a - K, k}(f(\bar x)) = {\rm min}_{x \in \Omega}\; \varphi^G_{a - K, k}(f(x)).
		$
	\end{itemize}   
\end{proposition}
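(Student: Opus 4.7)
The proof plan closely parallels that of Proposition \ref{prop:result_scalarization_Jahn_2_new}, with the efficiency results replaced by their weak-efficiency counterparts from Lemma \ref{lem:scalarization_Gerstewitz_Pascoletti_Serafini}. I would establish the equivalences in the cycle $2^\circ \Longrightarrow 1^\circ$, then $1^\circ \Longrightarrow 2^\circ$, and finally (under the additional closedness assumption) $2^\circ \Longleftrightarrow 3^\circ$.

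First, for $2^\circ \Longrightarrow 1^\circ$, I would invoke Lemma \ref{lem:scalarization_Gerstewitz_Pascoletti_Serafini} ($4^\circ$) directly: if $(\bar x, s)$ solves \eqref{scalar_problem_Pascoletti_Serafini} for some $a \in Y$ and $k \in Y \setminus \{0\}$ (in particular, $k \in \mathcal{T} \subseteq \intt K \subseteq Y \setminus \{0\}$, which is nonempty by hypothesis since $\mathcal T$ meets the interior), then $\bar x \in \WEff(\Omega \mid f,K)$. No further argument is needed here.

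For the main direction $1^\circ \Longrightarrow 2^\circ$, let $\bar x \in \WEff(\Omega \mid f,K)$. Pick any $k \in \mathcal{T} \subseteq \intt K$; by Lemma \ref{lem:scalarization_Gerstewitz_Pascoletti_Serafini} ($2^\circ$), the pair $(\bar x, 0)$ solves \eqref{scalar_problem_Pascoletti_Serafini} with $a = f(\bar x)$. A simple translation observation (which I would spell out) shows that for every $t \in \mathbb{R}$, $(\bar x, 0)$ solves the PS problem with parameters $(a, k) = (f(\bar x), k)$ if and only if $(\bar x, t)$ solves the PS problem with parameters $(f(\bar x) - tk, k)$; indeed, the constraint $f(x) \in \lambda k + (a - K)$ is invariant under the substitution $\lambda \mapsto \lambda - t$, $a \mapsto a - tk$, and the objective changes by the additive constant $-t$, preserving the argmin. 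Now the hypothesis $f[\WEff(\Omega \mid f,K)] \subseteq \mathcal{P} + \mathcal{R} \cdot \mathcal{T}$ provides $\bar a \in \mathcal{P}$, $\bar k \in \mathcal{T}$ and $s \in \mathcal{R}$ with $f(\bar x) = \bar a + s \bar k$. Applying Lemma \ref{lem:scalarization_Gerstewitz_Pascoletti_Serafini} ($2^\circ$) with $\bar k \in \mathcal{T} \subseteq \intt K$ and then the translation argument with $t = s$ yields that $(\bar x, s)$ solves \eqref{scalar_problem_Pascoletti_Serafini} for $a = \bar a \in \mathcal{P}$ and $k = \bar k \in \mathcal{T}$, which is $2^\circ$.

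Finally, when $K$ is closed and nontrivial, $k \in \mathcal{T} \subseteq \intt K \subseteq K \setminus \ell(K)$ (using $K \neq Y$, which gives $\ell(K) \cap \intt K = \emptyset$ as noted in the preliminaries), so Lemma \ref{lem:equivalencePSproblemAndGproblem} applies and directly gives the equivalence of $2^\circ$ and $3^\circ$: being a solution of \eqref{scalar_problem_Pascoletti_Serafini} at level $s$ translates to $s = \varphi^G_{a-K,k}(f(\bar x)) = \min_{x \in \Omega} \varphi^G_{a-K,k}(f(x))$. The only subtle point in the whole argument is the translation trick in step two and the verification that $\bar k \in \intt K$ survives the passage from $k$ to $\bar k$, both of which are routine; there is no real obstacle beyond keeping the roles of $k$ and $\bar k$ straight.
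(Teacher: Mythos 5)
Your proof is correct and follows essentially the same route as the paper: Lemma \ref{lem:scalarization_Gerstewitz_Pascoletti_Serafini} ($4^\circ$) for $2^\circ \Rightarrow 1^\circ$, the translation trick combined with Lemma \ref{lem:scalarization_Gerstewitz_Pascoletti_Serafini} ($2^\circ$) for $1^\circ \Rightarrow 2^\circ$ (mirroring Proposition \ref{prop:result_scalarization_Jahn_2_new}), and Lemma \ref{lem:equivalencePSproblemAndGproblem} for $2^\circ \Leftrightarrow 3^\circ$. The only cosmetic remark: your inclusion $\intt K \subseteq Y \setminus \{0\}$ presupposes $K \neq Y$, which is not assumed for the first equivalence; the paper closes this by observing that solvability of \eqref{scalar_problem_Pascoletti_Serafini} in $2^\circ$ already forces $K \neq Y$, so the implication holds in all cases.
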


\begin{proof}
	The implication $2^\circ \Longrightarrow 1^\circ$ is an immediate consequence of Lemma \ref{lem:scalarization_Gerstewitz_Pascoletti_Serafini} ($4^\circ$) taking into account that $\intt\,K \subseteq K \setminus \{0\}$, or equivalently $K \neq Y$ (since \eqref{scalar_problem_Pascoletti_Serafini} has a solution),  while the proof of the implication $1^\circ \Longrightarrow 2^\circ$ is similar to the proof of the corresponding implication in Proposition \ref{prop:result_scalarization_Jahn_2_new} by using Lemma \ref{lem:scalarization_Gerstewitz_Pascoletti_Serafini} ($2^\circ$).
	
	If $K$ is closed, the equivalence of $2^\circ$ and $3^\circ$ follows immediately from Lemma \ref{lem:equivalencePSproblemAndGproblem}.
\end{proof}

\begin{remark} Assuming that $K$ has a normlike-base $B_K(\psi)$ (which means that the seminorm $\psi: Y \to \mathbb{R}$ is positive on $K \setminus \{0\}$) and $f[{\rm Eff}(\Omega \mid f,K)] \subseteq a + K \setminus \ell(K)$ (respectively, $f[{\rm WEff}(\Omega \mid f,K)] \subseteq a + \intt\,K$) for some $a \in Y$,
	in Proposition \ref{prop:result_scalarization_Jahn_2_new} (respectively, Proposition \ref{prop:result_scalarization_Jahn_3_new}), one may take $\mathcal{P} := \{a\}$, $\mathcal{T} := B_{\{0\} \cup (K \setminus \ell(K))}(\psi)$ (respectively, $\mathcal{T} := B_{\{0\} \cup \intt\,K}(\psi)$ if $0 \notin \intt\,K$, or equivalently, $K \neq Y$) and $\mathcal{R} := \mathbb{P}$. 
\end{remark}

The following proposition provides possible choices for the sets $\mathcal{P} \subseteq Y$, $\mathcal{T} \subseteq K\setminus \ell(K)$ and $\mathcal{R} \subseteq \mathbb{R}$ to ensure 
$f[D] \subseteq \mathcal{P} + \mathcal{R} \cdot\mathcal{T}$
with $D = {\rm Eff}(\Omega \mid f,K)$, $D = {\rm WEff}(\Omega \mid f,K)$ and $D = \Omega$, respectively, in Propositions \ref{prop:result_scalarization_Jahn_2_new} and \ref{prop:result_scalarization_Jahn_3_new}.

\begin{proposition} \label{prop:Eichfelder}
	Let $K \subseteq Y$ be a nontrivial, convex cone and let $y^* \in Y^* \setminus \{0\}$, $b \in Y$ and $k \in K \setminus \{0\}$ be given. Suppose that $y^*(k) > 0$ (e.g. if $y^* \in K^\#$). Define
	\begin{align*}
		H_{y^*, b}^\sim := \{y \in Y \mid y^*(y) \sim y^*(b)\} \quad \text{for all }\sim \in \{=, <, >\}. 
	\end{align*}
	Then, the following assertions are valid:
	\begin{itemize}
		\item[$1^\circ$] $H_{y^*, b}^> = H_{y^*, b}^= + \mathbb{P} \cdot k$ and $H_{y^*, b}^< = H_{y^*, b}^= - \mathbb{P} \cdot k$.
		\item[$2^\circ$] If $y^* \in K^\#$, then 
		$b + K \setminus \{0\} \subseteq H_{y^*, b}^= + \mathbb{P} \cdot k$
		and
		$b - K \setminus \{0\} \subseteq H_{y^*, b}^= - \mathbb{P} \cdot k$.
		\item[$3^\circ$] If $y^* \in K^+$ and $k \in \intt\,K$, then 
		$b + \intt\,K \subseteq H_{y^*, b}^= + \mathbb{P} \cdot k$
		and
		$b - \intt\,K \subseteq H_{y^*, b}^= - \mathbb{P} \cdot k$.
		\item[$4^\circ$] $Y = \bigcup_{\sim \in \{<,=,>\}}\, H_{y^*, b}^\sim =  H_{y^*, b}^= + \mathbb{R} \cdot k$.
	\end{itemize}
\end{proposition}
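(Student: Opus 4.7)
The plan is to establish $1^\circ$ directly by an affine bijection along the direction $k$, and then derive $2^\circ$, $3^\circ$, and $4^\circ$ as essentially cosmetic consequences of $1^\circ$. Throughout, the scalar $y^*(k) > 0$ is what powers every step: it lets us translate along $k$ to move between the level sets $H_{y^*,a}^=$, $H_{y^*,a}^<$, $H_{y^*,a}^>$ without ever dividing by zero.

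For $1^\circ$, I would treat the first identity $H_{y^*,a}^> = H_{y^*,a}^= + \mathbb{P}\cdot k$ and note that the second follows by replacing $k$ with $-k$ (or equivalently reversing the sign of $\lambda$). The inclusion ``$\supseteq$'' is immediate: if $y = h + \lambda k$ with $y^*(h) = y^*(a)$ and $\lambda > 0$, then $y^*(y) = y^*(a) + \lambda y^*(k) > y^*(a)$. For the inclusion ``$\subseteq$'', given $y$ with $y^*(y) > y^*(a)$, I would set $\lambda := (y^*(y) - y^*(a))/y^*(k) \in \mathbb{P}$ and $h := y - \lambda k$; a direct check gives $y^*(h) = y^*(a)$, so that $y \in H_{y^*,a}^= + \mathbb{P}\cdot k$.

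The assertion $2^\circ$ then reduces to showing $a + K\setminus\{0\} \subseteq H_{y^*,a}^>$ and $a - K\setminus\{0\} \subseteq H_{y^*,a}^<$, both of which are direct from $y^* \in K^\#$ (which by definition means $y^*(y) > 0$ for all $y \in K \setminus \{0\}$). Combining with $1^\circ$ yields the claim. For $3^\circ$ the analogous statement requires the standard fact that any $y^* \in K^+ \setminus \{0\}$ is strictly positive on $\intt\,K$; I would record this as a short argument: if $y^*(y) = 0$ for some $y \in \intt\,K$, pick any $v \in Y$ with $y^*(v) < 0$ (exists since $y^*\neq 0$, guaranteed here because $y^*(k) > 0$) and use $y + \varepsilon v \in K$ for small $\varepsilon > 0$ to derive $\varepsilon y^*(v) \geq 0$, a contradiction. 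Then $a \pm \intt\,K \subseteq H_{y^*,a}^{\gtrless}$, and $1^\circ$ finishes the argument.

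Finally, $4^\circ$ is a trichotomy argument: every $y \in Y$ satisfies exactly one of $y^*(y) = y^*(a)$, $y^*(y) > y^*(a)$, $y^*(y) < y^*(a)$, giving $Y = H_{y^*,a}^< \cup H_{y^*,a}^= \cup H_{y^*,a}^>$; using $1^\circ$ and $\mathbb{R} = (-\mathbb{P}) \cup \{0\} \cup \mathbb{P}$ immediately rewrites this as $H_{y^*,a}^= + \mathbb{R}\cdot k$. I do not anticipate any real obstacle here; the only point requiring slight care is the strict-positivity lemma used in $3^\circ$, which is why the hypothesis $y^* \in K^+$ (rather than merely $y^* \in Y^*$) and $k \in \intt\,K$ (rather than $k \in K\setminus\{0\}$) are both essential in that item.
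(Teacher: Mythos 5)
Your proof is correct and follows essentially the same route as the paper's, which declares $1^\circ$ and $2^\circ$ straightforward and derives $3^\circ$ and $4^\circ$ from them via the fact that every $y^*\in K^+\setminus\{0\}$ is strictly positive on $\intt K$ (for $K$ solid). The only difference is that you prove this strict-positivity fact inline, whereas the paper cites it from the literature.
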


\begin{proof} 
The proofs for $1^\circ$ and $2^\circ$ are straightforward, while $3^\circ$ and $4^\circ$ follow easily by using $1^\circ$ and $2^\circ$, taking into account that, for 
a nontrivial, solid, convex cone $K$ and $C := ({\rm int}\,K) \cup \{0\}$  we have ${\rm int}\,K = C \setminus \{0\}$ and $K^+ \setminus \{0\} = \{x^* \in Y^* \mid \forall x \in {\rm int}\,K: x^*(x) > 0\} = C^\#$ (see also \cite[Th. 4.8, Cor. 4.9]{Khazayel2021a}).
\end{proof}

\begin{remark} 
	Let the assumptions of Proposition \ref{prop:Eichfelder} be fulfilled.
	Eichfelder's adaptive scalarization method in multiobjective optimization proposed in \cite{Eichfelder2008, Eichfelder2009} is based on parameterized scalar optimization problems of type \eqref{scalar_problem_Pascoletti_Serafini} and \eqref{scalar_problem_Gerstewitz}, respectively, with different choices of the parameter $a \in H_{y^*, b}^=$, for a-priori fixed $b \in Y$ and direction $k \in K \setminus \{0\}.$ Note that Proposition \ref{prop:result_scalarization_Jahn_2_new}  generalizes a result by Eichfelder \cite[Th. 3.2]{Eichfelder2009} taking into account that $f[{\rm Eff}(\Omega \mid f,K)]  \subseteq Y =  \mathcal{P} + \mathcal{R} \cdot \mathcal{T}$ for $\mathcal{P} := H_{y^*, b}^=$, $\mathcal{T} := \{k\}$ and $\mathcal{R} := \mathbb{R}$ by Proposition \ref{prop:Eichfelder} ($4^\circ$).
\end{remark}

\section{Bishop-Phelps Type Scalarization in Vector Optimization} \label{sec:conic_scalarization}

Assume that the real topological-linear space $Y$ is endowed with a nontrivial cone $K \subseteq Y$. Consider a sublinear functional $\varphi: Y \to \mathbb{R}$ with $\varphi(0) = 0$.
As highlighted in Section \ref{sec:introduction}, for any given point $a \in Y$ and any direction $k \in Y \setminus \{0\}$, the following scalarizations of the vector problem \eqref{vector_problem_P} will play a key role in our studies:
\begin{equation}
	\label{scal_problem_Kas}
	\tag{\ensuremath{P_{\varphi}^{a}}}
	\begin{cases}
		\varphi(f(x) - a) \to \min \\
		x \in \Omega
	\end{cases}
\end{equation}
and
\begin{equation}
	\label{scal_problem_GerPasSer}
	\tag{\ensuremath{P_{\varphi}^{a,k}}}
	\begin{cases}
		\lambda \to \min \\
		\varphi(f(x) - a - \lambda k) \leq 0,\\
		(x, \lambda) \in \Omega \times \mathbb{R}.
	\end{cases}
\end{equation}
When dealing with the scalar problems \eqref{scal_problem_Kas} and \eqref{scal_problem_GerPasSer}, we will impose that $\varphi$ is $-C_\psi(x^*, \alpha)$ representing for some $(x^*, \alpha) \in Y^* \times \R_+$.  
Thus, the Bishop-Phelps type cone $C_\psi(x^*, \alpha)$ plays an important role in our scalarization approach due to the cone-representing property of $\varphi$. This is why our method is called \textbf{Bishop-Phelps type scalarization in vector optimization}.\\

\begin{remark} 
It is known that several scalarizing functions in vector optimization (e.g., in the sense of Gerstewitz \cite{Gerstewitz1983,Gerstewitz1984}; Hiriart-Urruty \cite{HiriartUrruty1979} / Zaffaroni \cite[Prop. 3.2]{Zaffaroni2003}) admit cone-representation properties. Such functions can therefore be used well in our approach. Let us describe two types of scalarizing functions in more detail:
\begin{itemize}
    \item The nonlinear scalarizing function $\varphi^G_{- C_\psi(x^*, \alpha), k}$ (i.e., the function defined in \eqref{eq:Gerstewitz_fcn} with $a = 0$ and with $C_\psi(x^*, \alpha)$ in the role of $K$)  is sublinear and $-C_\psi(x^*, \alpha)$ representing (respectively, strictly $-C_\psi(x^*, \alpha)$ representing) with $\varphi^G_{- C_\psi(x^*, \alpha), k}(0) = 0$, if $C_\psi(x^*, \alpha)$ is nontrivial and closed, and $k \in C_\psi(x^*, \alpha) \setminus \ell(C_\psi(x^*, \alpha)$ (respectively, $k \in \intt\,C_\psi(x^*, \alpha)$).
    \item 
    According to Lemma \ref{lem:properties_BP_fcns} ($2^\circ$, $4^\circ$), the Bishop-Phelps type (seminorm-linear) function $\varphi_{x^*, \alpha}$ introduced   
    in \eqref{eq:seminorm_lin_fcn} is sublinear and $-C_\psi(x^*, \alpha)$ representing with $\varphi_{x^*, \alpha}(0) = 0$, and if $C^{>}_\psi(x^*, \alpha) \neq \emptyset$ and $\intt\,C_\psi(x^*, \alpha) \neq \emptyset$, then
    $\varphi_{x^*, \alpha}$ is strictly $-C_\psi(x^*, \alpha)$ representing.
\end{itemize}
\end{remark}

\subsection{Basic scalarization results} \label{sec:basics_scalarization}

In this section, we apply the results derived in Section \ref{sec:scalarization}, which are based on the monotonicity and cone-representing properties of the scalarizing function. We apply these results to Bishop-Phelps type cones $C_\psi(x^*, \alpha)$, which act as dilating or enlarging cones for the original cone $K$. By doing so, we derive basic results related to the Bishop-Phelps type scalarization method. In Sections \ref{sec:scalarization_WEff}, \ref{sec:scalarization_PEff_A} and \ref{sec:scalarization_PEff_Henig}, we derive additional Bishop-Phelps type scalarization results that heavily rely on the nonlinear cone separation results from Section \ref{sec:cone_separation}. These results are not derived from standard monotonicity arguments alone. 

In the next theorem, we present first relationships between the vector problem \eqref{vector_problem_P} and the scalar problems \eqref{scal_problem_Kas} and \eqref{scal_problem_GerPasSer}. 

\begin{theorem} \label{th:argminPhiCircfisPWEff}
	For any $a \in Y$, the following assertions hold:
	\begin{itemize}
		\item[$1^\circ$] If $(x^*, \alpha) \in K^{a\circ}$, and $\varphi$ is strictly $C_\psi(x^*, \alpha)$-increasing (e.g., if $\varphi$ is strictly $-C_\psi(x^*, \alpha)$ representing), then 
        ${\rm argmin}_{x \in \Omega} \; \varphi(f(x) - a) \subseteq {\rm WEff}(\Omega \mid f,K)$.    
		\item[$2^\circ$] If $(x^*, \alpha) \in K^{a\#}$ and $\varphi$ is strongly $(C_\psi^>(x^*, \alpha) \cup \{0\})$-increasing, then 	${\rm argmin}_{x \in \Omega} \; \varphi(f(x) - a) \subseteq {\rm Eff}(\Omega \mid f,K)$.    
		\item[$3^\circ$] Assume that $\overline{x} \in \Omega$. If $(x^*, \alpha) \in K^{a+}$, $\varphi$ is $C_\psi(x^*, \alpha)$-increasing, and 
		${\rm argmin}_{x \in \Omega}\; \varphi(f(x) - a) = \{\overline{x}\}$, then $\overline{x}\in  {\rm Eff}(\Omega \mid f,K)$.	  
		\item[$4^\circ$] If $(x^*, \alpha) \in K^{a\#}$, $\intt\,C_\psi(x^*, \alpha) \neq \emptyset$,  and $\varphi$ is strictly $C_\psi(x^*, \alpha)$-increasing or strongly $C_\psi(x^*, \alpha)$-increasing, then 	${\rm argmin}_{x \in \Omega} \; \varphi(f(x) - a) \subseteq {\rm PEff}_{He}(\Omega \mid f,K)$.
	\end{itemize}
\end{theorem}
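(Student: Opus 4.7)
The plan is to reduce each of the four items to the corresponding part of Lemma \ref{lem:monotone_fcns_wpEff} applied to the shifted vector objective $x \mapsto f(x) - a$. First observe that the binary relations $\leq_K$, $<_K$ and $\lneq_K$ are translation invariant, so $\Eff(\Omega \mid f - a, K) = \Eff(\Omega \mid f, K)$, $\WEff(\Omega \mid f - a, K) = \WEff(\Omega \mid f, K)$ and $\PEff_{He}(\Omega \mid f - a, K) = \PEff_{He}(\Omega \mid f, K)$. Hence it suffices, in each case, to upgrade the monotonicity of $\varphi$ stated with respect to (some object derived from) $C_\psi(x^*, \alpha)$ into the analogous monotonicity with respect to $K$ (or a dilating cone of $K$), and then to invoke Lemma \ref{lem:monotone_fcns_wpEff}. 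The required cone inclusions are all read off the definitions of $K^{a+}$, $K^{a\circ}$, $K^{a\#}$ together with Lemmas \ref{lem:properties_BP}, \ref{lem:properties_BP_fcns} and \ref{lem:monotonicity_Krepresenting_fcn}.

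Concretely, for ($1^\circ$), $(x^*, \alpha) \in K^{a\circ}$ gives $\intt\,K \subseteq C_\psi^>(x^*, \alpha)$ by definition; since $\intt\,K$ is open, every $y \in \intt\,K$ has a neighborhood contained in $\intt\,K \subseteq C_\psi(x^*, \alpha)$, whence $\intt\,K \subseteq \intt\,C_\psi(x^*, \alpha)$. By Lemma \ref{lem:monotonicity_Krepresenting_fcn} ($2^\circ$), strict $C_\psi(x^*, \alpha)$-increasingness of $\varphi$ implies strict $K$-increasingness, and Lemma \ref{lem:monotone_fcns_wpEff} ($1^\circ$) closes the argument. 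For ($2^\circ$), $(x^*, \alpha) \in K^{a\#}$ gives $K \setminus \{0\} \subseteq C_\psi^>(x^*, \alpha)$, i.e., $K \subseteq C_\psi^>(x^*, \alpha) \cup \{0\}$; since $C_\psi^>(x^*, \alpha) \cup \{0\}$ is a cone by Lemma \ref{lem:properties_BP} ($1^\circ$), Lemma \ref{lem:monotonicity_Krepresenting_fcn} ($3^\circ$) converts strong $(C_\psi^>(x^*, \alpha) \cup \{0\})$-increasingness of $\varphi$ into strong $K$-increasingness, and Lemma \ref{lem:monotone_fcns_wpEff} ($2^\circ$) finishes.

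For ($3^\circ$), $(x^*, \alpha) \in K^{a+}$ gives $K \subseteq C_\psi(x^*, \alpha)$ directly, so Lemma \ref{lem:monotonicity_Krepresenting_fcn} ($3^\circ$) upgrades $C_\psi(x^*, \alpha)$-increasingness of $\varphi$ to $K$-increasingness; combining with the singleton hypothesis ${\rm argmin}_{x \in \Omega}\; \varphi(f(x) - a) = \{\overline{x}\}$ and Lemma \ref{lem:monotone_fcns_wpEff} ($3^\circ$) yields $\overline{x} \in \Eff(\Omega \mid f, K)$. For ($4^\circ$), the additional assumption $\intt\,C_\psi(x^*, \alpha) \neq \emptyset$ together with $(x^*, \alpha) \in K^{a\#}$ triggers Lemma \ref{lem:properties_BP} ($8^\circ$), which gives $K \setminus \{0\} \subseteq \intt\,C_\psi(x^*, \alpha)$ and hence $C_\psi(x^*, \alpha) \in \mathcal{D}(K)$; a direct application of Lemma \ref{lem:monotone_fcns_wpEff} ($4^\circ$) with $C := C_\psi(x^*, \alpha)$ then concludes.

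The point to handle carefully is the mismatch between algebraic and topological interior: Lemma \ref{lem:properties_BP_fcns} ($3^\circ$) guarantees only $C_\psi^>(x^*, \alpha) \subseteq \cor\,C_\psi(x^*, \alpha)$, whereas the definitions of strict $C$-increasingness and of weak efficiency use the topological interior. In ($1^\circ$) this is bridged by exploiting openness of $\intt\,K$ to lift the inclusion to $\intt\,C_\psi(x^*, \alpha)$, while ($4^\circ$) avoids the issue through the explicit hypothesis $\intt\,C_\psi(x^*, \alpha) \neq \emptyset$; no such subtlety arises in ($2^\circ$) and ($3^\circ$), since strong and (non-strict) $K$-increasingness do not refer to interiors.
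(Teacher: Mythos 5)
Your proposal is correct and follows essentially the same route as the paper's proof: each item is reduced to Lemma \ref{lem:monotone_fcns_wpEff} after transferring the monotonicity of $\varphi$ from the Bishop-Phelps type cone to $K$ (or exhibiting $C_\psi(x^*,\alpha)\in\mathcal{D}(K)$) via Lemma \ref{lem:monotonicity_Krepresenting_fcn} and the cone inclusions encoded in $K^{a+}$, $K^{a\circ}$, $K^{a\#}$. The only cosmetic differences are that you shift the objective $f-a$ instead of the scalarizing function $\varphi(\cdot-a)$, and in $1^\circ$ you justify $\intt\,K\subseteq\intt\,C_\psi(x^*,\alpha)$ directly by openness rather than via Lemma \ref{lem:properties_BP} ($6^\circ$).
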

\begin{proof}
	$1^\circ$  For $\intt\,K = \emptyset$ the result is clear. Assume that $\intt\,K \neq \emptyset$. Consider $(x^*, \alpha) \in K^{a\circ}$. Obviously, if $\varphi$ is strictly $C_\psi(x^*, \alpha)$-increasing, then $\varphi(\cdot - a)$ is strictly $C_\psi(x^*, \alpha)$-increasing as well. Because $(x^*, \alpha) \in K^{a\circ}$ we have $\emptyset\neq \intt\,K \subseteq  C_\psi^>(x^*, \alpha) = \intt\,C_\psi(x^*, \alpha)$.
    Finally, Lemma \ref{lem:monotonicity_Krepresenting_fcn} ($2^\circ$) and Lemma \ref{lem:monotone_fcns_wpEff} ($1^\circ$) yield the conclusion.
	
	$2^\circ$ and $3^\circ$ The proof of the assertions is similar to the proof of assertion $1^\circ$ by using Lemma \ref{lem:monotonicity_Krepresenting_fcn} ($3^\circ$) and Lemma \ref{lem:monotone_fcns_wpEff} ($2^\circ, 3^\circ$).
	
	$4^\circ$ Take some $(x^*, \alpha) \in K^{a\#}$. Since $\varphi$ is strictly (respectively, strongly) $C_\psi(x^*, \alpha)$-increasing we have that $\varphi(\cdot - a)$ is strictly (respectively, strongly) $C_\psi(x^*, \alpha)$-increasing as well. Because $(x^*, \alpha) \in K^{a\#}$ we get $K \setminus \{0\} \subseteq C^>_\psi(x^*, \alpha) = \intt\, C_\psi(x^*, \alpha)$. Taking into account that $C_\psi(x^*, \alpha)$ is a nontrivial convex cone, we conclude that $C_\psi(x^*, \alpha) \in \mathcal{D}(K)$. Finally, with the aid of Lemma \ref{lem:monotone_fcns_wpEff} ($4^\circ$) we get the desired inclusion. 
\end{proof}

In particular, we like to point out monotonicity properties of the seminorm-linear function $\varphi_{x^*, \alpha}$ with respect to a Bishop-Phelps type cone $C_\psi(x^*, \alpha)$.

\begin{lemma} \label{lem:monotonicity_normlinear_fcn_BP}
	Consider $(x^*, \alpha) \in Y^* \times \mathbb{R}_+$. Then, the following assertions hold:
	\begin{itemize}
		\item[$1^\circ$] $\varphi_{x^*, \alpha}$ is $C_\psi(x^*, \alpha)$-increasing.
		\item[$2^\circ$] If $C^>_\psi(x^*, \alpha) \neq \emptyset$, then $\varphi_{x^*, \alpha}$ is strictly $C_\psi(x^*, \alpha)$-increasing.
		\item[$3^\circ$] If $K$ is solid and $(x^*, \alpha) \in K^{a\circ}$, then $\varphi_{x^*, \alpha}$ is strictly $C_\psi(x^*, \alpha)$-increasing.	
		\item[$4^\circ$] If $(x^*, \alpha) \in K^{a\#}$, then $\varphi_{x^*, \alpha}$ is strictly $C_\psi(x^*, \alpha)$-increasing.
		\item[$5^\circ$] $\varphi_{x^*, \alpha}$ is strongly $(C_\psi^>(x^*, \alpha) \cup \{0\})$-increasing.
	\end{itemize}
\end{lemma}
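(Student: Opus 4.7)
The plan is to reduce the five assertions to previously established results on Bishop-Phelps type cones and functions, with one direct computation serving as the workhorse for the remaining statements.

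For assertion $1^\circ$, I would invoke Lemma \ref{lem:properties_BP_fcns} ($2^\circ$), which gives that $\varphi_{x^*,\alpha}$ is sublinear (hence subadditive) and $-C_\psi(x^*,\alpha)$ representing. Combining these two properties, Lemma \ref{lem:monotonicity_Krepresenting_fcn} ($1^\circ$) delivers the claim.

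For assertions $2^\circ$, $3^\circ$ and $4^\circ$, the common scheme is to get $\varphi_{x^*,\alpha}$ into the strict case of Lemma \ref{lem:monotonicity_Krepresenting_fcn} ($1^\circ$) via Lemma \ref{lem:properties_BP_fcns} ($4^\circ$). That latter result requires both $C^>_\psi(x^*,\alpha)\neq\emptyset$ and $\intt\,C_\psi(x^*,\alpha)\neq\emptyset$. For $2^\circ$ I would split on whether $\intt\,C_\psi(x^*,\alpha)$ is empty: if it is, strict $C_\psi(x^*,\alpha)$-monotonicity holds vacuously (the relation $y<_{C_\psi(x^*,\alpha)}\bar y$ is never satisfied); otherwise, both conditions of Lemma \ref{lem:properties_BP_fcns} ($4^\circ$) hold and the conclusion follows as above. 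For $3^\circ$ and $4^\circ$, Lemma \ref{lem:properties_BP} ($6^\circ$) furnishes both $C^>_\psi(x^*,\alpha)\neq\emptyset$ and $\intt\,C_\psi(x^*,\alpha)\neq\emptyset$ (from $(x^*,\alpha)\in K^{a\circ}$ with $K$ solid, respectively from $(x^*,\alpha)\in K^{a\#}$), so we are directly in the nonvacuous branch of $2^\circ$.

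Assertion $5^\circ$ is the only one that needs a short direct calculation, and I expect this to be the main (though still modest) obstacle, since it is not an immediate consequence of a strict representing property (the set $C^>_\psi(x^*,\alpha)\cup\{0\}$ is not, in general, the complement of the zero sublevel set of $\varphi_{x^*,\alpha}$). I would take $y,\bar y\in Y$ with $\bar y-y\in C^>_\psi(x^*,\alpha)$, so that $x^*(\bar y-y)>\alpha\psi(\bar y-y)$, and then use the reverse triangle inequality for the seminorm $\psi$, namely $\psi(\bar y)-\psi(y)\geq -\psi(\bar y-y)$, to write
\begin{equation*}
\varphi_{x^*,\alpha}(\bar y)-\varphi_{x^*,\alpha}(y)=x^*(\bar y-y)+\alpha\bigl(\psi(\bar y)-\psi(y)\bigr)\geq x^*(\bar y-y)-\alpha\psi(\bar y-y)>0,
\end{equation*}
which yields strong $(C^>_\psi(x^*,\alpha)\cup\{0\})$-monotonicity.
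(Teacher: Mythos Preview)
Your proof is correct and follows the paper's approach almost verbatim: the paper likewise reduces $1^\circ$ and $2^\circ$ to Lemma~\ref{lem:monotonicity_Krepresenting_fcn} ($1^\circ$) via the (strict) $-C_\psi(x^*,\alpha)$-representing property, obtains $3^\circ$ and $4^\circ$ from $2^\circ$, and dispatches $5^\circ$ with the remark ``it follows from the definitions'' (your explicit computation is exactly what is meant). One small overclaim: for $4^\circ$, Lemma~\ref{lem:properties_BP} ($6^\circ$) does \emph{not} in general give $\intt\,C_\psi(x^*,\alpha)\neq\emptyset$ from $(x^*,\alpha)\in K^{a\#}$ alone (only $\cor\,C_\psi(x^*,\alpha)\neq\emptyset$), so you are not automatically in the nonvacuous branch; the fix is simply to invoke the full statement of $2^\circ$ (both branches), exactly as the paper does.
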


\begin{proof}
	$1^\circ$ Noting that $\varphi_{x^*, \alpha}$ is $-C_\psi(x^*, \alpha)$ representing, the assertion follows from Lemma \ref{lem:monotonicity_Krepresenting_fcn} ($1^\circ$).
	
	$2^\circ$ If $\intt\, C_\psi(x^*, \alpha) = \emptyset$, there is nothing to prove. If $\intt\, C_\psi(x^*, \alpha) \neq \emptyset$, using the assumption $C^>_\psi(x^*, \alpha) \neq \emptyset$ we have that $\varphi_{x^*, \alpha}$ is strictly $-C_\psi(x^*, \alpha)$ representing, and the result follows from Lemma \ref{lem:monotonicity_Krepresenting_fcn} ($1^\circ$). 
	
	$3^\circ$ and $4^\circ$ Using the assumptions it follows that $C^>_\psi(x^*, \alpha) \neq \emptyset$, so by assertion $2^\circ$ we get the conclusion.
	
	$5^\circ$ It follows from the definitions. 
\end{proof}

The next lemma, which studies the monotonicity properties of $\varphi_{x^*, \alpha}$ with respect to the original cone $K$, extends a corresponding result derived by Kasimbeyli \cite[Th. 3.5]{Kasimbeyli2010} in a normed setting.

\begin{lemma} \label{lem:monotonicity_normlinear_fcn}
	Consider $(x^*, \alpha) \in Y^* \times \mathbb{R}_+$. The following assertions hold:
	\begin{itemize}
		\item[$1^\circ$] $(x^*, \alpha) \in K^{a+}$ $\iff$ $\varphi_{x^*, \alpha}$ is $K$-increasing.
		\item[$2^\circ$] $(x^*, \alpha) \in K^{a\circ}$ $\iff$ $\varphi_{x^*, \alpha}$ is strictly $K$-increasing. 
		\item[$3^\circ$] $(x^*, \alpha) \in K^{a\#}$ $\iff$ $\varphi_{x^*, \alpha}$ is strongly $K$-increasing.	
	\end{itemize}
\end{lemma}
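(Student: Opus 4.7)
The proof plan hinges on the sublinearity of $\varphi_{x^*,\alpha}$ (established in Lemma \ref{lem:properties_BP_fcns} ($2^\circ$)) together with the identification $S^{\leq}_\psi(x^*,\alpha) = -C_\psi(x^*,\alpha)$ and $S^{<}_\psi(x^*,\alpha) = -C^{>}_\psi(x^*,\alpha)$. From sublinearity, for any $y, \bar y \in Y$ one has the master inequality
\begin{equation*}
\varphi_{x^*,\alpha}(y) \;=\; \varphi_{x^*,\alpha}\bigl(\bar y + (y-\bar y)\bigr) \;\leq\; \varphi_{x^*,\alpha}(\bar y) + \varphi_{x^*,\alpha}(-(\bar y - y)),
\end{equation*}
and since $\psi$ is symmetric, $\varphi_{x^*,\alpha}(-z) = -x^*(z) + \alpha \psi(z)$ for any $z \in Y$. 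So the sign of $\varphi_{x^*,\alpha}(-(\bar y - y))$ — i.e.\ whether $\bar y - y$ lies in $C_\psi(x^*,\alpha)$, in $C^{>}_\psi(x^*,\alpha)$, etc.\ — controls the difference $\varphi_{x^*,\alpha}(\bar y) - \varphi_{x^*,\alpha}(y)$.

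For the forward implications, I would just translate the membership of $(x^*,\alpha)$ in the respective augmented dual cone into a containment of $K$ (resp.\ $\intt K$, resp.\ $K\setminus\{0\}$) inside $C_\psi(x^*,\alpha)$ (resp.\ $C^{>}_\psi(x^*,\alpha)$). Concretely: if $(x^*,\alpha) \in K^{a+}$ and $\bar y - y \in K$, then $\bar y - y \in C_\psi(x^*,\alpha)$, hence $\varphi_{x^*,\alpha}(-(\bar y - y)) \leq 0$, so the master inequality yields $\varphi_{x^*,\alpha}(y) \leq \varphi_{x^*,\alpha}(\bar y)$. Replacing $K$ by $\intt K$ with $(x^*,\alpha) \in K^{a\circ}$ gives strict inequality and proves the forward part of $2^\circ$; replacing $K$ by $K\setminus\{0\}$ with $(x^*,\alpha) \in K^{a\#}$ proves the forward part of $3^\circ$.

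For the reverse implications, the trick is to test the monotonicity hypothesis on the ordered pair $(y,\bar y) = (-k, 0)$ for $k$ in the appropriate subset of $K$. Under $K$-increasingness with $k \in K$, one gets $\varphi_{x^*,\alpha}(-k) \leq \varphi_{x^*,\alpha}(0) = 0$, which unpacks to $x^*(k) \geq \alpha \psi(k)$; since $\alpha\psi(k) \geq 0$, this simultaneously delivers $x^* \in K^+$ and the augmented inequality, so $(x^*,\alpha) \in K^{a+}$. The same trick with $k \in K\setminus\{0\}$ under strong $K$-increasingness gives $x^*(k) > \alpha\psi(k) \geq 0$, hence $x^* \in K^{\#}$ and $(x^*,\alpha) \in K^{a\#}$, proving $3^\circ$. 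Likewise, strict $K$-increasingness applied to $k \in \intt K$ gives $x^*(k) > \alpha\psi(k)$ on $\intt K$, which in particular forces $x^* \neq 0$.

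The delicate step I anticipate is the reverse direction of $2^\circ$: showing $x^* \in K^+$ (not merely $x^*|_{\intt K} > 0$), since strict $K$-increasingness only provides information on $<_K$-pairs, i.e.\ along $\intt K$. The natural workaround is to use the convexity and solidness of $K$ (the standing setting where weak efficiency is discussed) to obtain $K \subseteq \cl(\intt K)$ via the line-segment argument $tk_0 + (1-t)k \in \intt K$ for $k_0 \in \intt K$, $k \in K$, $t \in (0,1]$, and then pass to the limit using continuity of $x^*$ to extend $x^* \geq 0$ from $\intt K$ to all of $K$. This requires acknowledging that the biconditional in $2^\circ$ is intended under the implicit solidness assumption on $K$ (otherwise the two sides decouple when $\intt K = \emptyset$, as one can see by comparing the vacuous left-hand side with the nontrivial membership requirement $x^* \in K^+ \setminus \{0\}$ on the right).
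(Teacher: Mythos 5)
Your proof is correct and follows essentially the same route as the paper: the forward implications rest on the sublinearity of $\varphi_{x^*,\alpha}$ together with the containments $K\subseteq C_\psi(x^*,\alpha)$, $\intt K\subseteq C^>_\psi(x^*,\alpha)$, $K\setminus\{0\}\subseteq C^>_\psi(x^*,\alpha)$ (the paper packages this as Lemma \ref{lem:monotonicity_normlinear_fcn_BP} combined with Lemma \ref{lem:monotonicity_Krepresenting_fcn}), and the reverse implications are obtained by exactly your test pair $(y,\bar y)=(-k,0)$. The ``delicate step'' you flag in the converse of $2^\circ$ is a genuine issue that the paper glosses over: it writes only ``similar to $1^\circ$'' and disposes of the case $\intt K=\emptyset$ by asserting $K^{a\circ}=Y^*\times\mathbb{R}_+$, which does not match its own definition (membership in $K^{a\circ}$ still requires $x^*\in K^+\setminus\{0\}$, while strict $K$-increasingness is then vacuous). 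Your repair --- $K\subseteq\cl(\intt K)$ for a solid convex cone plus continuity of $x^*$ to propagate $x^*\geq 0$ from $\intt K$ to $K$ --- is the right one, and you are correct that it imports hypotheses (convexity, solidness) not stated in the lemma; without them the test pair only yields $x^*>\alpha\psi\geq 0$ on $\intt K$, not $x^*\in K^+$.
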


\begin{proof}
	$1^\circ$ By Lemma \ref{lem:monotonicity_normlinear_fcn_BP} ($1^\circ$), the function $\varphi_{x^*, \alpha}$ is $C_\psi(x^*, \alpha)$-increasing. If $(x^*, \alpha) \in K^{a+}$, then $K \subseteq C_\psi(x^*, \alpha)$, hence $\varphi_{x^*, \alpha}$ is $K$-increasing by Lemma \ref{lem:monotonicity_Krepresenting_fcn} ($3^\circ$). 
	Conversely, assuming $\varphi_{x^*, \alpha}$ is $K$-increasing, for $k \in K$ we have $-k \leq_K 0$, hence $\varphi_{x^*, \alpha}(-k) \leq \varphi_{x^*, \alpha}(0) = 0$, which means that $x^*(k) - \alpha \psi(k) \geq 0$. Noting that $x^* \in K^+$, we conclude $(x^*, \alpha) \in K^{a+}$.

	$2^\circ$ If $\intt\,K = \emptyset$, we have $K^{a\circ} = Y^* \times \mathbb{R}_+$ and $\varphi_{x^*, \alpha}$ is strictly $K$-increasing, hence the equivalence is valid. Assume that $\intt\,K \neq \emptyset$.
	If $(x^*, \alpha) \in K^{a\circ}$, then $\intt\,K \subseteq C^>_\psi(x^*, \alpha) = \intt\, C_\psi(x^*, \alpha)$, and so $\varphi_{x^*, \alpha}$ is strictly $K$-increasing by Lemmata \ref{lem:monotonicity_normlinear_fcn_BP} ($2^\circ$) and \ref{lem:monotonicity_Krepresenting_fcn} ($2^\circ$). The proof of the reverse implication is similar to $1^\circ$.
	
	$3^\circ$ If $(x^*, \alpha) \in K^{a\#}$, then $K \subseteq C_\psi^>(x^*, \alpha) \cup \{0\}$, so $\varphi_{x^*, \alpha}$ is strongly $K$-increasing by Lemmata \ref{lem:monotonicity_normlinear_fcn_BP} ($5^\circ$) and \ref{lem:monotonicity_Krepresenting_fcn} ($3^\circ$). Also here the proof of the reverse implication is similar to $1^\circ$.  
\end{proof}

Let us shift our focus to relationships between the parameterized scalar problem \eqref{scal_problem_GerPasSer} and the vector problem \eqref{vector_problem_P} involving a Bishop-Phelps type cone. 

\begin{lemma} \label{lem:scalarization_Gerstewitz_Pascoletti_Serafini_BPCones}
	Consider $(x^*, \alpha) \in Y^* \times \mathbb{R}_+$ and assume that $\varphi$ is $-C_\psi(x^*, \alpha)$ representing. Then, the following assertions are valid:
	\begin{itemize}
		\item[$1^\circ$] If $\bar x \in {\rm Eff}(\Omega \mid f, C_\psi(x^*, \alpha))$, then $(\bar x, 0)$ solves the problem \eqref{scal_problem_GerPasSer} for $a = f(\bar x)$ and any $k \in C_\psi(x^*, \alpha) \setminus \ell(C_\psi(x^*, \alpha))$. More precisely, $\{(x, 0)\mid x \in \Omega, f(x) = f(\bar x)\}$ is the set of solutions of \eqref{scal_problem_GerPasSer}.
		\item[$2^\circ$] If $\bar x \in {\rm WEff}(\Omega \mid f, C_\psi(x^*, \alpha))$, then $(\bar x, 0)$ solves the problem \eqref{scal_problem_GerPasSer} for $a = f(\bar x)$ and any $k \in \intt\,C_\psi(x^*, \alpha)$.
		\item[$3^\circ$] Consider $(\bar x, \bar \lambda) \in \Omega \times \R$. If $\{(x, \bar \lambda) \mid x \in \Omega, f(x) = f(\bar x)\}$ is the set of solutions of the problem \eqref{scal_problem_GerPasSer} for some $a \in Y$ and $k \in Y\setminus \{0\}$, then $\bar x \in {\rm Eff}(\Omega \mid f, C_\psi(x^*, \alpha))$.
		\item[$4^\circ$] If $(\bar x, \bar \lambda)$ solves the problem \eqref{scal_problem_GerPasSer} for some $a \in Y$ and $k \in Y \setminus\{0\}$, then $\bar x \in {\rm WEff}(\Omega \mid f, C_\psi(x^*, \alpha))$.
	\end{itemize}
\end{lemma}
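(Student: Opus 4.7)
The plan is to reduce the four assertions to Lemma \ref{lem:scalarization_Gerstewitz_Pascoletti_Serafini} applied with the convex cone $C := C_\psi(x^*, \alpha)$ playing the role of $K$. By Lemma \ref{lem:properties_BP} ($1^\circ$, $2^\circ$), the Bishop-Phelps type set $C_\psi(x^*, \alpha)$ is indeed a convex cone in $Y$, so the hypotheses of Lemma \ref{lem:scalarization_Gerstewitz_Pascoletti_Serafini} are available with $C_\psi(x^*, \alpha)$ in the role of $K$.

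The key observation is that \eqref{scal_problem_GerPasSer} is nothing else than the Pascoletti-Serafini type problem \eqref{scalar_problem_Pascoletti_Serafini} rewritten with respect to $C_\psi(x^*, \alpha)$. Indeed, since $\varphi$ is $-C_\psi(x^*, \alpha)$ representing, the constraint $\varphi(f(x) - a - \lambda k) \leq 0$ is equivalent to $f(x) - a - \lambda k \in -C_\psi(x^*, \alpha)$, i.e., $f(x) \in \lambda k + (a - C_\psi(x^*, \alpha))$. Thus, the two problems have the same feasible set in $\Omega \times \mathbb{R}$ and, a fortiori, the same set of (globally) optimal pairs $(x, \lambda)$.

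With this identification at hand, assertions $1^\circ$--$4^\circ$ follow directly from the corresponding parts of Lemma \ref{lem:scalarization_Gerstewitz_Pascoletti_Serafini} applied with $C_\psi(x^*, \alpha)$ in place of $K$; note that the extra conclusion $f(\bar x) \in \bar\lambda k + (a - \bd\,C_\psi(x^*, \alpha))$ afforded by Lemma \ref{lem:scalarization_Gerstewitz_Pascoletti_Serafini} ($4^\circ$) is simply not recorded in the present statement. I do not anticipate any real obstacle: the proof is essentially a transfer via the cone representation property of $\varphi$, and the only point requiring care is to verify that the Bishop-Phelps type cone is convex, which is already contained in Lemma \ref{lem:properties_BP}.
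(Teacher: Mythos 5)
Your proposal is correct and follows essentially the same route as the paper: the paper's proof likewise observes that, since $\varphi$ is $-C_\psi(x^*, \alpha)$ representing, problem \eqref{scal_problem_GerPasSer} coincides with the Pascoletti--Serafini problem \eqref{scalar_problem_Pascoletti_Serafini} for the convex cone $C_\psi(x^*, \alpha)$, and then invokes Lemma \ref{lem:scalarization_Gerstewitz_Pascoletti_Serafini}. Your explicit check of the convexity of $C_\psi(x^*, \alpha)$ via Lemma \ref{lem:properties_BP} is a welcome (if routine) addition.
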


\begin{proof} Follows immediately from Lemma \ref{lem:scalarization_Gerstewitz_Pascoletti_Serafini} taking into account that $\varphi$ is $-C_\psi(x^*, \alpha)$ representing.
\end{proof}

Relationships between the parameterized scalar problem \eqref{scal_problem_GerPasSer} and the vector problem \eqref{vector_problem_P} are given in the next theorem.

\begin{theorem} \label{th:solution_GPS_problem_in_PWEff_new}
	Consider $\bar x \in \Omega$. Take some $(x^*, \alpha) \in Y^* \times \R_+$, $a \in Y, k \in Y \setminus\{0\}$ and assume that $\varphi$ is $-C_\psi(x^*, \alpha)$ representing and that $(\bar x, \bar  \lambda)$ is a solution of the optimization problem \eqref{scal_problem_GerPasSer}. Then:
	\begin{itemize}
		\item[$1^\circ$]
		If $(x^*, \alpha) \in K^{a\#}$ and $C_\psi(x^*, \alpha)$ is solid, then $\bar x \in {\rm PEff}_{He}(\Omega \mid f,K)$.
		\item[$2^\circ$] 
		If $(x^*, \alpha) \in K^{a\circ}$, then $\bar x \in {\rm WEff}(\Omega \mid f,K)$.
	\end{itemize}
\end{theorem}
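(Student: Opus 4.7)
The plan is to apply Lemma~\ref{lem:scalarization_Gerstewitz_Pascoletti_Serafini_BPCones}($4^\circ$) first, which translates the optimality of $(\bar x,\bar\lambda)$ for \eqref{scal_problem_GerPasSer} into the weak efficiency of $\bar x$ for the problem with $C_\psi(x^*,\alpha)$ in place of $K$. From there, both claims reduce to comparing $K$ with $C_\psi(x^*,\alpha)$ via the membership information $(x^*,\alpha)\in K^{a\circ}$ or $(x^*,\alpha)\in K^{a\#}$ collected in Lemma~\ref{lem:properties_BP}.

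For $2^\circ$, the defining property of $K^{a\circ}$ gives $\intt K\subseteq C^{>}_\psi(x^*,\alpha)\subseteq C_\psi(x^*,\alpha)$. Since $\intt K$ is an open subset of $C_\psi(x^*,\alpha)$, one gets $\intt K\subseteq \intt C_\psi(x^*,\alpha)$. Any $x\in\Omega$ with $f(x)-f(\bar x)\in -\intt K$ would therefore satisfy $f(x)-f(\bar x)\in -\intt C_\psi(x^*,\alpha)$, contradicting $\bar x\in\WEff(\Omega\mid f,C_\psi(x^*,\alpha))$. This yields $\bar x\in\WEff(\Omega\mid f,K)$.

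For $1^\circ$, Lemma~\ref{lem:properties_BP}($8^\circ$), together with the solidness of $C_\psi(x^*,\alpha)$, delivers $K\setminus\{0\}\subseteq \intt C_\psi(x^*,\alpha)$. The naive candidate $C:=C_\psi(x^*,\alpha)$ is in $\mathcal{D}(K)$ but it is too large, because weak efficiency with respect to $C_\psi(x^*,\alpha)$ need not upgrade to full efficiency with respect to the same cone. The fix is to set
$$
C := \{0\}\cup \intt C_\psi(x^*,\alpha),
$$
so that $C\setminus\{0\}=\intt C_\psi(x^*,\alpha)$ and consequently $\bar x\in \Eff(\Omega\mid f,C)$ becomes \emph{exactly} the statement $\bar x\in \WEff(\Omega\mid f,C_\psi(x^*,\alpha))$ already in hand. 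It then remains to check $C\in\mathcal{D}(K)$: convexity and the cone property follow because $\intt C_\psi(x^*,\alpha)$ is a convex cone stable under positive scalar multiplication; $C\neq\{0\}$ because $K\setminus\{0\}\subseteq \intt C_\psi(x^*,\alpha)\subseteq C$ with $K$ nontrivial; $C\neq Y$ because otherwise $\intt C_\psi(x^*,\alpha)=Y\setminus\{0\}$ would force $C_\psi(x^*,\alpha)=Y$, contradicting the fact that $x^*\in K^\#\setminus\{0\}$ attains negative values somewhere; and $K\setminus\{0\}\subseteq \intt C$ since $\intt C\supseteq \intt C_\psi(x^*,\alpha)\supseteq K\setminus\{0\}$. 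Hence $\bar x\in\PEff_{He}(\Omega\mid f,K)$.

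The one genuinely nontrivial step is the choice of the dilating cone in $1^\circ$. The scalarization lemma only produces weak efficiency with respect to $C_\psi(x^*,\alpha)$, and that output must be recast as ordinary efficiency with respect to \emph{some} cone from $\mathcal{D}(K)$. Replacing $C_\psi(x^*,\alpha)$ by $\{0\}\cup\intt C_\psi(x^*,\alpha)$ aligns the positive part of the dilating cone with the open set on which the weak efficiency information lives, and the solidness assumption on $C_\psi(x^*,\alpha)$ combined with Lemma~\ref{lem:properties_BP}($8^\circ$) guarantees that this smaller cone is still a dilating cone for $K$.
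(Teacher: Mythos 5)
Your proof is correct and follows essentially the same route as the paper: Lemma \ref{lem:scalarization_Gerstewitz_Pascoletti_Serafini_BPCones} ($4^\circ$) gives $\bar x \in \WEff(\Omega \mid f, C_\psi(x^*,\alpha))$, and then the inclusions $\intt\,K \subseteq \intt\,C_\psi(x^*,\alpha)$ (for $2^\circ$) and $C_\psi(x^*,\alpha) \in \mathcal{D}(K)$ (for $1^\circ$) finish the argument. The only difference is that you make explicit, via the auxiliary cone $\{0\}\cup\intt\,C_\psi(x^*,\alpha)$, why weak efficiency with respect to a dilating cone yields Henig proper efficiency — a step the paper delegates to Remark \ref{rem:EffBPConeSubsetWEffBPConeSubsetPEffHenig} without spelling it out.
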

\begin{proof} 
	$1^\circ$ Remark that $\bar x \in {\rm WEff}(\Omega \mid f, C_\psi(x^*, \alpha))$ in view of Lemma \ref{lem:scalarization_Gerstewitz_Pascoletti_Serafini_BPCones} ($4^\circ$).
	The latter set is contained in ${\rm PEff}_{He}(\Omega \mid f, K)$ (since $C_\psi(x^*, \alpha) \in \mathcal{D}(K)$).
	
	$2^\circ$ For the case $\intt\, K = \emptyset$ the statement is clear. Assume that $\intt\, K \neq \emptyset$.
	Again we have $\bar x \in {\rm WEff}(\Omega \mid f, C_\psi(x^*, \alpha))$. Here the latter set is contained in ${\rm WEff}(\Omega \mid f, K)$ (since $\emptyset \neq \intt\, K \subseteq C^>_\psi(x^*, \alpha) = \intt\,C_\psi(x^*, \alpha)$).
\end{proof}

Let us summarize our findings related to vector problems involving a Bishop-Phelps type cone in the next two propositions.

\begin{proposition}
	\label{prop:scal_result_Eff_BP}
	Let $(x^*, \alpha) \in Y^* \times \mathbb{R}_+$ such that  $C_\psi(x^*, \alpha) \neq \ell(C_\psi(x^*, \alpha))$ and $\varphi$ is $-C_\psi(x^*, \alpha)$ representing.  Then, for any $\bar x \in \Omega$, the following assertions are equivalent:
	\begin{itemize}
		\item[$1^\circ$] $\bar x \in {\rm Eff}(\Omega \mid f, C_\psi(x^*, \alpha))$.
		\item[$2^\circ$]  For any $k \in C_\psi(x^*, \alpha) \setminus \ell(C_\psi(x^*, \alpha))$, the set $\{(x, 0) \mid x \in \Omega, f(x) = f(\bar x)\}$ is the set of solutions of \eqref{scal_problem_GerPasSer} for $a = f(\bar x)$.
		\item[$3^\circ$] $\{x \in \Omega \mid f(x) = f(\bar x)\} = {\rm argmin}_{x \in \Omega} \; \varphi(f(x) - f(\bar x))$.
	\end{itemize}
	If there are sets $\mathcal{P} \subseteq Y$, $\mathcal{T} \subseteq C_\psi(x^*, \alpha)\setminus \ell(C_\psi(x^*, \alpha))$ and $\mathcal{R} \subseteq \mathbb{R}$ such that $f[{\rm Eff}(\Omega \mid f, C_\psi(x^*, \alpha))] \subseteq \mathcal{P} + \mathcal{R} \cdot \mathcal{T}$, then  $1^\circ-3^\circ$ are equivalent to: 
	\begin{itemize}
		\item[$4^\circ$] There are $a \in \mathcal{P}$, $k \in \mathcal{T}$ and $s \in \mathcal{R}$ so that $\{(x, s) \mid x \in \Omega, f(x) = f(\bar x)\}$ is the set of solutions of \eqref{scal_problem_GerPasSer}.
	\end{itemize}
	If further $C_\psi(x^*, \alpha)$ is closed, then $1^\circ-4^\circ$ are equivalent to: 
	\begin{itemize}
		\item[$5^\circ$]  There are $a \in \mathcal{P}$, $k \in \mathcal{T}$ and $s \in \mathcal{R}$ so that $\varphi^G_{a - C_\psi(x^*, \alpha), k}(f(\bar x)) = s$ and 
		$
		\{x \in \Omega \mid f(x) = f(\bar x)\} = {\rm argmin}_{x \in \Omega}\; \varphi^G_{a - C_\psi(x^*, \alpha), k}(f(x)).
		$
	\end{itemize}
\end{proposition}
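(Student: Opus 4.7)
The plan is to reduce the proposition to the scalarization results already established in Section \ref{sec:scalarization}, letting the Bishop-Phelps type cone $C_\psi(x^*, \alpha)$ play the role of the generic ordering cone. The central observation is that, since $\varphi$ is $-C_\psi(x^*, \alpha)$ representing, the constraint $\varphi(f(x) - a - \lambda k) \leq 0$ appearing in \eqref{scal_problem_GerPasSer} is literally equivalent to $f(x) \in \lambda k + (a - C_\psi(x^*, \alpha))$. Consequently, \eqref{scal_problem_GerPasSer} coincides with the Pascoletti-Serafini problem \eqref{scalar_problem_Pascoletti_Serafini} associated with $C_\psi(x^*, \alpha)$, so the machinery from Section \ref{sec:scalarization} becomes available. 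Recall that $C_\psi(x^*, \alpha)$ is a convex cone by Lemma \ref{lem:properties_BP}($2^\circ$), and that the hypothesis $C_\psi(x^*, \alpha) \neq \ell(C_\psi(x^*, \alpha))$ guarantees $C_\psi(x^*, \alpha) \setminus \ell(C_\psi(x^*, \alpha)) \neq \emptyset$.

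I would first derive the equivalence $1^\circ \Leftrightarrow 3^\circ$ as a direct application of Lemma \ref{lem:result_scalaization_Jahn1}($1^\circ$), with $C_\psi(x^*, \alpha)$ playing the role of $K$; the prerequisite $\varphi(0) = 0$ is guaranteed by the standing assumption at the beginning of Section \ref{sec:conic_scalarization}. Next, I would establish $1^\circ \Leftrightarrow 2^\circ$ by invoking Lemma \ref{lem:scalarization_Gerstewitz_Pascoletti_Serafini_BPCones}: its part ($1^\circ$) produces $1^\circ \Rightarrow 2^\circ$ simultaneously for every admissible $k$, while its part ($3^\circ$) yields $2^\circ \Rightarrow 1^\circ$ from the existence of at least one admissible $k$.

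To treat $1^\circ \Leftrightarrow 4^\circ$ under the covering hypothesis $f[\Eff(\Omega \mid f, C_\psi(x^*, \alpha))] \subseteq \mathcal{P} + \mathcal{R} \cdot \mathcal{T}$, I would apply Proposition \ref{prop:result_scalarization_Jahn_2_new} with cone $C_\psi(x^*, \alpha)$; assertion $2^\circ$ of that proposition is precisely $4^\circ$ here, after rewriting the membership condition of \eqref{scalar_problem_Pascoletti_Serafini} as the inequality in \eqref{scal_problem_GerPasSer} through the $-C_\psi(x^*, \alpha)$ representation of $\varphi$. Finally, $4^\circ \Leftrightarrow 5^\circ$ under the extra closedness hypothesis would follow either from assertion $3^\circ$ of Proposition \ref{prop:result_scalarization_Jahn_2_new} or directly from Lemma \ref{lem:equivalencePSproblemAndGproblem}, whose prerequisites (closed convex cone, $k \in C_\psi(x^*, \alpha) \setminus \ell(C_\psi(x^*, \alpha))$) are exactly the added closedness hypothesis combined with $\mathcal{T} \subseteq C_\psi(x^*, \alpha) \setminus \ell(C_\psi(x^*, \alpha))$.

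I do not anticipate a substantive obstacle; the proof is essentially careful bookkeeping that ports the already-proved results from a general ordering cone to the specific Bishop-Phelps type cone $C_\psi(x^*, \alpha)$, plus the one-line rewriting of $(P_\varphi^{a,k})$ as a Pascoletti-Serafini problem via the cone representation of $\varphi$. The only point requiring attention is to notice that neither pointedness nor closedness of $C_\psi(x^*, \alpha)$ is needed to prove $1^\circ$--$4^\circ$ (only convexity and the nontriviality beyond $\ell(C_\psi(x^*, \alpha))$, both of which are in hand), whereas the additional closedness assumption is activated solely for the step linking $4^\circ$ with $5^\circ$ via Lemma \ref{lem:equivalencePSproblemAndGproblem}.
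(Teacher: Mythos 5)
Your proposal is correct and follows essentially the same route as the paper's own proof: the equivalence $1^\circ \Leftrightarrow 3^\circ$ via Lemma \ref{lem:result_scalaization_Jahn1}($1^\circ$), $1^\circ \Leftrightarrow 2^\circ$ via Lemma \ref{lem:scalarization_Gerstewitz_Pascoletti_Serafini_BPCones}($1^\circ$, $3^\circ$), and $1^\circ \Leftrightarrow 4^\circ \Leftrightarrow 5^\circ$ via Proposition \ref{prop:result_scalarization_Jahn_2_new} after identifying \eqref{scal_problem_GerPasSer} with \eqref{scalar_problem_Pascoletti_Serafini} for $C_\psi(x^*,\alpha)$ in the role of $K$. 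Your added remarks on where each hypothesis is actually used are accurate and consistent with the paper.
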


\begin{proof}
	Note that $C_\psi(x^*, \alpha)$ is a convex cone, $\varphi$ is $-C_\psi(x^*, \alpha)$-representing with $\varphi(0) = 0$ and $C_\psi(x^*, \alpha) \setminus \ell(C_\psi(x^*, \alpha)) \neq \emptyset$.
	
	$1^\circ \iff 2^\circ$ The equivalence follows immediately from Lemma \ref{lem:scalarization_Gerstewitz_Pascoletti_Serafini_BPCones} ($1^\circ$, $3^\circ$).
	
	$1^\circ \iff 3^\circ$ The result follows directly from Lemma \ref{lem:result_scalaization_Jahn1} ($1^\circ$).
	
	$1^\circ \iff 4^\circ \iff 5^\circ$ The result follows from Proposition \ref{prop:result_scalarization_Jahn_2_new} taking into account that \eqref{scal_problem_GerPasSer} is the same problem as \eqref{scalar_problem_Pascoletti_Serafini} with $C_\psi(x^*, \alpha)$ in the role of $K$.	
\end{proof}

\begin{proposition} \label{prop:scal_result_WEff_BP}
	Consider $(x^*, \alpha) \in Y^* \times \mathbb{R}_+$ such that $\intt\,C_\psi(x^*, \alpha) \neq \emptyset$ and $\varphi$ is $-C_\psi(x^*, \alpha)$ representing. Then, for any $\bar x \in \Omega$, the following assertions are equivalent:
	\begin{itemize}
		\item[$1^\circ$] $\bar x \in {\rm WEff}(\Omega \mid f, C_\psi(x^*, \alpha))$.
		\item[$2^\circ$]  For any $k \in \intt\,C_\psi(x^*, \alpha)$, the point $(\bar x, 0)$ is a solution of  the problem \eqref{scal_problem_GerPasSer} for $a = f(\bar x)$.
	\end{itemize}
	\noindent If $\varphi$ is strictly $-C_\psi(x^*, \alpha)$ representing (e.g., if $\varphi := \varphi_{x^*, \alpha}$ and $C^>_\psi(x^*, \alpha) \neq \emptyset$), then $1^\circ$ and $2^\circ$ are equivalent to:
	\begin{itemize}
		\item[$3^\circ$]  $\bar x \in {\rm argmin}_{x \in \Omega} \; \varphi(f(x) - f(\bar x))$.
	\end{itemize}
	\noindent If there are sets $\mathcal{P} \subseteq Y$, $\mathcal{T} \subseteq \intt\,C_\psi(x^*, \alpha)$ and $\mathcal{R} \subseteq \mathbb{R}$ such that $f[{\rm WEff}(\Omega \mid f, C_\psi(x^*, \alpha))] \subseteq \mathcal{P} + \mathcal{R}  \cdot \mathcal{T}$, then $1^\circ$ and $2^\circ$ are equivalent to: 
	\begin{itemize}
		\item[$4^\circ$]  There are $a \in \mathcal{P}$, $k \in \mathcal{T}$ and $s \in \mathcal{R}$ so that $(\bar x, s)$ is a solution of \eqref{scal_problem_GerPasSer}.
	\end{itemize}
	\noindent If further $C_\psi(x^*, \alpha)$ is closed and nontrivial, then $1^\circ, 2^\circ$ and $4^\circ$ are equivalent to: 
	\begin{itemize}
		\item[$5^\circ$]  There are $a \in \mathcal{P}$, $k \in \mathcal{T}$ and $s \in \mathcal{R}$ so that 
		$
		s = \varphi^G_{a - C_\psi(x^*, \alpha), k}(f(\bar x)) = {\rm min}_{x \in \Omega}\; \varphi^G_{a - C_\psi(x^*, \alpha), k}(f(x)).
		$
	\end{itemize}
\end{proposition}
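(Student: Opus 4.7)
The plan is to derive each of the equivalences by specializing earlier general results to the case where the abstract cone is the Bishop-Phelps type cone $C_\psi(x^*, \alpha)$. Recall that $\varphi$ is sublinear with $\varphi(0) = 0$ by the setup at the top of Section~\ref{sec:conic_scalarization}, and under our assumption $\intt\,C_\psi(x^*, \alpha) \neq \emptyset$ we have $C_\psi(x^*, \alpha) \neq Y$ in the ``closed and nontrivial'' case of $5^\circ$ (needed when invoking Proposition~\ref{prop:result_scalarization_Jahn_3_new}).

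For the equivalence $1^\circ \Longleftrightarrow 2^\circ$, I would apply Lemma~\ref{lem:scalarization_Gerstewitz_Pascoletti_Serafini_BPCones} with $C_\psi(x^*, \alpha)$ in the role of the cone. The implication $1^\circ \Longrightarrow 2^\circ$ is exactly item $(2^\circ)$ of that lemma, valid for any $k \in \intt\,C_\psi(x^*, \alpha)$ (nonempty by hypothesis). The reverse implication $2^\circ \Longrightarrow 1^\circ$ follows from item $(4^\circ)$ of the same lemma, since a solution of \eqref{scal_problem_GerPasSer} is in particular produced by some $k$.

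For the equivalence with $3^\circ$ under strict $-C_\psi(x^*, \alpha)$ representation of $\varphi$, I would invoke Lemma~\ref{lem:result_scalaization_Jahn1}~$(2^\circ)$ with $C_\psi(x^*, \alpha)$ in place of $K$; the hypotheses $\varphi(0) = 0$ and strict cone representation are exactly what is needed. To justify the parenthetical ``e.g.'' clause, I would invoke Lemma~\ref{lem:properties_BP_fcns}~$(4^\circ)$, which states precisely that $\varphi_{x^*, \alpha}$ is strictly $-C_\psi(x^*, \alpha)$ representing provided both $C^>_\psi(x^*, \alpha) \neq \emptyset$ and $\intt\,C_\psi(x^*, \alpha) \neq \emptyset$ — the latter holding by our standing assumption.

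For the equivalence $1^\circ \Longleftrightarrow 4^\circ$ under the assumption on $\mathcal{P}, \mathcal{T}, \mathcal{R}$, I would apply Proposition~\ref{prop:result_scalarization_Jahn_3_new} with $C_\psi(x^*, \alpha)$ in the role of $K$. The convexity of $C_\psi(x^*, \alpha)$ follows from Lemma~\ref{lem:properties_BP}~$(2^\circ)$, and the inclusion $\mathcal{T} \subseteq \intt\,C_\psi(x^*, \alpha)$ together with the covering condition $f[{\rm WEff}(\Omega \mid f, C_\psi(x^*, \alpha))] \subseteq \mathcal{P} + \mathcal{R} \cdot \mathcal{T}$ transfer verbatim. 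Finally, for $4^\circ \Longleftrightarrow 5^\circ$ under closedness and nontriviality of $C_\psi(x^*, \alpha)$, I would apply Lemma~\ref{lem:equivalencePSproblemAndGproblem} (again with $C_\psi(x^*, \alpha)$ in place of $K$), which ensures the Pascoletti–Serafini type problem \eqref{scal_problem_GerPasSer} and the Gerstewitz scalarization share solutions and optimal values — noting that $k \in \mathcal{T} \subseteq \intt\,C_\psi(x^*, \alpha) \subseteq C_\psi(x^*, \alpha) \setminus \ell(C_\psi(x^*, \alpha))$ (since $\intt\,K \cap \ell(K) = \emptyset$ for any nontrivial convex cone $K$, as recalled in Section~\ref{sec:preliminaries}), which is the direction hypothesis required by Lemma~\ref{lem:equivalencePSproblemAndGproblem}. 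There is no real obstacle here — the main care goes into checking that at each step the hypotheses on the abstract cone translate correctly to the Bishop-Phelps type cone via Lemma~\ref{lem:properties_BP}.
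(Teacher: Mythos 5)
Your proposal is correct and follows essentially the same route as the paper's proof: $1^\circ\iff 2^\circ$ via Lemma \ref{lem:scalarization_Gerstewitz_Pascoletti_Serafini_BPCones} ($2^\circ$, $4^\circ$), $1^\circ\iff 3^\circ$ via Lemma \ref{lem:result_scalaization_Jahn1} ($2^\circ$) together with Lemma \ref{lem:properties_BP_fcns} ($4^\circ$), and $1^\circ\iff 4^\circ\iff 5^\circ$ via Proposition \ref{prop:result_scalarization_Jahn_3_new} (whose $2^\circ\iff 3^\circ$ part is exactly the appeal to Lemma \ref{lem:equivalencePSproblemAndGproblem} you make explicitly). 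The only difference is that you unfold the $4^\circ\iff 5^\circ$ step directly, which the paper leaves implicit inside Proposition \ref{prop:result_scalarization_Jahn_3_new}.
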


\begin{proof}
	First, note that $C_\psi(x^*, \alpha)$ is a solid, convex cone and $\varphi$ is $-C_\psi(x^*, \alpha)$-representing with $\varphi(0) = 0$.
	
	$1^\circ \iff 2^\circ$ The equivalence follows immediately from Lemma \ref{lem:scalarization_Gerstewitz_Pascoletti_Serafini_BPCones} ($2^\circ$, $4^\circ$).
	
	$1^\circ \iff 3^\circ$  The result follows from Lemma \ref{lem:result_scalaization_Jahn1} ($2^\circ$). 
	
	$1^\circ \iff 4^\circ \iff 5^\circ$  The result follows directly from the Proposition \ref{prop:result_scalarization_Jahn_3_new}.
\end{proof}

\begin{remark} 
	The results derived in Theorem \ref{th:argminPhiCircfisPWEff} (combined with Lemma \ref{lem:monotonicity_normlinear_fcn_BP}) and Propositions \ref{prop:scal_result_Eff_BP} and \ref{prop:scal_result_WEff_BP} for the case $\varphi = \varphi_{x^*, \alpha}$ extend some known results in the literature, which are given in a normed space $(Y, \psi)$, $\psi(\cdot) :=||\cdot||$, preordered by a convex cone $K \subseteq Y$:
	\begin{itemize}
		\item Gasimov \cite[Th. 1]{Gasimov} proved a corresponding statement as in Theorem \ref{th:argminPhiCircfisPWEff} ($4^\circ$) for Benson proper efficiency (instead of Henig proper efficiency).
		\item Kasimbeyli \cite[Th. 5.4]{Kasimbeyli2013} stated similar assertions as in Theorem \ref{th:argminPhiCircfisPWEff} for a closed, pointed, convex cone $K$ and Benson proper efficiency (instead of Henig proper efficiency).
		\item Gasimov \cite[Th. 2]{Gasimov} (see also Guo and Zhang \cite[Th. 4.5]{GuoZhang2017}) stated that $\bar x \in {\rm PEff}_\mathbb{A}(\Omega \mid f, C_{||\cdot||}(x^*, \alpha))$ (with $\mathbb{A}$ considered in Benson proper efficiency) implies $\{x \in \Omega \mid f(x) = f(\bar x)\} = {\rm argmin}_{x \in \Omega} \; \varphi_{x^*, \alpha}(f(x) - f(\bar x))$.
	\end{itemize}
\end{remark}

\begin{remark} \label{rem:EffBPConeSubsetWEffBPConeSubsetPEffHenig}
	Assume that $K$ is a nontrivial cone and $(x^*, \alpha) \in K^{a\#}$. Then, $K \setminus \{0\} \subseteq C_\psi^>(x^*, \alpha) \subseteq C_\psi(x^*, \alpha) \setminus \{0\}$, hence
	$\Eff(\Omega \mid f, C_\psi(x^*, \alpha)) \subseteq {\rm Eff}(\Omega \mid f,K)$.  
	If further $K$ has the normlike-base $B_K(\psi)$, and $C_\psi(x^*, \alpha)$ is solid (e.g. if $K$ is solid or $\psi$ is continuous), then $C_\psi(x^*, \alpha)$ is a nontrivial, convex cone with $K \setminus \{0\} \subseteq C_\psi^>(x^*, \alpha) = \intt\, C_\psi(x^*, \alpha)$, which means that $C_\psi(x^*, \alpha) \in \mathcal{D}(K)$, and so  
	$\Eff(\Omega \mid f, C_\psi(x^*, \alpha)) \subseteq \WEff(\Omega \mid f, C_\psi(x^*, \alpha)) \subseteq {\rm PEff}_{He}(\Omega \mid f,K)$.
	
	If $K$ is a nontrivial, solid cone and $(x^*, \alpha) \in K^{a\circ}$, then $\intt\,K \subseteq C_\psi^>(x^*, \alpha) = \intt\, C_\psi(x^*, \alpha)$, hence
	$\WEff(\Omega \mid f, C_\psi(x^*, \alpha)) \subseteq {\rm WEff}(\Omega \mid f,K)$.  
\end{remark}

\subsection{Nonlinear Cone Separation Conditions} \label{sec:cone_separation}

The key tool to prove our main Bishop-Phelps type scalarization results in vector optimization (see Sections \ref{sec:scalarization_WEff}, \ref{sec:scalarization_PEff_A} and \ref{sec:scalarization_PEff_Henig}) will be the separation of two (not necessarily convex) cones by using Bishop-Phelps type separating cones / separating (seminorm-linear) functions. 

Consider a real topological-linear space $Y$ and a seminorm $\psi: Y \to \R$. For a cone $C \subseteq Y$, we use the notations:
$$
S_C(\psi) := \conv(B_C(\psi)) \quad \text{and} \quad S_C^0(\psi) := \conv(\{0\} \cup B_C(\psi)),
$$
where $B_{C}(\psi) = C \cap \uS_{\psi}$ (as defined in Definition \ref{def:normlikebase}).
If the underlying seminorm (norm) $\psi$ is clear in the context, we will simply use $S_C$ and $S_C^0$ instead of $S_C(\psi)$ and $S_C^0(\psi)$.

Throughout this section, we consider a nontrivial cone $K \subseteq Y$ with the normlike-base $B_K = B_K(\psi)$, and a nontrivial cone $A \subseteq Y$ with the normlike-base $B_A = B_A(\psi)$. 
\begin{remark}
    It is important to mention that this normlike-base property
    is essential for utilizing Bishop-Phelps type cone separation results (presented in this Section \ref{sec:cone_separation}) and Bishop-Phelps type scalarization results (presented in Sections \ref{sec:scalarization_WEff}, \ref{sec:scalarization_PEff_A} and \ref{sec:scalarization_PEff_Henig}).
    In fact, this normlike-base property asserts that the cone $K \cup A$ has the normlike-base $B_{K \cup A} = B_{K \cup A}(\psi)$ (respectively, a norm-base if $K \cup A = Y$), i.e., the seminorm $\psi$ is positive on $(K \cup A) \setminus \{0\}$.     
    Our upcoming Remark \ref{rem:normlike-base-property} in Section \ref{sec:scalarization_WEff} provides additional information about the normlike-base property for cones and the construction of adequate seminorms.
\end{remark}

The following conditions will be of interest in our upcoming auxiliary results:
\begin{align}
	&(\cl\,S_{A}^0(\psi)) \cap (\cl\, S_{-K}(\psi))  = \emptyset, \label{eq:sep_condition_1}\\
	&A \cap (\cl(\conv(-K))  = \{0\},  \label{eq:sep_condition_2}\\
	&0  \notin \cl\, S_{-K}(\psi).  \label{eq:sep_condition_3}
\end{align}

\begin{remark}
    We would like to point out that a separation condition for two (not necessarily convex) cones, 
    \begin{equation}
    \label{eq:sep_cond_Kasimbeyli}
       (\cl\,S_{{\rm bd}\,A}^0(\psi)) \cap (\cl\, S_{-K}(\psi))  = \emptyset, 
    \end{equation}
    which is a similar condition to the one in \eqref{eq:sep_condition_1} (more precisely, $A$ is replaced by ${\rm bd}\,A$),     
    was initially studied by 
    Kasimbeyli \cite[Def. 4.1, Th. 4.3]{Kasimbeyli2010}, \cite{, Kasimbeyli2013} and more recently by Garc\'{i}a-Casta\~{n}o, Melguizo-Padial and Parzanese \cite{CastanoEtAl2023} in the
    context of cone separation based on Bishop-Phelps separating cones / functions in real normed spaces (with $\psi$ given by the underlying norm of the space). Moreover, in the same framework, 
    G\"unther, Khazayel and Tammer \cite{GuenKhaTam23b} studied the separation condition \eqref{eq:sep_condition_1} and compared their results with those results (from \cite{Kasimbeyli2010} and \cite{CastanoEtAl2023}) obtained for the separation condition \eqref{eq:sep_cond_Kasimbeyli}.
    In \cite{GuenKhaTam23a}, the study of the separation condition \eqref{eq:sep_condition_1} and cone separation  based on Bishop–Phelps type separating cones / functions is conducted within the more general framework of real topological-linear spaces.
\end{remark}

Some relationships between the conditions \eqref{eq:sep_condition_1}, \eqref{eq:sep_condition_2} and \eqref{eq:sep_condition_3} are studied in \cite[Sec. 4 and 5]{GuenKhaTam23b} and \cite[Sec. 5]{GuenKhaTam23a}.
The following proposition, which provides new assumptions for the validity of the implication 
$[\eqref{eq:sep_condition_2} \text{ and } \eqref{eq:sep_condition_3}] \Longrightarrow   \eqref{eq:sep_condition_1},$
will later play an important role in proving a Bishop-Phelps type scalarization result (namely Corollary \ref{cor:main_scalarization_result_PEff_A_BP}) for vector optimization problems in real locally convex spaces.

\begin{proposition}
	\label{prop:separation_conditions_1}
    Assume that $Y$ is a real topological-linear space, $K, A \subseteq Y$ are nontrivial cones with normlike-bases $B_{K}(\psi)$ and $B_{A}(\psi)$. Moreover, suppose that \eqref{eq:sep_condition_2}  and \eqref{eq:sep_condition_3} are valid, and one of the following statements is satisfied:
	\begin{enumerate}
		\item[a)] $A$ is closed and convex.
		\item[b)] There is $(x^*, \alpha) \in Y^* \times \Rpos$ such that 
		\begin{align}
			\cl\, S_{-K}(\psi) & = \{x \in \cl\,\uB_{\psi} \mid x^*(x) \geq \alpha \} \label{eq:sep_condition_9}
		\end{align}
		is valid, and one of the following conditions is satisfied:
		\begin{align}
			&  B_{A}(\psi) \text{ is compact}, \label{ass:com1}\\
			&  S_{A}^0(\psi) \text{ is closed}, \label{ass:com2}\\
			&  A \text{ is closed and } \cl\,B_A(\psi) \text{ is compact}, \label{ass:com3}\\
			&  A \text{ is closed and } \conv(\cl(B_{A}(\psi)) \cup \{0\}) \text{ is closed}. \label{ass:com4}
		\end{align}
	\end{enumerate}
	Then, \eqref{eq:sep_condition_1} is valid.
\end{proposition}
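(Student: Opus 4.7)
The plan is to argue by contradiction in both cases: assume there exists some $x \in \cl\,S_A^0(\psi) \cap \cl\,S_{-K}(\psi)$; condition \eqref{eq:sep_condition_3} immediately gives $x \neq 0$, and I aim to derive that $x \in A \cap \cl(\conv(-K))$, contradicting \eqref{eq:sep_condition_2}. The inclusion $x \in \cl(\conv(-K))$ comes for free: since $B_{-K}(\psi) \subseteq -K$ one has $S_{-K}(\psi) = \conv(B_{-K}(\psi)) \subseteq \conv(-K)$, and closure preserves this inclusion. The whole content of the proposition is therefore the claim $x \in A$.

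Part (a) is straightforward. From $\{0\} \cup B_A \subseteq A$ and the convexity of $A$ one obtains $S_A^0 = \conv(\{0\} \cup B_A) \subseteq A$, and then closedness of $A$ yields $\cl\,S_A^0 \subseteq A$, whence $x \in A$ as required.

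Part (b) is the substantial one. The representation \eqref{eq:sep_condition_9} supplies quantitative information about $x$: from $x \in \cl\,S_{-K}(\psi)$ I read off $\psi(x) \leq 1$ and $x^*(x) \geq \alpha > 0$, so $x$ is bounded away from the origin by the continuous functional $x^*$. I would then pick a net $x_\nu = t_\nu z_\nu \to x$ with $t_\nu \in [0,1]$ and $z_\nu \in \conv B_A$ (the generic form of an element of $S_A^0$), pass to subnets so that $t_\nu \to t \in [0,1]$, and observe that $x^*(x_\nu) = t_\nu x^*(z_\nu) \to x^*(x) \geq \alpha > 0$ together with $\psi(z_\nu) \leq 1$ forces $t > 0$ and prevents $z_\nu$ from drifting to a degenerate limit. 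Each of the four sub-hypotheses is tailored to promote the directional net $z_\nu$ to a convergent one whose limit lies in $A$: compactness of $B_A$ (respectively of $\cl\,B_A$) in \eqref{ass:com1} and \eqref{ass:com3} produces a cluster direction in $\cl\,B_A$, and closedness of $A$ in \eqref{ass:com3} (or the direct inclusion $B_A \subseteq A$ in \eqref{ass:com1}) puts this direction in $A$; under \eqref{ass:com2}, $\cl\,S_A^0 = S_A^0$ gives $x \in S_A^0$ outright, and the positivity of $x^*(x)$ reduces the situation to the compactness case; under \eqref{ass:com4}, closedness of $\conv(\cl(B_A) \cup \{0\})$ replaces compactness by a closure-under-capped-convex-hull property, and closedness of $A$ again transports the limit back into $A$. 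In all four sub-cases $x = t z \in \Rpos \cdot A = A$, and then \eqref{eq:sep_condition_2} yields $x \in A \cap \cl(\conv(-K)) = \{0\}$, contradicting $x \neq 0$.

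The main technical obstacle, as the sketch above indicates, is the limit-extraction argument in part (b): convex hulls of compact sets need not be closed in a general topological-linear space, so one cannot hope to conclude $x \in A$ from $x \in \cl\,S_A^0$ by softness alone. It is precisely the explicit half-ball description \eqref{eq:sep_condition_9} of $\cl\,S_{-K}(\psi)$ that rules out "scaling-to-zero" pathologies by guaranteeing $x^*(x) \geq \alpha > 0$, which in turn makes each of the four alternative compactness / closedness conditions on the base (or on $A$) genuinely sufficient. Splitting cleanly into the four sub-cases and handling them with nets (not sequences) is where the most care is needed.
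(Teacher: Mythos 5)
Your argument for part a) is correct and coincides with the paper's: \eqref{eq:sep_condition_2} gives $\cl\,S_A^0 \cap \cl\,S_{-K} \subseteq A \cap \cl(\conv(-K)) = \{0\}$, and \eqref{eq:sep_condition_3} removes the origin.

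For part b), however, your strategy has a genuine gap. You aim to show that any $x \in \cl\,S_A^0(\psi) \cap \cl\,S_{-K}(\psi)$ belongs to $A$, so as to contradict \eqref{eq:sep_condition_2}. But in case b) the cone $A$ is \emph{not} assumed convex, whereas $S_A^0 = \conv(\{0\} \cup B_A)$ is a convex hull: it generically contains points that lie in no ray of $A$ (take $A$ a union of two rays in the plane; $S_A^0$ is a triangle whose interior misses $A$ entirely). So the target ``$x = tz \in \Rpos \cdot A = A$'' is unreachable in general — already in sub-case \eqref{ass:com2}, where $x \in S_A^0$ outright, there is no way to place $x$ in $A$. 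A second, related problem is the limit extraction itself: your net $z_\nu$ lives in $\conv B_A$, not in $B_A$, so compactness of $B_A$ (or of $\cl\,B_A$) does not furnish a cluster point of $(z_\nu)$; the convex hull of a compact set need not be relatively compact in a general topological-linear space, and writing $z_\nu$ as convex combinations of base points does not help since the number of terms is unbounded.

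The way out — and this is what the paper does — is to never try to return to $A$ after taking convex hulls, but instead to let a closed convex \emph{sublevel set of $x^*$} absorb the hull. From \eqref{eq:sep_condition_2}, \eqref{eq:sep_condition_3} one gets $A \cap \cl\,S_{-K} = \emptyset$, which by \eqref{eq:sep_condition_9} means $x^*(y) < \alpha$ for all $y \in A \cap \cl\,\uB_\psi$, in particular on $B_A \cup \{0\}$ (and on $\cl\,B_A \cup \{0\}$ when $A$ is closed). Under \eqref{ass:com1} or \eqref{ass:com3}, compactness upgrades this to a uniform bound $x^* \leq \beta < \alpha$ on the base; since $\{y \in \cl\,\uB_\psi \mid x^*(y) \leq \beta\}$ is closed and convex, it contains $\cl\,S_A^0$, which is therefore disjoint from $\cl\,S_{-K} = \{y \in \cl\,\uB_\psi \mid x^*(y) \geq \alpha\}$. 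Under \eqref{ass:com2} and \eqref{ass:com4} the closedness hypotheses make $\cl\,S_A^0$ itself a convex set on which $x^* < \alpha$ pointwise, and the same disjointness follows. Note that in this argument the quantitative information $x^*(x) \geq \alpha$ that you extracted is used on the \emph{whole set} $\cl\,S_{-K}$ rather than on a single contradiction point.
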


\begin{proof} 
	a) Assuming $A$ is closed and convex, by \eqref{eq:sep_condition_2} we get $\cl\,S_A^0\cap \cl\, S_{-K} \subseteq A \cap \cl(\conv(-K)) = \{0\}$, and so by \eqref{eq:sep_condition_3} we conclude \eqref{eq:sep_condition_1}.
	
	b) Obviously, \eqref{eq:sep_condition_2} and \eqref{eq:sep_condition_3} imply $A \cap \cl\, S_{-K} = \emptyset$. 
	Since there is $(x^*, \alpha) \in Y^* \times \Rpos$ with \eqref{eq:sep_condition_9}, we get that $x^*(x) < \alpha$ for all $x \in   A \cap (\cl\,\uB_{\psi})$. Note that $B_{A} \cup \{0\} \subseteq  A \cap \cl\,\uB_{\psi}$, and if $A$ is closed, then $\cl\,B_{A} \cup \{0\} \subseteq A \cap \cl\,\uB_{\psi}$. Consider cases:
	\par 
	Case 1: \eqref{ass:com1} is valid. By the continuity of $x^*$ and the compactness of $B_{A} \cup \{0\}$, we get some $\beta \in (0, \alpha)$ such that $B_{A} \cup \{0\} \subseteq \{x \in \cl\,\uB_{\psi} \mid x^*(x) \leq \beta \} \subseteq \{x \in \cl\,\uB_{\psi} \mid x^*(x) < \alpha \}$. Hence, $\cl\,S_A^0 \subseteq \{x \in \cl\,\uB_{\psi} \mid x^*(x) \leq \beta \}$.
	\par 
	Case 2: \eqref{ass:com2} is valid. Then, 
	$\cl\,S_A^0 = S_A^0 \subseteq \{x \in \cl\,\uB_{\psi} \mid x^*(x) < \alpha \}$.
	\par
	Case 3: \eqref{ass:com3} is valid. By the continuity of $x^*$ and the compactness of $\cl\,B_{A} \cup \{0\}$ we get some $\beta \in (0, \alpha)$ such that $\cl\,B_{A} \cup \{0\} \subseteq \{x \in \cl\,\uB_{\psi} \mid x^*(x) \leq \beta \} \subseteq \{x \in \cl\,\uB_{\psi} \mid x^*(x) < \alpha \}$. Thus, $\cl\,S_A^0 \subseteq \{x \in \cl\,\uB_{\psi} \mid x^*(x) \leq \beta \}$.
	\par
	Case 4: \eqref{ass:com4} is valid. Then, $\cl\,S_A^0 \subseteq \cl(\conv(\cl(B_{A}) \cup \{0\})) = \conv(\cl(B_{A}) \cup \{0\}) \subseteq \{x \in \cl\,\uB_{\psi} \mid x^*(x) < \alpha \}$.
	\par
	Recalling that \eqref{eq:sep_condition_9} is valid, we conclude \eqref{eq:sep_condition_1}.   
\end{proof}

\begin{remark} \label{rem:examples}
    Note that the equality ``$=$'' in \eqref{eq:sep_condition_9} of Proposition \ref{prop:separation_conditions_1} is essential and can not be replaced by ``$\subseteq$''. On the left-hand side of Figure \ref{fig:conditions_BPtypecone}, conditions \eqref{eq:sep_condition_2}, \eqref{eq:sep_condition_3}, \eqref{eq:sep_condition_9}, \eqref{ass:com1} are valid and the expected conclusion given by \eqref{eq:sep_condition_1} holds.
    On the right-hand side of the Figure \ref{fig:conditions_BPtypecone}, the conditions \eqref{eq:sep_condition_2}, \eqref{eq:sep_condition_3}, \eqref{ass:com1} are valid, there is no $(x^*, \alpha) \in Y^* \times \Rpos$ such that condition \eqref{eq:sep_condition_9} is  valid, but there is a pair $(x^*, \alpha) \in Y^* \times \Rpos$ such that
    $$\cl\, S_{-K}(\psi) \subsetneq \{x \in \cl\,\uB_{\psi} \mid x^*(x) \geq \alpha \}$$ holds. In this case, the conclusion given by \eqref{eq:sep_condition_1} fails. 
    This example encourages studying equivalent norms in the contexts of cone separation (see the results in this section) and scalarization (see Section \ref{sec:scalarization_PEff_A}).

    \begin{figure}[h!]
    \resizebox{1\hsize}{!}{
    \includegraphics{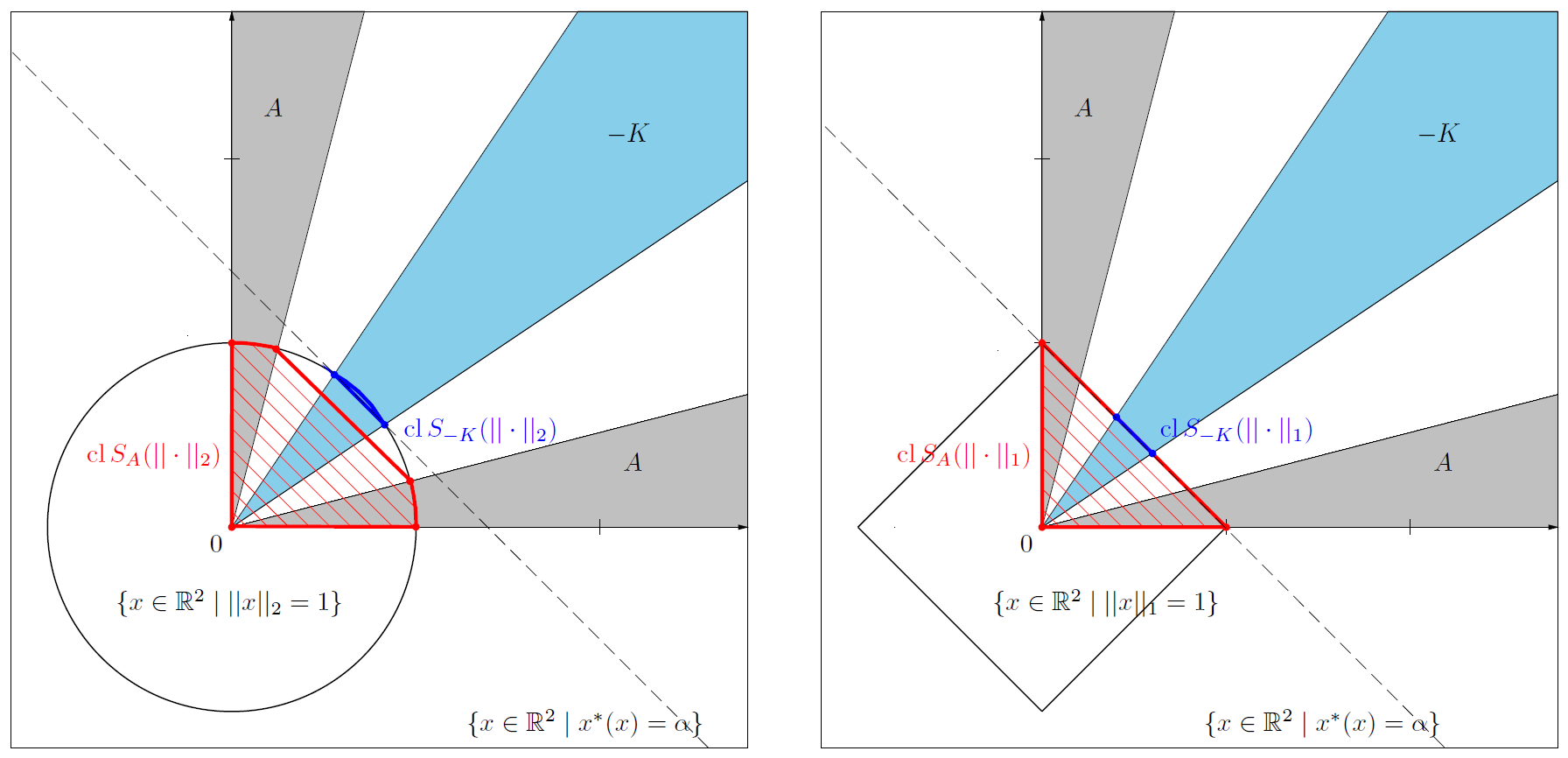}
    }
    \caption{Examples given in Remark \ref{rem:examples}.}
    \label{fig:conditions_BPtypecone}
    \end{figure}
\end{remark}

\begin{remark}
The result in Proposition \ref{prop:separation_conditions_1} (regarding assumption b)) is a generalization of a result by Kasimbeyli \cite[Lem. 3]{Kasimbeyli2013}, where the author studied the Euclidean space $Y = \R^n$, a closed cone $A \subseteq Y$ and a closed convex cone $K \subseteq Y$ with $A \cap (-K) = \{0\}$. Moreover, it is assumed that there is $(x^*, \alpha) \in Y^* \times \Rpos$ such that \eqref{eq:sep_condition_9} is valid (hence $0 \notin \cl\, S_{-K}$ is valid). The conclusion in \cite[Lem. 3]{Kasimbeyli2013} is given as the separation condition \eqref{eq:sep_cond_Kasimbeyli} (i.e., \eqref{eq:sep_condition_1} with $\bd\, A$ in the role of $A$).  
\end{remark}

The condition \eqref{eq:sep_condition_9} involved in b) of Proposition \ref{prop:separation_conditions_1} can be fulfilled by certain Bishop-Phelps type cones, as the following lemma shows.

\begin{lemma} \label{lem:BPcone}  Consider a real topological-linear space $Y$, a continuous seminorm $\psi: Y \to \mathbb{R}$, a nontrivial closed convex cone $K \subseteq Y$ and $(x^*, \alpha) \in Y^* \times \Rpos$. Define $C := C_{\psi}(x^*, \alpha)$. Then, the following assertions are valid:
\begin{itemize}
    \item[$1^{\circ}$] If $K = C$, then 
    $$\cl\, B_{K}(\psi)  = B_{K}(\psi)  =  \{x \in \uS_{\psi}\mid x^*(x) \geq \alpha\},$$
    and if
    \begin{equation}
        \label{eq:BPconeAssumption}
        \{e \in Y \setminus \{0\} \mid \psi(e) = 1, x^*(e) = 0\} \neq \emptyset
    \end{equation}
    (e.g., if $Y$ has dimension two or higher, and $K$ is pointed (or equivalently, $\max\{|x^*|, \psi\}$ is a norm)), 
    then
    \begin{equation}
        \label{eq:BPconeClSC}
        \cl\, S_{K}(\psi)  = S_{K}(\psi) = \{x \in \uB_{\psi}\mid x^*(x) \geq \alpha \}.
    \end{equation}
    
    \item[$2^{\circ}$] If \eqref{eq:BPconeClSC} is valid, and $B_K(\psi)$ is a normlike-base for $K$ (e.g. if $\psi$ is a norm), then $K \subseteq C$ and $S_{K}(\psi)$ is a topological base for $K$, and if $B_C(\psi)$ is a normlike-base for $C$ (e.g. if $\psi$ is a norm), then $K = C$.
\end{itemize}
\end{lemma}

\begin{proof}
    $1^\circ$ The equality $B_{K} = \{x \in \uS_{\psi}\mid x^*(x) \geq \alpha \}$ is obvious. Since $\psi$ is continuous, the sets $C$ and $\uS_{\psi}$ are closed, hence $B_{K}$ is closed. 
	Define  $D := \{x \in \uB_{\psi} \mid x^*(x) \geq \alpha \}$.
	Since $D$ is a convex set with $B_K \subseteq D$, the inclusion $S_K \subseteq D$ is clear. 
	
	Let us show that $S_K \supseteq D$. Take some $d \in D$. We consider two cases:
	\par
	{\rm Case 1}: Assume that $\psi(d) = 1$. Then, $d \in B_K \subseteq S_K$.
	\par
	{\rm Case 2}: Assume that $\psi(d) < 1$.  Put $\gamma := x^*(d)\, (\geq \alpha)$. By our assumption \eqref{eq:BPconeAssumption}, there is $e \in Y \setminus \{0\}$ with $\psi(e) = 1$ and $x^*(e) = 0$. Then, 
	$x^*(d \pm t e) = x^*(d) \pm t x^*(e) = \gamma$ for all $t \geq 0$, 
	and $\psi(d \pm t e) \geq |\,\psi(d) - \psi(t e)\,| = |\,\psi(d) - |t|\,| \to \infty$ for $t \to \infty$.
	Taking into account the continuity of $t \mapsto h_+(t) := \psi(d + t e)$ and $t \mapsto h_-(t) := \psi(d - t e)$, as well as the  conditions $h_+(0) = h_-(0) = \psi(d) < 1$, $\lim_{t \to \infty}\,h_+(t) = \lim_{t \to \infty}\,h_-(t) = \infty$, by the well-known intermediate value theorem there are $t_+, t_- \in \Rpos$ with $h_+(t_+) = h_-(t_-) = 1$. Since $x^*(d + t_+ e) = x^*(d - t_- e) = \gamma \geq \alpha$ we infer $d - t_- e \in B_K$ and $d + t_+ e \in B_K$, hence 
	$d \in \conv(\{d - t_- e,\, d + t_+ e\}) \subseteq S_K.$
	
	Of course, $S_K = D$ and the closedness of $D$ imply $\cl\, S_K = S_K$.

    Note that, in view of Lemma \ref{lem:properties_BP} ($4^\circ$), if $Y$ has dimension two or higher, then $K$ is pointed $\Longleftrightarrow$ $\max\{|x^*|, \psi\}$ is a norm $\Longleftrightarrow$ $\{e \in Y \setminus \{0\} \mid \psi(e) = 0, x^*(e) = 0\} = \emptyset$ $\Longrightarrow$ \eqref{eq:BPconeAssumption} is valid.

    $2^\circ$ Suppose that \eqref{eq:BPconeClSC} is valid. We have
     $B_{K} \subseteq S_{K} = \{x \in \uB_{\psi}\mid x^*(x) \geq \alpha \} \subseteq C_{\psi}(x^*, \alpha) = C$, 
     hence $K = \mathbb{R}_+ \cdot B_{K} \subseteq C$. It is easy to check that $K = \mathbb{R}_+ \cdot B_{K} = \mathbb{R}_+ \cdot S_{K}$, and from \eqref{eq:BPconeClSC} we get $0 \notin \cl\,S_{K}$, hence $S_{K}$ is a topological base for $K$. It remains to show that $K \supseteq C$, if $B_C$ is a normlike-base for $C$. Take some $x \in C$. The assertion is immediate for $x = 0$. Let $x$ belong to $C \setminus \{0\}$. Then there exist $\beta > 0$ and $b \in B_{C}$ such that $x = \beta b$ (since $B_C$ is a normlike-base for $C$), hence
     $
     \beta x^*(b)  = x^*(x) \geq \alpha \psi(x) =  \alpha \beta.
     $
     From this we get that
     $b \in S_{K} \subseteq K$, and so $x = \beta b \in K$. 
     We conclude that $K = C$.  
    \end{proof}

\begin{remark} \label{rem:Representation_Theorem}
	Kasimbeyli and Kasimbeyli \cite[Th. 4.5]{KasKas17} stated a similar result as in Lemma \ref{lem:BPcone} ($1^\circ$) in a reflexive Banach space setting (with $\psi(\cdot) := ||\cdot||$)  and as in Lemma \ref{lem:BPcone} ($2^\circ$) in a normed space setting. Note that the assumption given in \eqref{eq:BPconeAssumption}, which in this setting means that $Y$ has dimension two or higher, is essential. In the one dimensional space ($Y := \mathbb{R}$, $|\cdot|$), the condition  \eqref{eq:BPconeAssumption} fails  and the conclusion of Lemma \ref{lem:BPcone} ($1^\circ$) may fail. To see this, consider $Y := \mathbb{R}$, $x^* := 1$ and $\alpha \in (0,1]$. Then, $C = C(x^*, \alpha) = \mathbb{R}_+$ and $B_{C}(|\cdot|) = \{1\}$ as well as $\cl\, S_{C}(|\cdot|) = \{1\}$ while $\{x \in \uB_{\psi}\mid x^*(x) \geq \alpha \} = [\alpha, 1]$.
\end{remark}

\begin{remark}
    Consider the initial assumptions of Lemma \ref{lem:BPcone}. Suppose that $B_K(\psi)$ is a normlike-base for $K$. The validity of \eqref{eq:sep_condition_9} for some $(x^*, \alpha) \in Y^* \times \Rpos$ implies that $K$ is pointed. If $B_C(\psi)$ is a normlike-base for $C = C_{\psi}(x^*, \alpha)$, then $K = C$ by Lemma \ref{lem:BPcone} ($2^\circ$), hence $\max\{|x^*|, \psi\}$ is a continuous norm. Thus, in non-normable spaces where no continuous norms exist, the condition \eqref{eq:sep_condition_9} together with $B_C(\psi)$ is a normlike-base for $C = C_{\psi}(x^*, \alpha)$ (for some $(x^*, \alpha) \in Y^* \times \Rpos$) can never be true.
\end{remark}

To prove some more specific results in the normed space setting in the upcoming Proposition \ref{prop:separation_conditions_2} and Corollary \ref{cor:main_scalarization_result_PEff_A_normed} (see Section \ref{sec:scalarization_PEff_A}), we need  additional lemmata.

\begin{lemma} \label{lem:WeakCompactnessOmega1AndOmega2normed}  
	Consider a real normed space $(Y, ||\cdot||)$, a (weakly) compact set $\Omega_1 \subseteq Y$ with $0 \notin \Omega_1$, and a (weakly) closed, bounded  set $\Omega_2 \subseteq Y$ with $\Omega_2 \subseteq \R_+ \cdot \Omega_1$. Then, $\Omega_2$ is (weakly) compact.
\end{lemma}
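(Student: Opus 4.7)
The plan is to show that $\Omega_2$ is contained in a set of the form $[0, R] \cdot \Omega_1$ for some $R > 0$, which is (weakly) compact, and then to exploit that $\Omega_2$ is a (weakly) closed subset thereof.

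First I would establish that $\delta := \inf_{x \in \Omega_1}\,\|x\| > 0$. In the norm-topology case, $\|\cdot\|$ is continuous and $\Omega_1$ is compact, so the infimum is attained; it is strictly positive because $0 \notin \Omega_1$. In the weak-topology case, $\|\cdot\|$ is weakly lower semicontinuous (being the supremum of the weakly continuous functionals $|y^*(\cdot)|$ over $\|y^*\|_* \leq 1$), so it attains its infimum on the weakly compact set $\Omega_1$, and again $0 \notin \Omega_1$ yields $\delta > 0$.

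Next, since $\Omega_2$ is bounded in norm, let $M := \sup_{y \in \Omega_2}\,\|y\| < \infty$. For every $y \in \Omega_2$, the inclusion $\Omega_2 \subseteq \R_+ \cdot \Omega_1$ gives $y = tx$ for some $t \geq 0$ and some $x \in \Omega_1$, and hence $\|y\| = t\,\|x\| \geq t\,\delta$, so $t \leq M/\delta =: R$. Consequently
\[
    \Omega_2 \;\subseteq\; [0, R] \cdot \Omega_1 \;=\; \mu([0,R] \times \Omega_1),
\]
where $\mu: \R \times Y \to Y$ denotes scalar multiplication $(t,x)\mapsto tx$.

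The last step is to observe that $[0, R] \times \Omega_1$ is (weakly) compact by Tychonoff's theorem, and that $\mu$ is continuous with respect to the relevant topologies: in the norm case this is standard, and in the weak case it follows from the fact that for every $y^* \in Y^*$ the map $(t, x) \mapsto y^*(tx) = t\,y^*(x)$ is jointly continuous on $\R \times (Y, \sigma(Y,Y^*))$. Hence $\mu([0, R] \times \Omega_1)$ is (weakly) compact, and $\Omega_2$, being a (weakly) closed subset of a (weakly) compact set, is itself (weakly) compact. I expect the only subtle point to be the verification of joint (weak) continuity of $\mu$ and of the weak lower semicontinuity of the norm, both of which are routine but must be cited carefully to ensure the argument treats both cases uniformly.
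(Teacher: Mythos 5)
Your proof is correct and follows essentially the same route as the paper's: both reduce the claim to showing $\Omega_2 \subseteq [0,R]\cdot\Omega_1$ for some $R>0$ and then invoke joint (weak) continuity of scalar multiplication together with (weak) compactness of $[0,R]\times\Omega_1$, finishing with the closed-subset-of-compact argument. The only difference is that you obtain the bound $R = M/\delta$ directly from $\delta = \inf_{x\in\Omega_1}\|x\| > 0$ (via weak lower semicontinuity of the norm on the weakly compact $\Omega_1$), whereas the paper argues by contradiction that unbounded scalars would force $0 \in \cl\,\Omega_1$; both are sound.
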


\begin{proof} 
    Note that a set $\Omega_2$ in a normed space is weakly bounded if and only if it is (norm) bounded. 
	First, we prove that there exists $t > 0$ such that 
	\begin{equation}
		\label{eq:Omega2subset[0,t]Omega1}
		\Omega_2 \subseteq [0, t] \cdot \Omega_1.
	\end{equation}
	On the contrary, assume that there are sequences $(x_n^1)$, $(x_n^2)$ and $(t_n)$ with $x_n^1 \in \Omega_1$, $x_n^2 \in \Omega_2$, $t_n \in \mathbb{P}$ such that $x_n^2 = t_n \cdot x_n^1$ with $t_n \to \infty$ for $n \to \infty$. 
	By the boundedness of $\Omega_2$ there is $\lambda > 0$ such that $||x|| \leq \lambda$ for all $x \in \Omega_2$.
	Consequently, we have
	$||x_n^1|| = ||t_n^{-1}x_n^2|| = t_n^{-1} ||x_n^2|| \leq t_n^{-1} \lambda \to 0$ for $n \to \infty$.
	This means that $0 \in \cl\,\Omega_1 \subseteq \cl_w\,\Omega_1$. Noting that the compact (respectively, weakly compact) set $\Omega_1$ is closed (respectively, weakly closed), we get a contradiction.
	
	Define the function $h: \R \times Y \to Y$ by $h(s, \omega) := s \cdot \omega$ for all $(s, \omega) \in \R \times Y$. Since $[0,t] \times \Omega_1$ is (weakly) compact in $\R \times Y$, and the function $h$ is (weakly) continuous by the topological vector space properties, we get that $[0, t] \cdot \Omega_1$ is (weakly) compact. From \eqref{eq:Omega2subset[0,t]Omega1} we derive that the (weakly) closed subset $\Omega_2$ of $[0, t] \cdot \Omega_1$ is (weakly) compact as well.
\end{proof}

\begin{lemma} \label{lem:WeakCompact}
	Consider a real normed space $(Y, ||\cdot||)$ and a nontrivial cone $C \subseteq Y$. Then, the following assertions are valid:
	\begin{itemize}
		\item[$1^\circ$] If $B_C(||\cdot||)$ is compact, then $B_C(||\cdot||')$ is compact for all $||\cdot||' \sim ||\cdot||$.
		\item[$2^\circ$] If $\cl_w\,B_C(||\cdot||)$ is weakly compact with $0 \notin \cl_w\,B_C(||\cdot||)$, then $\cl_w\,B_C(||\cdot||')$ is weakly compact for all $||\cdot||' \sim ||\cdot||$.
		\item[$3^\circ$] If $\cl\,S_C(||\cdot||)$ is (weakly) compact with $0 \notin \cl\,S_C(||\cdot||)$, then $\cl\,S_C(||\cdot||')$ is (weakly) compact for all $||\cdot||' \sim ||\cdot||$.
		\item[$4^\circ$] If $Y$ is reflexive (or equivalently, $\uB_{||\cdot||}$ is weakly compact), then $\cl_w\,B_C(||\cdot||')$ and $\cl\,S_C(||\cdot||')$ are weakly compact for all $||\cdot||' \sim ||\cdot||$.
	\end{itemize}
\end{lemma}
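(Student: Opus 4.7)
The plan is to apply Lemma \ref{lem:WeakCompactnessOmega1AndOmega2normed} uniformly to each assertion: in each case, take $\Omega_1$ to be the given (weakly) compact set coming from $||\cdot||$ and $\Omega_2$ the analogous candidate set built from $||\cdot||'$, and then verify that $\Omega_2$ is (weakly) closed, (norm) bounded, and satisfies the scaling inclusion $\Omega_2 \subseteq \R_+ \cdot \Omega_1$. The norm equivalence yields constants $0 < c \leq d$ such that $||x|| \in [c,d]$ for every $x \in B_C(||\cdot||')$; in particular $B_C(||\cdot||')$ and $S_C(||\cdot||') \subseteq \uB_{||\cdot||'}$ are bounded in both norms. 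The real work lies in the scaling inclusion and, for $1^\circ$ only, the closedness of $\Omega_2$.

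For $1^\circ$, every $x \in B_C(||\cdot||')$ decomposes as $x = ||x|| \cdot (x / ||x||) \in \R_+ \cdot B_C(||\cdot||)$, yielding the inclusion; closedness of $B_C(||\cdot||')$ follows from observing that compactness of $B_C(||\cdot||)$ together with $0 \notin B_C(||\cdot||)$ forces $C = \R_+ \cdot B_C(||\cdot||)$ to be closed (any convergent sequence $t_n y_n$ with $y_n \in B_C(||\cdot||)$ has bounded $t_n$, since $\inf_{y \in B_C(||\cdot||)} ||y|| = 1$), so that $B_C(||\cdot||') = C \cap \uS_{||\cdot||'}$ is closed. For $2^\circ$, the inclusion requires a genuine net argument because $||\cdot||'$ is not weakly continuous: given $z \in \cl_w B_C(||\cdot||')$, pick a net $(x_\alpha) \subseteq B_C(||\cdot||')$ weakly converging to $z$, set $t_\alpha := ||x_\alpha|| \in [c,d]$ and $y_\alpha := t_\alpha^{-1} x_\alpha \in B_C(||\cdot||)$, and pass to a subnet along which $t_\alpha \to t \in [c,d] \subset \Rpos$; then $y_\alpha = t_\alpha^{-1} x_\alpha$ weakly converges to $t^{-1} z$, placing $t^{-1} z \in \cl_w B_C(||\cdot||) = \Omega_1$ and hence $z \in \R_+ \cdot \Omega_1$. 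Weak closedness of $\Omega_2 = \cl_w B_C(||\cdot||')$ is by definition, and its boundedness follows from weak lower semicontinuity of $||\cdot||$.

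For $3^\circ$, the scaling inclusion reduces to a convex-combination rearrangement: if $u = \sum_i \lambda_i x_i \in S_C(||\cdot||')$ with $x_i = t_i y_i$, $t_i \in [c,d]$, $y_i \in B_C(||\cdot||)$, then $T := \sum_i \lambda_i t_i \in [c,d]$ and $u = T \sum_i (\lambda_i t_i / T) y_i \in [c,d] \cdot S_C(||\cdot||)$; this survives passage to the closure once one notes that $\R_+ \cdot \cl S_C(||\cdot||)$ is closed (in a convergent sequence $t_n z_n$ with $z_n \in \cl S_C(||\cdot||)$ the scalars $t_n$ are bounded, since $0 \notin \cl S_C(||\cdot||)$ and $\cl S_C(||\cdot||)$ is bounded, and the same dichotomy with weak limits covers the weakly compact case), while in the weak case $\cl S_C(||\cdot||')$ is weakly closed by Mazur's theorem applied to the convex set $S_C(||\cdot||')$. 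Finally, $4^\circ$ follows directly: reflexivity of $Y$ renders $\uB_{||\cdot||}$ weakly compact, so $\uB_{||\cdot||'}$ (convex, norm-closed, hence weakly closed by Mazur, and contained in a positive multiple of $\uB_{||\cdot||}$) is weakly compact; then $\cl_w B_C(||\cdot||')$ and $\cl S_C(||\cdot||') = \cl_w S_C(||\cdot||')$ (Mazur again) are weakly closed subsets of $\uB_{||\cdot||'}$, hence weakly compact. The main technical hurdle is the net passage in $2^\circ$: the scaling limit $t$ must remain bounded away from $0$ so that one can divide, which is precisely where the norm equivalence and the hypothesis $0 \notin \cl_w B_C(||\cdot||)$ work together.
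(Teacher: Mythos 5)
Your proof is correct and follows the same overall strategy as the paper: everything is funnelled through Lemma \ref{lem:WeakCompactnessOmega1AndOmega2normed}, with $\Omega_1$ the given (weakly) compact set for $||\cdot||$ and $\Omega_2$ its analogue for $||\cdot||'$, and the work concentrated in the scaling inclusion $\Omega_2 \subseteq \R_+\cdot\Omega_1$ plus (weak) closedness and boundedness of $\Omega_2$. The difference is only in how those inclusions are justified. The paper outsources them to cited results: for $1^\circ$ and $2^\circ$ it invokes \cite[Lem. 2.1.43(iii)]{GoeRiaTamZal2023} to conclude that $\R_+\cdot B$ is (weakly) closed for a (weakly) closed, bounded set $B$ with $0\notin B$, and then passes through $C \subseteq \R_+\cdot B_C(||\cdot||)$, respectively $\cl_w\,C = \R_+\cdot\cl_w\,B_C(||\cdot||)$; for $3^\circ$ it uses the identity $\cl(\conv C)=\R_+\cdot\cl\,S_C(||\cdot||)$ from \cite[Rem. 2.5]{GuenKhaTam23b}. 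You instead give self-contained arguments: the direct decomposition $x=||x||\,(x/||x||)$ and a bounded-scalars-plus-compactness argument for closedness of $C$ in $1^\circ$; a subnet argument (dividing by $t_\alpha=||x_\alpha||\in[c,d]$ and using that weak convergence is preserved under convergent scalar multiplication) to get $\cl_w\,B_C(||\cdot||')\subseteq[c,d]\cdot\cl_w\,B_C(||\cdot||)$ in $2^\circ$; and the convex-combination rearrangement $u=T\sum_i(\lambda_i t_i/T)y_i$ in $3^\circ$, with Mazur's theorem supplying weak closedness of the convex sets involved. Your versions are slightly longer but make the mechanism explicit and in $2^\circ$ even yield a sharper inclusion ($[c,d]$ rather than $\R_+$). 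One small inaccuracy in your closing remark: the divisibility by $t$ in $2^\circ$ comes solely from the norm equivalence ($t\ge c>0$); the hypothesis $0\notin\cl_w\,B_C(||\cdot||)$ is not what keeps $t$ away from zero, but is what Lemma \ref{lem:WeakCompactnessOmega1AndOmega2normed} requires of $\Omega_1$. This does not affect the validity of the argument.
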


\begin{proof} Consider any $||\cdot||' \sim ||\cdot||$.
	$1^\circ$  Obviously, $B_C(||\cdot||') \subseteq C = \R_+\cdot B_C(||\cdot||)$. Since $||\cdot||' \sim ||\cdot||$, there is $s > 0$ such that $B_C(||\cdot||') \subseteq \uB_{||\cdot||'} \subseteq s \cdot \uB_{||\cdot||}$, which shows that $B_C(||\cdot||')$ is bounded. Moreover, according to \cite[Lemma 2.1.43 (iii)]{GoeRiaTamZal2023}, for the bounded and  closed norm-base $B_C(||\cdot||)$ with $0 \notin B_C(||\cdot||)$ we have $C = \mathbb{R}_+ \cdot B_C(||\cdot||)$ is closed, hence $B_C(||\cdot||')$ is closed as well.
	Then, the compactness of $B_C(||\cdot||')$ follows from Lemma \ref{lem:WeakCompactnessOmega1AndOmega2normed} applied for the compact set $\Omega_1 := B_C(||\cdot||)$ and the closed, bounded set $\Omega_2 := B_C(||\cdot||')$. Note that $0 \notin B_C(||\cdot||) = \Omega_1$.
	
	$2^\circ$  Since $C = \R_+\cdot B_C(||\cdot||) \subseteq \R_+\cdot \cl_w\,B_C(||\cdot||) \subseteq  \cl_w\, C$, we have 
	$\cl_w\, C = \cl_w(\R_+\cdot \cl_w\,B_C(||\cdot||))$. Taking into account that $\cl_w\,B_C(||\cdot||)$ is a weakly closed, bounded set with $0 \notin \cl_w\,B_C(||\cdot||)$, applying \cite[Lemma 2.1.43 (iii)]{GoeRiaTamZal2023} we conclude that $\R_+\cdot \cl_w\,B_C(||\cdot||)$ is weakly closed (remark that the weak topology of the normed space $Y$ is Hausdorff).
	Consequently, 
	$\cl_w\,B_C(||\cdot||') \subseteq \cl_w\, C = \R_+\cdot \cl_w\,B_C(||\cdot||)$.  Since $||\cdot||' \sim ||\cdot||$, there is $s > 0$ such that $\cl_w\,B_C(||\cdot||') \subseteq \uB_{||\cdot||'} \subseteq s \cdot \uB_{||\cdot||}$, which shows that $\cl_w\,B_C(||\cdot||')$ is bounded. Then, the weak compactness of $\cl_w\,B_C(||\cdot||')$ follows from Lemma \ref{lem:WeakCompactnessOmega1AndOmega2normed} applied for the weakly compact set $\Omega_1 := \cl_w\,B_C(||\cdot||)$ and the weakly closed, bounded set $\Omega_2 := \cl_w\,B_C(||\cdot||')$.
	
	$3^\circ$ We have
	$\cl\,S_C(||\cdot||') \subseteq \cl(\conv\,C) = \R_+\cdot \cl\,S_C(||\cdot||)$, where the latter equality is discussed in \cite[Rem. 2.5]{GuenKhaTam23b}. Since $||\cdot||' \sim ||\cdot||$, there is $s > 0$ such that $\cl\,S_C(||\cdot||') \subseteq \uB_{||\cdot||'} \subseteq s \cdot \uB_{||\cdot||}$, hence the set $\cl\,S_C(||\cdot||')$ is bounded.
	Finally, the (weak) compactness of $\cl\,S_C(||\cdot||')$ follows from Lemma \ref{lem:WeakCompactnessOmega1AndOmega2normed} applied for the (weakly) compact set $\Omega_1 := \cl\,S_C(||\cdot||)$ and the (weakly) closed, bounded set $\Omega_2 := \cl\,S_C(||\cdot||')$.
	
	$4^\circ$ Since $||\cdot||' \sim ||\cdot||$, there is $s > 0$ such that $\cl_w\,B_C(||\cdot||') \subseteq \cl\,S_C(||\cdot||') \subseteq \uB_{||\cdot||'} \subseteq s \cdot \uB_{||\cdot||}$, hence both weakly closed sets $\cl_w\,B_C(||\cdot||')$ and $\cl\,S_C(||\cdot||')$ are weakly compact if $\uB_{||\cdot||}$ is weakly compact.
\end{proof}

To later prove a Bishop-Phelps type scalarization result (Corollary \ref{cor:main_scalarization_result_PEff_A_normed}) for vector optimization problems in real normed spaces, we need the following proposition.

\begin{proposition}
	\label{prop:separation_conditions_2}
	Consider a real normed space $(Y, ||\cdot||)$ with dimension two or higher, a nontrivial cone $K \subseteq Y$ with the norm-base $B_K(||\cdot||)$, a nontrivial cone $A \subseteq Y$ with the norm-base $B_A(||\cdot||)$. Suppose that \eqref{eq:sep_condition_2} and \eqref{eq:sep_condition_3} with $\psi(\cdot) := ||\cdot||$ are valid. Moreover, assume that one of the following statements is valid:
	\begin{enumerate}
		\item[a)] $B_A(||\cdot||)$ is compact.
		\item[b)] $A$ is weakly closed, and either $Y$ is reflexive or $\cl_w\,B_A(||\cdot||)$ is weakly compact with $0 \notin \cl_w\,B_A(||\cdot||)$.
	\end{enumerate}
	Then, there is $||\cdot||' \sim ||\cdot||$ such that
	\eqref{eq:sep_condition_1} with $\psi(\cdot) := ||\cdot||'$ is valid. 
\end{proposition}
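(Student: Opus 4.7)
My plan is to apply Proposition \ref{prop:separation_conditions_1} b) after replacing the cone $-K$ by the closed convex cone $C := \cl(\conv(-K))$. Since $-K \subseteq C$, one has $\cl\,S_{-K}(||\cdot||') \subseteq \cl\,S_C(||\cdot||')$ for any norm $||\cdot||'$; moreover, \eqref{eq:sep_condition_2} is equivalent to $A \cap C = A \cap \cl(\conv\,C) = \{0\}$. Thus it suffices to construct an equivalent norm $||\cdot||'$ under which $C$ becomes a Bishop-Phelps cone and to verify one of the compactness conditions \eqref{ass:com1}--\eqref{ass:com4} for $B_A(||\cdot||')$ in a suitable topology; then $\cl\,S_A^0(||\cdot||') \cap \cl\,S_C(||\cdot||') = \emptyset$ will follow from Proposition \ref{prop:separation_conditions_1} b), and this implies \eqref{eq:sep_condition_1} as desired.

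For the construction, observe that \eqref{eq:sep_condition_3} together with the identity $C = \R_+ \cdot \cl\,S_{-K}(||\cdot||)$ (cf.\ the proof of Lemma \ref{lem:WeakCompact}) shows that $C$ is well-based, so $\intt\,C^+ \neq \emptyset$ by Remark \ref{rem:clconvKwell-based}. Fix $x^* \in \intt\,C^+$; the canonical base $B_1 := \{y \in C \mid x^*(y) = 1\}$ is then bounded, closed and convex. Define $||\cdot||'$ as the Minkowski functional of
$T := \cl \, \conv\bigl(B_1 \cup (-B_1) \cup \epsilon_0\,\uB_{||\cdot||}\bigr)$
for a sufficiently small $\epsilon_0 > 0$. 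A careful verification shows that $T$ is a bounded, closed, convex, symmetric $0$-neighborhood (so $||\cdot||' \sim ||\cdot||$) and that $\uS_{||\cdot||'} \cap \{x^* = 1\} = B_1$; these combine to yield $C = C_{||\cdot||'}(x^*, 1)$, which is precisely the representation theorem mentioned in Example \ref{ex:Bishop-Phelps_cone}. Lemma \ref{lem:BPcone} (applicable since $\dim Y \geq 2$ and $||\cdot||'$ is continuous) then gives $\cl\,S_C(||\cdot||') = \{y \in \uB_{||\cdot||'} \mid x^*(y) \geq 1\}$, establishing \eqref{eq:sep_condition_9} for $C$ in place of $-K$; the condition \eqref{eq:sep_condition_3} for $C$ is immediate from $1 > 0$.

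It remains to verify a compactness condition for $B_A$. In case a), Lemma \ref{lem:WeakCompact} ($1^\circ$) transfers compactness of $B_A(||\cdot||)$ to $B_A(||\cdot||')$, so \eqref{ass:com1} holds and Proposition \ref{prop:separation_conditions_1} b) applies directly in the norm topology. In case b), I endow $Y$ with the weak topology and apply Proposition \ref{prop:separation_conditions_1} b) there: $x^*$ is weakly continuous, all the sets appearing in \eqref{eq:sep_condition_2}, \eqref{eq:sep_condition_3}, \eqref{eq:sep_condition_9} are convex (so their weak and norm closures coincide), and by Lemma \ref{lem:WeakCompact} ($2^\circ, 4^\circ$) together with the weak closedness of $A$, the condition \eqref{ass:com3} holds in the weak topology. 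The resulting weak-topology disjointness of $\cl_w\,S_A^0(||\cdot||')$ and $\cl_w\,S_C(||\cdot||')$ collapses to the desired norm-topology version, again by convexity of the two sets. The principal obstacle is the Minkowski-functional construction in the second paragraph: $\epsilon_0$ must be small enough that the added ball does not introduce spurious points on the hyperplane $\{x^* = 1\}$, yet large enough that $T$ remains a $0$-neighborhood ensuring $||\cdot||' \sim ||\cdot||$.
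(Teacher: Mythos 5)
Your proof is correct and follows essentially the same route as the paper's: pass to $\cl(\conv(-K))$, represent it as a Bishop-Phelps cone under an equivalent norm, invoke Lemma \ref{lem:BPcone} to obtain \eqref{eq:sep_condition_9}, transfer the compactness of the base of $A$ via Lemma \ref{lem:WeakCompact}, and conclude with Proposition \ref{prop:separation_conditions_1} b) (applied in the weak topology for case b)). The only difference is that you construct the equivalent norm explicitly via a Minkowski functional, whereas the paper cites the known equivalence between \eqref{eq:sep_condition_3} and the representability of $-\cl(\conv\,K)$ as a Bishop-Phelps cone; your construction is sound, and the two constraints on $\epsilon_0$ that you flag as the "principal obstacle" are in fact simultaneously satisfiable for any $\epsilon_0 \in (0, 1/||x^*||_*)$.
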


\begin{proof} Suppose that \eqref{eq:sep_condition_2} and \eqref{eq:sep_condition_3} (with $\psi(\cdot) := ||\cdot||$) are valid. 
	First, note that the following assertions are known to be equivalent (see \cite[Lem. 1.25 and 1.26]{Eichfelder2014}, \cite[Lem. 3.4]{GuenKhaTam23b} and \cite[Th. 3.2]{Petschke1990}):
	\begin{itemize}
		\item $0  \notin \cl\, S_{-K}(||\cdot||)$\quad (i.e., \eqref{eq:sep_condition_3} with $\psi(\cdot) := ||\cdot||$ is valid);
		\item $0  \notin \cl\, S_{-\cl(\conv\,K)}(||\cdot||)$;
		\item $(-\cl(\conv\,K))^{a+}(||\cdot||) \cap (Y^* \times \mathbb{P}) \neq \emptyset$;
		\item $-\cl(\conv\,K)$ is well-based;
		\item $-\cl(\conv\,K)$ is representable as a Bishop-Phelps cone (cf. Example \ref{ex:Bishop-Phelps_cone}).
	\end{itemize}
	Thus, under assumption \eqref{eq:sep_condition_3} (with $\psi(\cdot) := ||\cdot||$), there is a pair $(x^*, \alpha) \in Y^* \times \Rpos$ and an equivalent norm $||\cdot||' \sim ||\cdot||$ such that $-\cl(\conv\,K) = C_{||\cdot||'}(x^*, \alpha)$. By Lemma \ref{lem:BPcone}, in the real normed space $(Y, ||\cdot||')$ with dimension two or higher we have $\cl\, S_{-\cl(\conv\,K)}(||\cdot||') = \{x \in \uB_{||\cdot||'}\mid x^*(x) \geq \alpha \}$,
	hence \eqref{eq:sep_condition_3} and \eqref{eq:sep_condition_9} with $\psi(\cdot) := ||\cdot||'$ and $\cl(\conv\,K)$ in the role of $K$ are valid.
	Let us finish the proof by assuming a) and b), respectively:
	
	a) In view of Lemma \ref{lem:WeakCompact} ($1^\circ$), $B_A(||\cdot||')$ is compact if $B_A(||\cdot||)$ is compact. Proposition \ref{prop:separation_conditions_1} b) applied for the space $(Y, ||\cdot||')$, $\psi(\cdot) := ||\cdot||'$ and $\cl(\conv\,K)$ in the role of $K$ ensures 
	$(\cl\,S_{A}^0(||\cdot||')) \cap (\cl\, S_{-\cl(\conv\,K)}(||\cdot||')) = \emptyset$, hence
	\eqref{eq:sep_condition_1} with $\psi(\cdot) := ||\cdot||'$ is valid, taking into account that $\cl\, S_{{-\rm cl}(\conv\,K)}(||\cdot||') \supseteq \cl\, S_{-K}(||\cdot||')$. 
	
	b) In view of Lemma \ref{lem:WeakCompact} ($2^\circ$, $4^\circ$), $\cl_w\,B_A(||\cdot||')$ is weakly compact if either $Y$ is reflexive or $\cl_w\,B_A(||\cdot||)$ is weakly compact with $0 \notin \cl_w\,B_A(||\cdot||)$.
	Proposition \ref{prop:separation_conditions_1} b) applied for the space $Y$ endowed with the weak topology, $\psi(\cdot) := ||\cdot||'$ and $\cl(\conv\,K)$ in the role of $K$ ensures \eqref{eq:sep_condition_1} with $\psi(\cdot) := ||\cdot||'$, taking into account that $\cl_w\,S_{A}^0(||\cdot||') = \cl\,S_{A}^0(||\cdot||')$ and $\cl_w\, S_{-\cl_w(\conv\,K)}(||\cdot||') = \cl\, S_{{-\rm cl}(\conv\,K)}(||\cdot||') \supseteq \cl\, S_{-K}(||\cdot||')$.    
\end{proof}

Furthermore, we recall some recent results on cone separation in real topological-linear (locally convex) spaces established in \cite{GuenKhaTam23a}.

\begin{proposition}[{\cite[Th. 5.4]{GuenKhaTam23a}}] \label{prop:weak_cone_separation_if}
	Assume that $Y$ is a real topological-linear space, $K, A \subseteq Y$ are nontrivial cones with normlike-bases $B_{K}(\psi)$ and $B_{A}(\psi)$, and $K$ is solid. Suppose that $S_{-K}(\psi)$ is solid and 
	$
	S_{A}^0(\psi) \cap \intt \,S_{-K}(\psi) = \emptyset.
	$
	Then, there exists $(x^*, \alpha) \in K^{a\circ}(\psi)$ such that
	\begin{align*}
		x^*(a) + \alpha \psi(a) & \geq  0 \geq x^*(k) + \alpha \psi(k)   \quad \mbox{for all } a \in A \mbox{ and }k \in -K,\\
		x^*(a) + \alpha \psi(a) & \geq  0 > x^*(k) + \alpha \psi(k)   \quad \mbox{for all } a \in A \mbox{ and }k \in \intt(-K),
	\end{align*}
	or equivalently,
	\begin{align}
		\label{sep_conditions_weak}
		& A \cap -\intt\, C_\psi(x^*, \alpha) = \emptyset \quad \text{and} \quad  K \subseteq C_\psi(x^*, \alpha)\quad \\
		& \text{and} \quad  \intt\,K \subseteq C_\psi^>(x^*, \alpha) = \intt\, C_\psi(x^*, \alpha). \notag
	\end{align}
\end{proposition}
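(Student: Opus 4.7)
The plan is to apply a Hahn--Banach type geometric separation theorem in $Y$ to the two convex sets $S_A^0 = \conv(\{0\} \cup B_A)$ and $\intt\,S_{-K}$. These sets are disjoint by hypothesis and, since $S_{-K}$ is solid, the latter is a nonempty open convex set; thus standard separation will yield $x^* \in Y^* \setminus \{0\}$ and $\gamma \in \R$ with
\begin{equation*}
x^*(s) < \gamma \leq x^*(a') \quad \text{for all } s \in \intt\,S_{-K},\ a' \in S_A^0.
\end{equation*}
Because $0 \in S_A^0$, I obtain $\gamma \leq 0$, and I set $\alpha := -\gamma \in \R_+$. The non-strict inequality then extends by closure to $x^*(s) \leq \gamma$ for every $s \in \cl\,S_{-K} = \cl(\intt\,S_{-K})$ (using solidness and convexity of $S_{-K}$), and in particular for every $s \in B_{-K}$.

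Next I would use the normlike-base structure to lift these inequalities from $B_{-K}$ and $B_A$ to all of $-K$ and $A$: for $k \in -K \setminus \{0\}$, positivity of $\psi$ on $-K \setminus \{0\}$ gives $k/\psi(k) \in B_{-K}$, so $x^*(k/\psi(k)) \leq \gamma$ multiplied by $\psi(k) > 0$ produces $x^*(k) + \alpha\psi(k) \leq 0$; an analogous scaling $a/\psi(a) \in B_A \subseteq S_A^0$ yields $x^*(a) + \alpha\psi(a) \geq 0$ for all $a \in A$. Substituting $k = -y$ with $y \in K$ in the first inequality forces $x^*(y) \geq \alpha\psi(y) \geq 0$, hence $x^* \in K^+ \setminus \{0\}$ and $K \subseteq C_\psi(x^*,\alpha)$.

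The main obstacle I expect is passing from the strict separation on $\intt\,S_{-K}$ to the strict inequality $x^*(k) + \alpha\psi(k) < 0$ on all of $\intt(-K)$, since $\intt(-K)$ and $\intt\,S_{-K}$ are a priori distinct sets. For this step, I plan to show that for every $k \in \intt(-K)$ the normalized vector $k/\psi(k)$ belongs to $\intt\,S_{-K}$, by exploiting the cone structure $-K = \R_+ \cdot B_{-K}$: an open neighbourhood of $k$ inside $-K$ can be rescaled and convex-combined with $0$ (or with other base-elements) to produce an open neighbourhood of $k/\psi(k)$ inside $S_{-K}$, after which $x^*(k) + \alpha\psi(k) = \psi(k)(x^*(k/\psi(k)) - \gamma) < 0$ follows. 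Substituting $k = -y$ with $y \in \intt\,K$ then gives $x^*(y) > \alpha\psi(y)$, which establishes $(x^*,\alpha) \in K^{a\circ}$. Finally, the equivalent reformulation in \eqref{sep_conditions_weak} is immediate from the definitions, with the identity $C_\psi^>(x^*,\alpha) = \intt\,C_\psi(x^*,\alpha)$ following from Lemma \ref{lem:properties_BP} ($6^\circ$) and Lemma \ref{lem:properties_BP_fcns} ($3^\circ$), since solidness of $K$ together with $(x^*,\alpha) \in K^{a\circ}$ forces $\intt\,C_\psi(x^*,\alpha) \neq \emptyset$, so the algebraic and topological interiors of $C_\psi(x^*,\alpha)$ coincide.
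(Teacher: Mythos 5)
Your overall strategy --- Eidelheit separation of the convex set $S_A^0$ from the nonempty open convex set $\intt\,S_{-K}$, followed by normalization along the normlike-bases to lift the inequalities to the cones $A$ and $-K$ --- is the right one, and everything up to and including $x^*(a)+\alpha\psi(a)\ge 0$ on $A$, $x^*(k)+\alpha\psi(k)\le 0$ on $-K$, and $x^*\in K^+\setminus\{0\}$ is sound. The genuine gap is the step you yourself flag as the main obstacle: the claim that $k/\psi(k)\in\intt\,S_{-K}$ for every $k\in\intt(-K)$ is false. Since $B_{-K}\subseteq\uS_\psi$ and $\uB_\psi$ is convex, one has $S_{-K}=\conv(B_{-K})\subseteq\uB_\psi$; but no point $b$ with $\psi(b)=1$ can be an interior point of a subset of $\uB_\psi$, because every neighbourhood of $b$ contains $tb$ with $t>1$ and $\psi(tb)=t>1$. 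Hence $B_{-K}\cap\intt\,S_{-K}=\emptyset$, and in particular the normalized vector $k/\psi(k)\in B_{-K}$ never lies in $\intt\,S_{-K}$. (Concretely: $Y=\R^2$, $K=\R^2_+$, $\psi=||\cdot||_2$; then $S_{-K}$ is the convex hull of a quarter-circle arc and $k/\psi(k)$ sits on that arc, i.e.\ on $\bd\,S_{-K}$.) Your closing remark then invokes $(x^*,\alpha)\in K^{a\circ}$ to identify $C^>_\psi(x^*,\alpha)$ with $\intt\,C_\psi(x^*,\alpha)$, but membership in $K^{a\circ}$ is exactly what the broken step was supposed to deliver, so as written the argument is circular at the decisive point.

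The strict inequality on $\intt(-K)$ should instead be extracted from what you have already established. The separation provides some $s_0\in\intt\,S_{-K}$ with $x^*(s_0)<\gamma=-\alpha$, and $\psi(s_0)\le 1$ because $S_{-K}\subseteq\uB_\psi$; hence $x^*(-s_0)>\alpha\ge\alpha\,\psi(-s_0)$, i.e.\ $C^>_\psi(x^*,\alpha)\neq\emptyset$. Since $K\subseteq C_\psi(x^*,\alpha)$ and $K$ is solid, $\intt\,C_\psi(x^*,\alpha)\supseteq\intt\,K\neq\emptyset$. Now Lemma \ref{lem:properties_BP_fcns} ($3^\circ$), together with the coincidence of core and interior for a solid convex set, yields $C^>_\psi(x^*,\alpha)=\cor\,C_\psi(x^*,\alpha)=\intt\,C_\psi(x^*,\alpha)\supseteq\intt\,K$, which is precisely the missing strict inequality $x^*(k)+\alpha\psi(k)<0$ for $k\in\intt(-K)$ and completes the verification that $(x^*,\alpha)\in K^{a\circ}$. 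With this replacement the remainder of your write-up goes through.
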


\begin{proposition}[{\cite[Th. 5.8]{GuenKhaTam23a}}] \label{prop:weak_cone_separation_iff_convex}
	Assume that $Y$ is a real topological-linear space, $K, A \subseteq Y$ are nontrivial, convex cones with normlike-bases $B_{K}(\psi)$ and $B_{A}(\psi)$, and $K$ is solid. 
	Suppose that $S_{-K}(\psi)$ is solid.	
	Then, the following assertions are equivalent:	
	\begin{itemize}
		\item[$1^\circ$] $S_{A}^0(\psi) \cap \intt \,S_{-K}(\psi) = \emptyset$.	
		\item[$2^\circ$] $A \cap \intt(-K) = \emptyset$.
		\item[$3^\circ$] There exists $(x^*, \alpha) \in K^{a\circ}(\psi)$ such that \eqref{sep_conditions_weak} is valid.
	\end{itemize}
\end{proposition}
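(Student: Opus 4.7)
The plan is to prove the cyclic chain $1^\circ \Longrightarrow 3^\circ \Longrightarrow 2^\circ \Longrightarrow 1^\circ$, since two of the three implications come essentially for free and the third is already available as a separate result.

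First, $1^\circ \Longrightarrow 3^\circ$ is a direct application of Proposition \ref{prop:weak_cone_separation_if}. That result requires exactly the same standing hypotheses (nontrivial cones with normlike-bases, $K$ solid, $S_{-K}$ solid) plus the disjointness $S_A^0 \cap \intt\,S_{-K} = \emptyset$, and yields the existence of $(x^*,\alpha) \in K^{a\circ}$ fulfilling \eqref{sep_conditions_weak}. Note that in Proposition \ref{prop:weak_cone_separation_if} the convexity of $A$ and $K$ is not even needed, so this step passes through unchanged in the present convex setting.

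Next, $3^\circ \Longrightarrow 2^\circ$ is a matter of chasing the inclusions encoded in \eqref{sep_conditions_weak}. The condition $\intt\,K \subseteq \intt\,C_\psi(x^*,\alpha)$ gives $\intt(-K) = -\intt\,K \subseteq -\intt\,C_\psi(x^*,\alpha)$, so
\[
A \cap \intt(-K) \;\subseteq\; A \cap \bigl(-\intt\,C_\psi(x^*,\alpha)\bigr) \;=\; \emptyset,
\]
where the last equality is the first part of \eqref{sep_conditions_weak}.

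Finally, $2^\circ \Longrightarrow 1^\circ$ is where convexity of $A$ and $K$ enters and this is the only nontrivial step specific to this proposition. The key observation is that convexity forces $S_A^0 \subseteq A$ and $S_{-K} \subseteq -K$. Indeed, $A$ is a convex cone, hence $0 \in A$ and $B_A \subseteq A$, so $\{0\}\cup B_A \subseteq A$ and therefore $S_A^0 = \conv(\{0\}\cup B_A) \subseteq A$; analogously $S_{-K} = \conv(B_{-K}) \subseteq -K$. Since taking interior is monotone under inclusion, $\intt\,S_{-K} \subseteq \intt(-K)$, and combining these two inclusions yields
\[
S_A^0 \cap \intt\,S_{-K} \;\subseteq\; A \cap \intt(-K) \;=\; \emptyset
\]
by $2^\circ$. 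This closes the cycle.

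The step I would expect to require the most care is $2^\circ \Longrightarrow 1^\circ$, simply because one must be sure that both $S_A^0 \subseteq A$ and $S_{-K} \subseteq -K$ are actually licensed (only the convexity of $A$ and $-K = \conv(-K)$, plus $0\in A$, is needed — and all of this is present by hypothesis). The other two implications are either an invocation of the preceding proposition or a purely formal consequence of \eqref{sep_conditions_weak}, so no extra obstacle is anticipated there.
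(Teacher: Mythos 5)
Your proof is correct. The paper states this proposition without proof (it is imported as \cite[Th. 5.8]{GuenKhaTam23a}), so there is no in-paper argument to compare against; your cyclic derivation --- $1^\circ \Rightarrow 3^\circ$ by invoking Proposition \ref{prop:weak_cone_separation_if}, $3^\circ \Rightarrow 2^\circ$ from the inclusion $\intt(-K) = -\intt\,K \subseteq -\intt\,C_\psi(x^*,\alpha)$ contained in \eqref{sep_conditions_weak}, and $2^\circ \Rightarrow 1^\circ$ from $S_{A}^0 \subseteq A$ and $\intt\,S_{-K} \subseteq \intt(-K)$, which convexity of $A$ and $K$ licenses --- is exactly the natural route given the tools available here, and every step is justified by the stated hypotheses.
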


\begin{proposition}[{\cite[Th. 5.9]{GuenKhaTam23a}}] \label{prop:strict_cone_separation_if}
	Assume that $Y$ is a real locally convex space,  $K, A \subseteq Y$ are nontrivial cones with normlike-bases $B_{K}(\psi)$ and $B_{A}(\psi)$. Suppose that one of the sets ${\rm cl}\, S_{-K}(\psi)$ and ${\rm cl}\,S_{A}^0(\psi)$ is compact, and
	\eqref{eq:sep_condition_1} is valid.
	Then, there exists $(x^*, \alpha) \in K^{a\#}(\psi) \cap (Y^* \times \mathbb{P})$ such that
	\begin{equation*}
		x^*(a) + \alpha \psi(a) >  0 > x^*(k) + \alpha \psi(k)   \quad \mbox{for all } a \in A\setminus \{0\} \mbox{ and }k \in -K \setminus \{0\}, 
	\end{equation*}
	or equivalently,
	\begin{equation}
		A \cap -C_\psi(x^*, \alpha) = \{0\} \quad \text{and} \quad  K \setminus \{0\} \subseteq C_\psi^>(x^*, \alpha) = \cor\, C_\psi(x^*, \alpha). \label{sep_conditions_strict}
	\end{equation}
\end{proposition}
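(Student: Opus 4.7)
My plan is to reduce the nonlinear Bishop--Phelps type cone separation to a classical strong Hahn--Banach separation applied to the closed convex sets $\cl\,S_A^0(\psi)$ and $\cl\,S_{-K}(\psi)$, and then to extract the augmented pair $(x^*,\alpha)\in K^{a\#}\cap(Y^*\times\mathbb{P})$ by choosing an appropriate affine offset.

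First I would verify the hypotheses of strong Hahn--Banach separation: both sets are convex (as closures of convex hulls of sets of the form $B_A$ and $B_{-K}$), they are disjoint by assumption \eqref{eq:sep_condition_1}, and one of them is compact by hypothesis. Since $Y$ is locally convex, strong separation yields some $x^*\in Y^*\setminus\{0\}$ and real numbers $\gamma_1<\gamma_2$ with
\[
x^*(k)\le\gamma_1<\gamma_2\le x^*(a)\quad\text{for all } a\in\cl\,S_A^0(\psi),\ k\in\cl\,S_{-K}(\psi).
\]
The crucial observation is that $0\in S_A^0(\psi)\subseteq\cl\,S_A^0(\psi)$, which forces $\gamma_2\le 0$ and hence also $\gamma_1<0$. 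This opens a nonempty interval $(-\gamma_2,-\gamma_1)\subseteq\mathbb{P}$, from which I would pick $\alpha>0$ so that $\gamma_1<-\alpha<\gamma_2$.

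The remaining step is to lift the two inequalities from the bases to the entire cones via positive homogeneity, using that $\psi$ is strictly positive on the normlike-bases $B_A$ and $B_{-K}=-B_K$. For $a\in A\setminus\{0\}$, applying the upper inequality at $a/\psi(a)\in B_A$ and multiplying by $\psi(a)>0$ yields $x^*(a)+\alpha\psi(a)>0$, i.e.\ $A\cap(-C_\psi(x^*,\alpha))=\{0\}$. Analogously, for $y\in K\setminus\{0\}$ the lower inequality at $-y/\psi(y)\in B_{-K}$ gives $x^*(y)>\alpha\psi(y)>0$ after rearrangement, which simultaneously delivers $x^*\in K^\#$, $(x^*,\alpha)\in K^{a\#}$, and $K\setminus\{0\}\subseteq C_\psi^>(x^*,\alpha)$; the identity $C_\psi^>(x^*,\alpha)=\cor\,C_\psi(x^*,\alpha)$ is then immediate from Lemma~\ref{lem:properties_BP_fcns}~($3^\circ$). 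The main subtlety I expect is guaranteeing $\alpha>0$ strictly, since plain Hahn--Banach only produces some real separating constant; it is precisely the inclusion of the origin in $S_A^0(\psi)$ (as opposed to $S_A(\psi)$) that pins $\gamma_2\le 0$ and creates the strictly positive gap needed, which is the structural reason the separation condition \eqref{eq:sep_condition_1} is formulated with $S_A^0$ on the $A$-side rather than with $S_A$.
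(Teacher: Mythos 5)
Your argument is correct and complete: the strong Hahn--Banach separation of the disjoint closed convex sets $\cl\,S_{A}^{0}$ and $\cl\,S_{-K}$ (one compact), the observation that $0\in S_{A}^{0}$ pins the $A$-side threshold at or below $0$ and thereby yields a strictly positive $\alpha$, and the lift to the full cones by positive homogeneity using the positivity of $\psi$ on $K\setminus\{0\}$ and $A\setminus\{0\}$ are exactly the right ingredients. The paper itself imports this statement from \cite[Th.~5.9]{GuenKhaTam23a} without reproducing a proof, but your route is the standard one behind such Bishop--Phelps type separation theorems, and your remark on why the condition must be stated with $S_{A}^{0}$ rather than $S_{A}$ correctly identifies the structural point.
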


From the general strict cone separation result in Proposition \ref{prop:strict_cone_separation_if}, and auxiliary results in Propositions \ref{prop:separation_conditions_1} and \ref{prop:separation_conditions_2} as well as Lemma \ref{lem:WeakCompact}, we derive some new variants of strict cone separation results involving the classical separation condition \eqref{eq:sep_condition_2}.

\begin{corollary} \label{cor:new_sep_1}
	Assume that $Y$ is a real locally convex space,  $K, A \subseteq Y$ are nontrivial cones with normlike-bases $B_{K}(\psi)$ and $B_{A}(\psi)$. Suppose that one of the sets ${\rm cl}\, S_{-K}(\psi)$ and ${\rm cl}\,S_{A}^0(\psi)$ is compact, and
	\eqref{eq:sep_condition_2} and  \eqref{eq:sep_condition_3} are valid. Moreover, assume that the assumptions in a) or in b) of Proposition \ref{prop:separation_conditions_1} are satisfied. 
	Then, there is $(x^*, \alpha) \in K^{a\#}(\psi) \cap (Y^* \times \mathbb{P})$ such that \eqref{sep_conditions_strict} is valid.
\end{corollary}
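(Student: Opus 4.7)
The plan is essentially a two-step chaining of results already established. The corollary combines the implication $[\eqref{eq:sep_condition_2}\text{ and }\eqref{eq:sep_condition_3}]\Longrightarrow\eqref{eq:sep_condition_1}$ from Proposition \ref{prop:separation_conditions_1} with the strict nonlinear separation from Proposition \ref{prop:strict_cone_separation_if}, both of which are already proved. No new analytical machinery is needed.

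First, I would invoke Proposition \ref{prop:separation_conditions_1}. By hypothesis \eqref{eq:sep_condition_2} and \eqref{eq:sep_condition_3} are in force, and by assumption either condition a) (i.e., $A$ is closed and convex) or condition b) (i.e., the existence of $(x^*,\alpha)\in Y^*\times\Rpos$ satisfying \eqref{eq:sep_condition_9}, together with one of the compactness-type assumptions \eqref{ass:com1}--\eqref{ass:com4}) of Proposition \ref{prop:separation_conditions_1} holds. Therefore the conclusion of that proposition applies and we obtain the separation condition \eqref{eq:sep_condition_1}, namely
\[
(\cl\,S_{A}^0(\psi)) \cap (\cl\, S_{-K}(\psi)) = \emptyset.
\]

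Second, with \eqref{eq:sep_condition_1} established and with the standing hypothesis that one of the sets $\cl\,S_{-K}$ and $\cl\,S_{A}^0$ is compact (while $Y$ is a real locally convex space and $K, A$ are nontrivial cones equipped with the normlike-bases $B_{K}$ and $B_{A}$), the hypotheses of Proposition \ref{prop:strict_cone_separation_if} are precisely satisfied. Applying that proposition yields the existence of a pair $(x^*, \alpha) \in K^{a\#} \cap (Y^* \times \mathbb{P})$ for which
\[
x^*(a)+\alpha\psi(a) > 0 > x^*(k)+\alpha\psi(k) \quad \text{for all } a\in A\setminus\{0\},\ k\in -K\setminus\{0\},
\]
which is equivalent to \eqref{sep_conditions_strict}. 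This finishes the proof.

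Since the statement is a packaged corollary of two earlier results, there is no genuine obstacle: the only thing to verify is that the assumptions of Proposition \ref{prop:separation_conditions_1} (either a) or b)) really do deliver condition \eqref{eq:sep_condition_1}, and that the compactness hypothesis of Proposition \ref{prop:strict_cone_separation_if} is stated verbatim among the corollary's hypotheses. Both points are bookkeeping; the main work was already done in the two propositions being invoked.
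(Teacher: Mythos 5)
Your proof is correct and follows exactly the route the paper intends: first use Proposition \ref{prop:separation_conditions_1} to pass from \eqref{eq:sep_condition_2} and \eqref{eq:sep_condition_3} to \eqref{eq:sep_condition_1}, then apply Proposition \ref{prop:strict_cone_separation_if} under the compactness hypothesis to obtain $(x^*,\alpha) \in K^{a\#} \cap (Y^* \times \mathbb{P})$ satisfying \eqref{sep_conditions_strict}. The paper itself presents this corollary as an immediate consequence of those two propositions, so your argument is the same as theirs.
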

\begin{proof}
    The conditions \eqref{eq:sep_condition_2} and  \eqref{eq:sep_condition_3} together with the assumptions in a) or in b) of Proposition \ref{prop:separation_conditions_1} ensure that \eqref{eq:sep_condition_1} is valid (by applying Proposition \ref{prop:separation_conditions_1}). 
    Then, using 
    Proposition \ref{prop:strict_cone_separation_if} 
    there exists $(x^*, \alpha) \in K^{a\#} \cap (Y^* \times \mathbb{P})$ such that \eqref{sep_conditions_strict} is valid.
\end{proof}

\begin{corollary} \label{cor:new_sep_2}
    Consider a real normed space $(Y, ||\cdot||)$ with dimension two or higher, a nontrivial cone $K \subseteq Y$ with the norm-base $B_K(||\cdot||)$, a nontrivial cone $A \subseteq Y$ with the norm-base $B_A(||\cdot||)$. Suppose that 
    ${\rm cl}\, S_{-K}(||\cdot||)$ 
    is weakly compact (e.g., if $Y$ is reflexive), and \eqref{eq:sep_condition_2} and \eqref{eq:sep_condition_3} with $\psi(\cdot) := ||\cdot||$ are valid. Moreover, assume that the assumptions in a) or in b) of Proposition \ref{prop:separation_conditions_2} are satisfied. 
	Then, there are $||\cdot||' \sim ||\cdot||$ and $(x^*, \alpha) \in K^{a\#}(||\cdot||') \cap (Y^* \times \mathbb{P})$ such that \eqref{sep_conditions_strict} with $\psi(\cdot) := ||\cdot||'$ is valid.
\end{corollary}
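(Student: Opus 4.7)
The plan is to reduce the claim to Proposition \ref{prop:strict_cone_separation_if} applied to $Y$ endowed with the weak topology $\sigma(Y,Y^*)$, which is itself a real locally convex space whose topological dual coincides with $Y^*$; thus the functional $x^*$ produced by that result is automatically a norm-continuous linear functional, as required.

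First, I would apply Proposition \ref{prop:separation_conditions_2} (under either a) or b)) to obtain an equivalent norm $||\cdot||'\sim||\cdot||$ such that the separation condition \eqref{eq:sep_condition_1} with $\psi(\cdot):=||\cdot||'$ is valid. Since $S_A^0(||\cdot||')$ and $S_{-K}(||\cdot||')$ are convex, their norm closures coincide with their weak closures, so \eqref{eq:sep_condition_1} remains valid when all closures are interpreted in the weak topology of $Y$.

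Second, I would transfer the assumed weak compactness from the original norm to $||\cdot||'$. If $\cl\,S_{-K}(||\cdot||)$ is weakly compact, then \eqref{eq:sep_condition_3} ensures $0\notin\cl\,S_{-K}(||\cdot||)$, and Lemma \ref{lem:WeakCompact} $(3^\circ)$ directly gives weak compactness of $\cl\,S_{-K}(||\cdot||')$. If instead $\cl\,S_A^0(||\cdot||)$ is weakly compact, I would first observe that in both subcases a) and b) of Proposition \ref{prop:separation_conditions_2} one has $0\notin{\rm cl}_w\,B_A(||\cdot||)$ (in case a) because the norm-compact set $B_A(||\cdot||)$ is weakly closed and does not contain $0$; in case b) either by the explicit assumption or, under reflexivity, via Lemma \ref{lem:WeakCompact} $(4^\circ)$). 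Hence Lemma \ref{lem:WeakCompact} $(2^\circ,\,4^\circ)$ supplies weak compactness of ${\rm cl}_w\,B_A(||\cdot||')$, and then Krein's theorem on the weak compactness of the closed convex hull of a weakly compact subset of a Banach space implies that $\cl\,S_A^0(||\cdot||')={\rm cl}_w\,\conv(\{0\}\cup{\rm cl}_w\,B_A(||\cdot||'))$ is weakly compact.

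Finally, I would apply Proposition \ref{prop:strict_cone_separation_if} to the real locally convex space $(Y,\sigma(Y,Y^*))$ with the seminorm $\psi:=||\cdot||'$ (positive on both $A\setminus\{0\}$ and $K\setminus\{0\}$ since it is a norm) and the induced normlike-bases $B_K(||\cdot||'),B_A(||\cdot||')$. The conclusion of that result yields a pair $(x^*,\alpha)\in K^{a\#}(||\cdot||')\cap(Y^*\times\mathbb{P})$ satisfying \eqref{sep_conditions_strict} with $\psi(\cdot):=||\cdot||'$, which is precisely the desired statement. The main obstacle is the transfer of weak compactness in the case where only $\cl\,S_A^0(||\cdot||)$ is assumed weakly compact: because $0\in S_A^0$, Lemma \ref{lem:WeakCompact} $(3^\circ)$ does not apply directly, and one has to route through the base $B_A$ and then recover the weak compactness of the whole convex hull via Krein's theorem.
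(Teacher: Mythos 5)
Your overall route is exactly the one the paper intends for this corollary (it mirrors the proof of Corollary \ref{cor:main_scalarization_result_PEff_A_normed}): Proposition \ref{prop:separation_conditions_2} produces $||\cdot||'\sim||\cdot||$ with \eqref{eq:sep_condition_1} for $\psi(\cdot):=||\cdot||'$; the weak compactness hypothesis is transported to the new norm; and Proposition \ref{prop:strict_cone_separation_if} is applied to $Y$ endowed with the weak topology, whose dual is again $Y^*$, so the resulting $x^*$ is norm-continuous. The branch where $\cl\, S_{-K}(||\cdot||)$ is the weakly compact set is handled correctly via \eqref{eq:sep_condition_3} and Lemma \ref{lem:WeakCompact} ($3^\circ$), and your remark that norm and weak closures agree on the convex sets $S_{-K}(||\cdot||')$ and $S_{A}^0(||\cdot||')$ is precisely what is needed to read \eqref{eq:sep_condition_1} in the weak topology.

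There are, however, two problems in the branch where $\cl\,S_{A}^0(||\cdot||)$ is the weakly compact set. First, Krein's theorem on the weak compactness of the closed convex hull of a weakly compact set is a Banach-space result, while the corollary only assumes that $(Y,||\cdot||)$ is a normed space; as written, this step is not justified. Second, under reflexivity Lemma \ref{lem:WeakCompact} ($4^\circ$) yields weak compactness of $\cl_w B_A(||\cdot||')$ but does not yield $0\notin\cl_w B_A(||\cdot||)$; this slip is harmless only because for reflexive $Y$ the bounded, norm-closed, convex set $\cl\,S_A^0(||\cdot||')$ is weakly compact with no further argument. Both defects can be removed by a more elementary transfer that bypasses $\cl_w B_A$ and Krein's theorem entirely: by norm equivalence there is $s>0$ with $B_A(||\cdot||')\subseteq(0,s]\cdot B_A(||\cdot||)$ (every $x\in A$ with $||x||'=1$ equals $||x||\,b$ for some $b\in B_A(||\cdot||)$, and $||x||\le s$), hence $S_A^0(||\cdot||')=\conv(\{0\}\cup B_A(||\cdot||'))\subseteq s\cdot S_A^0(||\cdot||)\subseteq s\cdot\cl\,S_A^0(||\cdot||)$, the latter being weakly compact and therefore weakly closed; consequently $\cl\,S_A^0(||\cdot||')=\cl_w S_A^0(||\cdot||')$ is a weakly closed subset of a weakly compact set and is itself weakly compact. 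With this replacement your argument is complete.
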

\begin{proof}
    By Proposition \ref{prop:separation_conditions_2} there is $||\cdot||' \sim ||\cdot||$ such that
	\eqref{eq:sep_condition_1} with $\psi(\cdot) := ||\cdot||'$ is valid. Of course, we have $0 \notin {\rm cl}\, S_{-K}(||\cdot||')$, hence ${\rm cl}\, S_{-K}(||\cdot||')$ is weakly compact by Lemma \ref{lem:WeakCompact} ($3^\circ$). Since $(\cl\,S_{A}^0(||\cdot||')) \cap (\cl\, S_{-K}(||\cdot||'))$ is the same set in $(Y, ||\cdot||)$ and $(Y, w)$ (where $w$ is the weak topology on $Y$), the separation condition \eqref{eq:sep_condition_1} is valid in $(Y, w)$.  Applying Proposition \ref{prop:strict_cone_separation_if} to the real locally convex space $(Y, w)$ and the norm $\psi(\cdot) = ||\cdot||'$, there is $(x^*, \alpha) \in K^{a\#}(||\cdot||') \cap (Y^* \times \mathbb{P})$ such that \eqref{sep_conditions_strict} is valid.
\end{proof}

\subsection{Scalarization results for the concept of weak efficiency} \label{sec:scalarization_WEff}

Throughout the remaining Sections \ref{sec:scalarization_WEff}, \ref{sec:scalarization_PEff_A} and \ref{sec:scalarization_PEff_Henig}, we consider a real topological-linear space $Y$, a seminorm $\psi: Y \to \R$, a nontrivial cone $K \subseteq Y$ with the normlike-base $B_K = B_K(\psi)$ and a nontrivial cone $A \subseteq Y$ with the normlike-base $B_A = B_A(\psi)$ (hence $\psi$ is positive on $(K \cup A) \setminus \{0\}$). The normlike-base property is essential to our Bishop-Phelps type scalarization approach, as it employs key tools from nonlinear cone separation (as derived in Section \ref{sec:cone_separation}). 

In this section, we focus on the solution concept of weak efficiency. More precisely, we will present 
conditions that ensure that a weakly efficient solution of the vector problem \eqref{vector_problem_P} is a solution of the scalar problems \eqref{scal_problem_Kas} and \eqref{scal_problem_GerPasSer} for some parameters $a \in Y$ and $k \in C_\psi(x^*, \alpha) \setminus \ell(C_\psi(x^*, \alpha))$.

\begin{theorem} \label{th:main_scalarization_result_WEff}
	Assume that $Y$ is a  real topological-linear space, $K$ is a nontrivial, solid cone, $\mathbb{A}(\cdot) := \mathbb{R}_+ \cdot (f[\Omega] - f(\cdot))$ or $\mathbb{A}(\cdot) := \mathbb{R}_+ \cdot (f[\Omega] + K - f(\cdot))$, $\bar x \in \Omega$, and $A := \mathbb{A}(\bar x)$ is nontrivial. Moreover, suppose that $B_{K}$ and $B_A$ are normlike-bases for $K$ and $A$, respectively. 
    Suppose $S_{-K}$ is solid and
	\begin{equation}
		\label{eq:intersection SAandS-K}
		S_{A}^0 \cap \intt\, S_{-K} = \emptyset.
	\end{equation}
	Then, there is $(x^*, \alpha) \in K^{a\circ}$ such that $\bar x \in \WEff(\Omega \mid f, C_\psi(x^*, \alpha))$. Moreover, if $\varphi$ is $-C_\psi(x^*, \alpha)$ representing, then the other equivalent assertions given in Proposition \ref{prop:scal_result_WEff_BP} are valid whenever the corresponding assumptions in Proposition \ref{prop:scal_result_WEff_BP} are satisfied.
\end{theorem}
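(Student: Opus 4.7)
The plan is to combine the weak nonlinear cone separation result from Proposition~\ref{prop:weak_cone_separation_if} with the basic characterization of weak efficiency with respect to Bishop–Phelps type cones from Proposition~\ref{prop:scal_result_WEff_BP}. The hypotheses of the theorem have been arranged exactly so that the inputs of Proposition~\ref{prop:weak_cone_separation_if} (with $A := \mathbb{A}(\bar x)$) are satisfied: both $K$ and $A$ are nontrivial cones endowed with normlike-bases $B_K$, $B_A$ induced by $\psi$, the cone $K$ is solid, $S_{-K}$ is solid, and the separation hypothesis $S_{A}^0 \cap \intt\, S_{-K} = \emptyset$ is imposed as \eqref{eq:intersection SAandS-K}. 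So the first step is just to quote that proposition.

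Applying Proposition~\ref{prop:weak_cone_separation_if}, I would obtain some $(x^*, \alpha) \in K^{a\circ}$ satisfying
\[
A \cap \bigl(-\intt\, C_\psi(x^*, \alpha)\bigr) = \emptyset, \quad K \subseteq C_\psi(x^*,\alpha), \quad \intt\,K \subseteq \intt\, C_\psi(x^*, \alpha).
\]
In particular, because $K$ is solid, the last inclusion yields $\intt\, C_\psi(x^*, \alpha) \neq \emptyset$, which will be important when invoking Proposition~\ref{prop:scal_result_WEff_BP}.

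Next, I would translate the first separation inclusion into weak efficiency. For any $x \in \Omega$, choosing $\lambda = 1$ and $k = 0 \in K$ in the definition $\mathbb{A}(\bar x) = \mathbb{R}_+ \cdot (f[\Omega] + K - f(\bar x))$ shows that $f(x) - f(\bar x) \in A$. Since $A \cap (-\intt\, C_\psi(x^*, \alpha)) = \emptyset$, it follows that $f(x) - f(\bar x) \notin -\intt\, C_\psi(x^*, \alpha)$, i.e., there is no $x \in \Omega$ with $f(x) <_{C_\psi(x^*, \alpha)} f(\bar x)$, which by Definition~\ref{def:wEff} means $\bar x \in \WEff(\Omega \mid f, C_\psi(x^*, \alpha))$.

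Finally, if $\varphi$ is $-C_\psi(x^*, \alpha)$ representing, then, since $\intt\, C_\psi(x^*, \alpha) \neq \emptyset$ has already been established, Proposition~\ref{prop:scal_result_WEff_BP} applies and gives the equivalent assertions there. I do not expect any technical obstacle here: the only step that requires a small argument is the membership $f(x) - f(\bar x) \in A$, which is immediate from the definition of $\mathbb{A}$; the rest is a direct chaining of the two cited propositions.
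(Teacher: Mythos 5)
Your proposal is correct and follows essentially the same route as the paper's own proof: invoke Proposition \ref{prop:weak_cone_separation_if} to obtain $(x^*,\alpha)\in K^{a\circ}$ with $A\cap(-\intt\,C_\psi(x^*,\alpha))=\emptyset$, use $f[\Omega]-f(\bar x)\subseteq A$ to conclude $\bar x\in\WEff(\Omega\mid f,C_\psi(x^*,\alpha))$, and then apply Proposition \ref{prop:scal_result_WEff_BP} using the solidity of $C_\psi(x^*,\alpha)$ inherited from $\intt\,K\neq\emptyset$. Your explicit verification that $f(x)-f(\bar x)\in A$ (via $0\in K$) is a minor elaboration of what the paper states without comment.
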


\begin{proof} 
	Under our assumptions, by Proposition \ref{prop:weak_cone_separation_if} there exists $(x^*, \alpha) \in K^{a\circ}$ such that 
	$A \cap -\intt\, C_\psi(x^*, \alpha) = \emptyset$ and $\intt\,K \subseteq C_\psi^>(x^*, \alpha) = \intt\, C_\psi(x^*, \alpha)$.
	Taking into account that $f[\Omega] - f(\bar x) \subseteq A$, we have 
	$(f[\Omega] - f(\bar x)) \cap -\intt\, C_\psi(x^*, \alpha) \subseteq A \cap -\intt\, C_\psi(x^*, \alpha) = \emptyset,$
	hence  $\bar x \in \WEff(\Omega \mid f, C_\psi(x^*, \alpha))$. Since $C^>_\psi(x^*, \alpha) \neq \emptyset$ and $\intt\,C_\psi(x^*, \alpha) \neq \emptyset$, Proposition \ref{prop:scal_result_WEff_BP} yields the rest of the conclusion.
\end{proof}

In the convex case (i.e., $K$ and $\mathbb{A}(\bar x)$ are convex cones), using Proposition \ref{prop:weak_cone_separation_iff_convex} we are able to replace the separation condition \eqref{eq:intersection SAandS-K} by $\bar x \in {\rm WEff}(\Omega \mid f, K)$, as shown in the following result.

\begin{corollary}[convex case] \label{cor:main_scalarization_result_WEff_convex}
	Assume that $Y$ is a real topological-linear space, $K$ is a nontrivial, solid, convex cone, $\mathbb{A}(\cdot) := \mathbb{R}_+ \cdot (f[\Omega] - f(\cdot))$ or $\mathbb{A}(\cdot) := \mathbb{R}_+ \cdot (f[\Omega] + K - f(\cdot))$, $\bar x \in {\rm WEff}(\Omega \mid f, K)$, and $A := \mathbb{A}(\bar x)$ 
    is nontrivial and convex. Moreover, suppose that $B_{K}$ and $B_A$ are normlike-bases for $K$ and $A$, respectively, and $S_{-K}$ is solid.\\
	Then, the conclusion of Theorem \ref{th:main_scalarization_result_WEff} holds.
\end{corollary}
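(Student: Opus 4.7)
The plan is to adapt the proof of Theorem \ref{th:main_scalarization_result_WEff}, replacing its use of the sufficient cone separation result (Proposition \ref{prop:weak_cone_separation_if}) by the equivalence statement valid in the convex case, Proposition \ref{prop:weak_cone_separation_iff_convex}. Most of the hypotheses of the latter are already present: $K$ and $A$ are nontrivial, convex cones with normlike-bases $B_K$ and $B_A$, and $K$ is solid. The only bookkeeping point is that Proposition \ref{prop:weak_cone_separation_iff_convex} requires $S_{-K}$ to be solid, whereas the corollary hypothesizes this for $S_K$. Since $\psi$ is a seminorm we have $\psi(-y)=\psi(y)$, hence $B_{-K}(\psi)=-B_K(\psi)$ and therefore $S_{-K}(\psi)=-S_K(\psi)$; the linear homeomorphism $y\mapsto -y$ then makes $S_{-K}$ solid if and only if $S_K$ is solid.

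Next I would verify condition $2^\circ$ of Proposition \ref{prop:weak_cone_separation_iff_convex}, namely $A\cap \intt(-K)=\emptyset$, using only the weak efficiency of $\bar x$. Suppose, for contradiction, that $a\in A\cap \intt(-K)$. By the definition $A=\mathbb{R}_+\cdot(f[\Omega]+K-f(\bar x))$ there exist $t\ge 0$, $x\in\Omega$, $k\in K$ with $a=t(f(x)+k-f(\bar x))$. Since $K\neq Y$, we have $0\notin\intt(-K)$, so $a\neq 0$ and therefore $t>0$. This gives $f(x)-f(\bar x)=t^{-1}a-k\in \intt(-K)+(-K)\subseteq \intt(-K)$, where the last inclusion holds because $-K$ is a convex cone with nonempty interior. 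Thus $f(x)<_K f(\bar x)$, contradicting $\bar x\in\WEff(\Omega\mid f,K)$.

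With condition $2^\circ$ verified, the equivalence $2^\circ\iff 3^\circ$ of Proposition \ref{prop:weak_cone_separation_iff_convex} delivers a pair $(x^*,\alpha)\in K^{a\circ}$ satisfying the separation properties \eqref{sep_conditions_weak}. From here the argument mirrors the second half of the proof of Theorem \ref{th:main_scalarization_result_WEff}: the inclusion $f[\Omega]-f(\bar x)\subseteq A$ combined with $A\cap(-\intt\,C_\psi(x^*,\alpha))=\emptyset$ yields $\bar x\in\WEff(\Omega\mid f,C_\psi(x^*,\alpha))$. Moreover, from \eqref{sep_conditions_weak} one has $\emptyset\neq\intt K\subseteq \intt C_\psi(x^*,\alpha)=C^>_\psi(x^*,\alpha)$, so Proposition \ref{prop:scal_result_WEff_BP} applies and yields the equivalent scalarization assertions for any $-C_\psi(x^*,\alpha)$ representing $\varphi$.

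I expect no substantive obstacle, only the small care needed at two places: first, reconciling the $S_K$ versus $S_{-K}$ solidity hypothesis via the seminorm symmetry, and second, invoking the standard convex-cone identity $\intt(-K)+(-K)\subseteq\intt(-K)$ to propagate the weak efficiency from $\bar x$ down through the generating representation of $A$.
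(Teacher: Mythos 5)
Your proof is correct and takes essentially the same route as the paper, whose proof is the one-line observation that the claim ``follows from Theorem \ref{th:main_scalarization_result_WEff} and Proposition \ref{prop:weak_cone_separation_iff_convex}''. You merely make explicit the two details the paper leaves implicit: that $\bar x \in {\rm WEff}(\Omega \mid f, K)$ forces $A \cap \intt(-K) = \emptyset$ (via $\intt(-K) + (-K) \subseteq \intt(-K)$ for the convex cone $-K$), and that solidity of $S_K$ and of $S_{-K}$ are equivalent by the symmetry of the seminorm $\psi$.
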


\begin{proof}
Taking into account that $K$ is assumed to be nontrivial, solid and convex, we have
$$
\bar x \in {\rm WEff}(\Omega \mid f, K) \iff \bar x \in {\rm PEff}_\mathbb{A}(\Omega \mid f, (\intt\,K)\cup\{0\}) \iff A \cap (-\intt\,K) = \emptyset.
$$
Proposition \ref{prop:weak_cone_separation_iff_convex} yields
$$S_{A}^0 \cap \intt \,S_{-K} = \emptyset \iff A \cap (-\intt\,K) = \emptyset.$$
Thus, the conclusion follows from Theorem \ref{th:main_scalarization_result_WEff}.
\end{proof}

\begin{remark} \label{rem:opt_cond_weak}
In view of Remark \ref{rem:EffBPConeSubsetWEffBPConeSubsetPEffHenig}, since the point $\bar x \in \WEff(\Omega \mid f, C_\psi(x^*, \alpha))$  in the conclusion of Theorem \ref{th:main_scalarization_result_WEff} 
belongs to ${\rm WEff}(\Omega \mid f,K)$, the result provides sufficient conditions for weak efficiency. Under the assumptions of Corollary \ref{cor:main_scalarization_result_WEff_convex} (including convexity of both cones) and taking into account Remark \ref{rem:EffBPConeSubsetWEffBPConeSubsetPEffHenig}, we derive the equivalence of the following assertions:
	\begin{itemize}
		\item[$1^\circ$] $\bar x \in {\rm WEff}(\Omega \mid f, K)$.
		\item[$2^\circ$] There is $(x^*, \alpha) \in K^{a\circ} \cap (Y^* \times \Rpos)$ such that $\bar x \in \WEff(\Omega \mid f, C_\psi(x^*, \alpha))$ (or any other equivalent assertion given in Proposition \ref{prop:scal_result_WEff_BP} is valid whenever the corresponding assumptions in Proposition \ref{prop:scal_result_WEff_BP} are satisfied).
	\end{itemize}   
    From this perspective, $2^\circ$ is a necessary and sufficient condition for the weak efficiency of $\bar x \in \Omega$.
\end{remark}

\begin{remark} 
	Recall that, for any set $\Omega \subseteq Y,$ the quasi-interior of $\Omega$ is the set ${\rm qi}\, \Omega := \{x \in \Omega \mid \cl(\mathbb{R}_+ \cdot (\Omega - x)) = Y\}$. In order to ensure that the assumption $A$ is nontrivial and convex from Corollary \ref{cor:main_scalarization_result_WEff_convex} is satisfied it is sufficient to impose that $Y$ is a real Hausdorff locally convex space and $f$ is $K$-convexlike (i.e., $f[\Omega] + K$ is convex). 
	Indeed, assume that $\bar x \in {\rm WEff}(\Omega \mid f, K)$. 
	Since $f$ is $K$-convexlike, we infer that $A = \mathbb{R}_+\cdot (f[\Omega] + K - f(\bar x))$ is a convex cone in $Y$.  By \cite[Lem. 3.3 (4)]{Khazayel2021b} we know that $f(\bar x) \notin \intt(f[\Omega] + K)$. Since the interior and the quasi interior of a solid, convex set $f[\Omega] + K$ in a real  Hausdorff locally convex space coincide (see \cite[p. 66-67]{GoeRiaTamZal2023}), we have $A \subseteq {\rm cl}(\mathbb{R}_+ \cdot (f[\Omega] + K - f(\bar x))) \neq Y$. Moreover, since $K \neq \{0\}$ we have $A \neq \{0\}$ as well, hence $A$ is nontrivial.
\end{remark}

\begin{remark} \label{rem:normlike-base-property}
    In this and upcoming sections, we consider a seminorm $\psi: Y \to \mathbb{R}$ which is positive on $K \setminus \{0\}$ and on $\mathbb{A}(\bar x) \setminus \{0\}$ (i.e., $B_K = B_{K}(\psi)$ and $B_A = B_A(\psi)$ are normlike-bases for $K$ and $A = \mathbb{A}(\bar x)$). Note that $\psi$ does not have to be continuous in our approach.  In Example \ref{ex:normlike-bases} (b,c), we discussed ways to construct such seminorms. 
    Of course, changing the $\bar x$ may alter the cone $\mathbb{A}(\bar x)$, so one may require another seminorm $\psi$ (at least if $\psi$ is not a norm). In general, one must be careful when choosing a seminorm. Ideally, one would want a seminorm $\psi$ that is positive on $K\setminus\{0\}$ and on $\mathbb{A}(\bar x)\setminus \{0\}$ for all points $\bar x \in \Omega$ which are relevant in the results (e.g. $\bar x \in {\rm WEff}(\Omega \mid f, K)$ in Corollary \ref{cor:main_scalarization_result_WEff_convex}). This is because the scalarized problems appearing in the conclusions of the results are then defined with respect to the same seminorm $\psi$.

    It is easy to see that $K \subseteq \mathbb{R}_+ \cdot (f[\Omega] + K - f(\bar x)) = \mathbb{A}(\bar x)$. Hence, if $\mathbb{A}(\bar x)$ has the normlike-base $B_{\mathbb{A}(\bar x)}(\psi)$, then $K$ has the normlike-base $B_{K}(\psi)$. In general, it makes sense to search for a (not necessarily continuous) seminorm $\psi$ which is in fact a norm, and so $B_K(\psi)$ is a norm-base for $K$. Then, the normlike-base property for any other cone $A = \mathbb{A}(\bar x)$ (respectively, for any cone from a family of cones, e.g. $\{\mathbb{A}(\bar x) \mid \bar x \in {\rm WEff}(\Omega \mid f, K)\}$) is automatically satisfied. Of course, there are infinite-dimensional non-normable (real Hausdorff locally convex) spaces where no continuous norms exist (see also Example \ref{ex:BPtypeCones} (d)). In this case, the properties that $B_{K}(\psi)$ and $B_{A}(\psi)$ are normlike-bases for $K$ and $A$ (with continuous $\psi$) are more difficult to ensure. 
   
    Using the ideas in Example \ref{ex:normlike-bases} (c) one can also state the following two decomposition approaches (assume that $K \subseteq Y$ is a nontrivial solid convex cone):
    \begin{itemize}
        \item[$\bullet$] Suppose that $f[\Omega]$ is a finite union of nonempty convex sets $Y_i$, $i = 1, \ldots ,l$ (i.e., $f[\Omega] = \bigcup_{i = 1}^l Y_i$). Then, 
    $A := \mathbb{A}(\bar x) = \mathbb{R}_+ \cdot (f[\Omega] + K - f(\bar x)) = \bigcup_{i = 1}^l \mathbb{R}_+ \cdot (Y_i + K - f(\bar x))$. Define $A_i := \mathbb{R}_+ \cdot (Y_i + K - f(\bar x))$ for all $i = 1, \ldots, l$.
    If $K$ is convex, then $Y_i + K$ is convex as well, hence $A$ is a finite union of convex cones (this case is studied in Example \ref{ex:normlike-bases} (c)). If, in addition, $y_i^* \in A^\#_i \neq \emptyset$ for all $i = 1, \ldots, l$ (e.g. ensured by the Krein-Rutman theorem), then 
    one can define a continuous seminorm $\psi$ (which is not necessarily a norm) by 
    \eqref{eq:seminorm_1} 
    or by
    \eqref{eq:seminorm_2}. 
    In both cases, $B_A(\psi)$ is a normlike-base for $A$ (hence $B_K(\psi)$ for $K$ as well).
        \item[$\bullet$] For any $\bar x \in {\rm WEff}(\Omega \mid f, K)$ we know that $\mathbb{A}(\bar x) \cap (-\intt\,K) = \emptyset$. Suppose that the (reverse-convex) nontrivial closed solid cone $\widehat{A} := Y \setminus (-\intt\,K)$ can be decomposed into a finite union of nontrivial pointed closed convex cones, i.e., $\widehat{A} = \bigcup_{i = 1}^l A_i$. Note that $K \cap (-\intt\,K) = \ell(K) \cap (-\intt\,K) = \emptyset$, if $K \neq Y$ (since in this case $\intt\,K \subseteq K \setminus \ell(K)$, or equivalently, $-\intt\,K \subseteq (-K) \setminus \ell(K)$). Then, for any $\bar x \in {\rm WEff}(\Omega \mid f, K)$, we have
    $
    K \cup \mathbb{A}(\bar x) \subseteq \widehat{A} = \bigcup_{i = 1}^l A_i.
    $
    If, in addition, $y_i^* \in A^\#_i \neq \emptyset$ for all $i = 1, \ldots, l$, then one can again define a continuous seminorm $\psi$ by 
    \eqref{eq:seminorm_1} 
    or by
    \eqref{eq:seminorm_2}. 
    In both cases, for any $\bar x \in {\rm WEff}(\Omega \mid f, K)$, the sets $B_K(\psi)$ and $B_{\mathbb{A}(\bar x)}(\psi)$ are normlike-bases for $K$ and $\mathbb{A}(\bar x)$.

    \begin{example}
 Consider $Y := \mathbb{R} \times \mathbb{R}$ (endowed with the Euclidean norm), $K := \mathbb{R}_+ \times \mathbb{R}_+$, and
$\widehat{A} := Y \setminus (-{\rm int}\,K) = (\mathbb{R}_+ \times \mathbb{R}_+) \cup (-\mathbb{R}_+ \times \mathbb{R}_+) \cup (\mathbb{R}_+ \times -\mathbb{R}_+).
$
Hence, $\widehat{A} = A_1 \cup A_2 \cup A_3$ for some nontrivial pointed solid closed convex cones $A_1, A_2, A_3 \subseteq \widehat{A}$.  Take $x_1^* \in A_1^\#$,  $x_2^* \in A_2^\#$ and  $x_3^* \in A_3^\#$. Thus, $\psi := |x_1^*| + |x_2^*| + |x_3^*|$ (respectively, $\psi := \max\{|x_1^*|, |x_2^*|, |x_3^*|\}$) is a continuous seminorm (in fact, it is a norm, since $[x_{1}^*, x_2^*, x_3^*](x) = 0$ if and only if $x = (0,0)$).
Note that $\psi$ is positive on $\widehat{A} \setminus \{0\}$. Moreover, for $\bar x \in {\rm WEff}(\Omega \mid f, K)$ (i.e., $\mathbb{A}(\bar x) \cap (-{\rm int}\,K) = \emptyset$), we have $K \cup \mathbb{A}(\bar x) \subseteq \widehat{A}$. Consequently, $B_{\mathbb{A}(\bar x)}(\psi)$ is a normlike-base (in fact, it is a norm-base) for $\mathbb{A}(\bar x)$, while $B_{K}(\psi)$ is a normlike-base (norm-base) for $K$.
In this example, there is no seminorm on $Y$ with a nontrivial kernel that works, since any line through zero intersects $\widehat{A}$ in at least one nonzero point. However, as the example shows, we can construct a norm with a trivial kernel that works.
\end{example}
    \end{itemize}

Note that the second decomposition approach has the advantage (in comparison to the first one) that the constructed seminorm does not depend on the considered point $\bar x \in {\rm WEff}(\Omega \mid f, K)$. We would also like to mention that in infinite-dimensional spaces such decompositions approaches can be more problematic, since a seminorm $\psi$ constructed as in
\eqref{eq:seminorm_1} 
or 
\eqref{eq:seminorm_2}
are not norms (so they have a nontrivial linear subspace contained in the kernel of $\psi$, and this kernel has indeed infinite dimension, because $[x_{1}^*, \ldots, x_m^*](x) = 0_{\mathbb{R}^m}$ has an infinite-dimensional kernel; note that  $Y$ has infinite dimension but $\mathbb{R}^m$ has finite dimension). 
However, note that not always in infinite-dimensional (non-normable) spaces nontrivial kernels of seminorms have infinite dimension. This topic is also part of future research.
\end{remark}

\begin{remark}
   Note that the Bishop-Phelps type scalarization method in vector optimization for examples with a nonconvex cone $K \subseteq Y$ and a nonconvex set $f[\Omega] + K \subseteq Y$ is generally not expected to work properly. This scalarization method is based on Bishop-Phelps type cones, which are in fact convex cones. These cones typically serve as dilating/enlargement cones for a possibly nonconvex given cone $K$. The Bishop-Phelps type scalarization method aims to compute elements of
    $
    {\rm WEff}(\Omega \mid f, C_{\psi}(x^*, \alpha))
    $
    and ${\rm Eff}(\Omega \mid f, C_{\psi}(x^*, \alpha))$), respectively. Propositions \ref{prop:scal_result_Eff_BP} and \ref{prop:scal_result_WEff_BP} show how these sets are connected to minimizers of scalarized problems based on Bishop-Phelps type cone-representing functions.
\end{remark}

\subsection{Scalarization results for the concept of proper efficiency with respect to a cone-valued map} \label{sec:scalarization_PEff_A}

We are ready to present some further main scalarization results which ensure that an $\mathbb{A}$-properly efficient solution of the vector problem \eqref{vector_problem_P} is a solution of the scalar problems \eqref{scal_problem_Kas} and \eqref{scal_problem_GerPasSer} for some parameters $a \in Y$ and $k \in C_\psi(x^*, \alpha) \setminus \ell(C_\psi(x^*, \alpha))$. Remember that $\mathbb{A}: \Omega \to 2^Y$ is a cone-valued map.

\begin{theorem}\label{th:main_scalarization_result_PEff}
	Assume that $Y$ is a real locally convex space, $K \subseteq Y$ is a nontrivial cone, $\bar x \in \Omega$, and $A := \mathbb{A}(\bar x)$ is a nontrivial cone in $Y$ with $f[\Omega] - f(\bar x) \subseteq A$. Moreover, suppose that $B_{K}$ and $B_A$ are normlike-bases for $K$ and $A$, respectively. Suppose that one of the sets $\cl\, S_{-K}$ and $\cl\,S_{A}^0$ is compact, and
	\begin{equation}
		\label{eq:cl_intersection}
		({\rm cl}\,S_{A}^0) \cap ({\rm cl}\, S_{-K}) = \emptyset.
	\end{equation}
	Then, there is $(x^*, \alpha) \in K^{a\#} \cap (Y^* \times \Rpos)$ such that $\bar x \in \Eff(\Omega \mid f, C_\psi(x^*, \alpha))$. Moreover, if $\varphi$ is $-C_\psi(x^*, \alpha)$ representing, 
    then the other equivalent assertions given in Proposition \ref{prop:scal_result_Eff_BP} are valid whenever the corresponding assumptions in Proposition \ref{prop:scal_result_Eff_BP} are satisfied.
\end{theorem}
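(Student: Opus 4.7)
The plan is to synthesize the strict cone separation result Proposition \ref{prop:strict_cone_separation_if} with the basic scalarization result Proposition \ref{prop:scal_result_Eff_BP}; essentially everything is set up so that the theorem is a direct consequence of these two earlier results.

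First I would verify that the hypotheses of Proposition \ref{prop:strict_cone_separation_if} are met. The underlying space $Y$ is real locally convex, $K$ and $A$ are nontrivial cones with normlike-bases $B_K$ and $B_A$, one of $\cl\,S_{-K}$ and $\cl\,S_A^0$ is compact by assumption, and the separation condition \eqref{eq:cl_intersection} is literally the condition \eqref{eq:sep_condition_1} required by Proposition \ref{prop:strict_cone_separation_if}. Applying that proposition directly yields a pair $(x^*, \alpha) \in K^{a\#} \cap (Y^* \times \mathbb{P})$ such that \eqref{sep_conditions_strict} holds; in particular,
\begin{equation*}
  A \cap (-C_\psi(x^*, \alpha)) = \{0\} \quad \text{and} \quad K \setminus \{0\} \subseteq C_\psi^>(x^*, \alpha) = \cor\,C_\psi(x^*, \alpha).
\end{equation*}

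Next I would transfer this to the efficiency statement. Using the inclusion $f[\Omega] - f(\bar x) \subseteq A$, the first part of \eqref{sep_conditions_strict} gives
\begin{equation*}
  (f[\Omega] - f(\bar x)) \cap (-C_\psi(x^*, \alpha)) \subseteq A \cap (-C_\psi(x^*, \alpha)) = \{0\},
\end{equation*}
which by Definition \ref{def:Eff} means exactly that $\bar x \in \Eff(\Omega \mid f, C_\psi(x^*, \alpha))$.

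Finally, to invoke Proposition \ref{prop:scal_result_Eff_BP}, I need $C_\psi(x^*, \alpha) \neq \ell(C_\psi(x^*, \alpha))$ together with the assumption that $\varphi$ is $-C_\psi(x^*, \alpha)$ representing. The first of these follows from the nontriviality of $K$ combined with the inclusion $K \setminus \{0\} \subseteq C^>_\psi(x^*, \alpha)$ above, since $\ell(C_\psi(x^*, \alpha)) = \{y \in Y \mid x^*(y) = 0,\, \alpha \psi(y) = 0\}$ by Lemma \ref{lem:properties_BP} ($4^\circ$) is disjoint from $C^>_\psi(x^*, \alpha)$. Hence Proposition \ref{prop:scal_result_Eff_BP} applies, and the equivalence of its assertions $1^\circ$--$3^\circ$ (and, where applicable, $4^\circ$--$5^\circ$) completes the proof. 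There is no genuine obstacle here; the only thing to check carefully is the disjointness $C^>_\psi(x^*, \alpha) \cap \ell(C_\psi(x^*, \alpha)) = \emptyset$, which guarantees the nontriviality condition needed for Proposition \ref{prop:scal_result_Eff_BP}.
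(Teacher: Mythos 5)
Your proposal is correct and follows essentially the same route as the paper: apply Proposition \ref{prop:strict_cone_separation_if} to obtain $(x^*,\alpha)\in K^{a\#}\cap(Y^*\times\mathbb{P})$ with \eqref{sep_conditions_strict}, deduce $\bar x\in\Eff(\Omega\mid f,C_\psi(x^*,\alpha))$ from $f[\Omega]-f(\bar x)\subseteq A$, and then invoke Proposition \ref{prop:scal_result_Eff_BP}. The only cosmetic difference is that the paper certifies $C_\psi(x^*,\alpha)\neq\ell(C_\psi(x^*,\alpha))$ by citing Lemma \ref{lem:properties_BP} ($7^\circ$), whereas you unpack the same fact directly from the disjointness of $C^>_\psi(x^*,\alpha)$ and $\ell(C_\psi(x^*,\alpha))$; both are valid.
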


\begin{remark}
    If the stronger condition $f[\Omega] + K - f(\bar x) \subseteq A$ (instead of $f[\Omega] - f(\bar x) \subseteq A$) is satisfied, and $B_A(\psi)$ is a normlike-base for $A$, then $B_{K}(\psi)$ is a normlike-base for $K$ (since $K \subseteq A$).
\end{remark}

\begin{proof}
	By Proposition \ref{prop:strict_cone_separation_if} there exists $(x^*, \alpha) \in K^{a\#} \cap (Y^* \times \Rpos)$ such that 
	$A \cap -C_\psi(x^*, \alpha) = \{0\}$ and $K \setminus \{0\} \subseteq C_\psi^>(x^*, \alpha) = \cor\, C_\psi(x^*, \alpha)$.
	Then, taking into account that $f[\Omega] - f(\bar x) \subseteq A$, we have 
	$(f[\Omega] - f(\bar x)) \cap -C_\psi(x^*, \alpha) \subseteq A \cap -C_\psi(x^*, \alpha) = \{0\},$
	hence $\bar x \in \Eff(\Omega \mid f, C_\psi(x^*, \alpha))$. Finally, observing that $C_\psi(x^*, \alpha) \neq \ell(C_\psi(x^*, \alpha))$ by Lemma \ref{lem:properties_BP} ($7^\circ$),  Proposition \ref{prop:scal_result_Eff_BP} yields the rest of the conclusion.
\end{proof}

\begin{remark} \label{rem:EffBPConeIsInPEffHenig}
	In view of Remark \ref{rem:EffBPConeSubsetWEffBPConeSubsetPEffHenig}, if $K$ is solid or $\psi$ is continuous, then the point $\bar x \in \Eff(\Omega \mid f, C_\psi(x^*, \alpha))$ in the conclusion of Theorem \ref{th:main_scalarization_result_PEff} belongs to ${\rm PEff}_{He}(\Omega \mid f,K)$, hence the result provides sufficient conditions for proper efficiency in the sense of Henig.
\end{remark}

Using Proposition \ref{prop:separation_conditions_1} we are able to replace the separation condition \eqref{eq:cl_intersection} 
by $0 \notin \cl\, S_{-K}$ and $\bar x \in {\rm PEff}_{\mathbb{A}}(\Omega \mid f, K)$ as well as some more specific conditions that we formulate below.

\begin{corollary}\label{cor:main_scalarization_result_PEff_A_BP}
	Assume that $Y$ is a real locally convex space, $K \subseteq Y$ is a nontrivial, closed, convex cone with $0 \notin \cl\, S_{-K}$, $\bar x \in {\rm PEff}_{\mathbb{A}}(\Omega \mid f, K)$, and $A := \mathbb{A}(\bar x)$ is a cone in $Y$ with $\{0\} \neq f[\Omega] - f(\bar x) \subseteq A$. Suppose that $B_{K}$ and $B_A$ are normlike-bases for $K$ and $A$, and that one of the sets  ${\rm cl}\, S_{-K}$ and ${\rm cl}\,S_{A}^0$ is compact. Moreover, assume that one of the following statements is valid:
	\begin{enumerate}
		\item[a)] $A$ is closed and convex.
		\item[b)] There is $(x^*, \alpha) \in Y^* \times \Rpos$ such that \eqref{eq:sep_condition_9}
		is valid, and one of the conditions \eqref{ass:com1}--\eqref{ass:com4} holds.
	\end{enumerate}
	Then, the conclusion of Theorem \ref{th:main_scalarization_result_PEff} is valid.
\end{corollary}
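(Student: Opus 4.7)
The plan is to reduce this corollary to Theorem \ref{th:main_scalarization_result_PEff} by deriving the separation condition \eqref{eq:cl_intersection} via Proposition \ref{prop:separation_conditions_1}. The hypotheses of the corollary are tailored so that both prerequisites \eqref{eq:sep_condition_2} and \eqref{eq:sep_condition_3} of that proposition hold, and one of the supplementary alternatives a) or b) of the proposition is assumed outright.

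First I would verify the two separation prerequisites. Condition \eqref{eq:sep_condition_3}, namely $0 \notin \cl\,S_{-K}$, is given directly. For \eqref{eq:sep_condition_2}, I would use that $K$ is closed and convex to rewrite $\cl(\conv(-K)) = -K$, and then apply Definition \ref{def:pEff_A}: since $\bar x \in {\rm PEff}_{\mathbb{A}}(\Omega \mid f, K)$, one has $A \cap (-K) = \mathbb{A}(\bar x) \cap (-K) = \{0\}$, which is exactly \eqref{eq:sep_condition_2}.

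Next I would apply Proposition \ref{prop:separation_conditions_1}: the alternative a) or b) in the corollary matches the corresponding alternative in the proposition verbatim, so the proposition yields \eqref{eq:sep_condition_1}, i.e., $({\rm cl}\,S_A^0) \cap ({\rm cl}\,S_{-K}) = \emptyset$. This is precisely the hypothesis \eqref{eq:cl_intersection} of Theorem \ref{th:main_scalarization_result_PEff}.

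Finally I would check the remaining hypotheses of Theorem \ref{th:main_scalarization_result_PEff}: $Y$ is a real locally convex space and $K$ is a nontrivial cone (both given); $A$ is a nontrivial cone, since $A \neq \{0\}$ follows from $\{0\} \neq f[\Omega] - f(\bar x) \subseteq A$, while $A \neq Y$ follows from $A \cap (-K) = \{0\}$ together with $-K \neq \{0\}$; the inclusion $f[\Omega] - f(\bar x) \subseteq A$, the existence of normlike-bases $B_K, B_A$, and the compactness of one of $\cl\,S_{-K}$, $\cl\,S_A^0$ are all given. Invoking Theorem \ref{th:main_scalarization_result_PEff} then delivers the desired $(x^*,\alpha) \in K^{a\#} \cap (Y^* \times \mathbb{P})$ with $\bar x \in \Eff(\Omega \mid f, C_\psi(x^*,\alpha))$, as well as the equivalent assertions of Proposition \ref{prop:scal_result_Eff_BP} whenever $\varphi$ is $-C_\psi(x^*,\alpha)$ representing. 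The only mildly delicate point is verifying nontriviality of $A$, but even this is immediate from the assumptions; beyond that, the proof is a direct chaining of the two preceding results, so no substantive obstacle is expected.
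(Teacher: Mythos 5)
Your proposal is correct and follows essentially the same route as the paper's own proof: verify \eqref{eq:sep_condition_2} and \eqref{eq:sep_condition_3} from the definition of $\mathbb{A}$-proper efficiency and the closed convexity of $K$, invoke Proposition \ref{prop:separation_conditions_1} under alternative a) or b) to obtain \eqref{eq:cl_intersection}, confirm the nontriviality of $A$, and conclude via Theorem \ref{th:main_scalarization_result_PEff}. No gaps.
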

\begin{proof}
	Using Proposition \ref{prop:separation_conditions_1} a) and b) respectively, under our standing assumptions a) and b), we get that $A \cap \cl(\conv(-K)) = A \cap (-K) = \{0\}$ and $0 \notin \cl\, S_{-K}$ imply  \eqref{eq:cl_intersection}. Note that the nontriviality of $A$ follows from $\{0\} \neq f[\Omega] - f(\bar x) \subseteq A$, $K$ is nontrivial and $A \cap (-K) = \{0\}$.
	Then, the result follows from Theorem \ref{th:main_scalarization_result_PEff}.
\end{proof}

In the case that the cone $K$ is given by a Bishop-Phelps cone based on a continuous norm (see the upcoming Example \ref{ex:l2spaceBPtypeCone} or Jahn and Ha \cite{HaJahn2021} for examples of spaces with underlying ordering $K$ in the normed setting), one can state the following result:
\begin{corollary}\label{cor:main_scalarization_result_PEff_A_BP_2}
	Assume that $Y$ is a real locally convex space with dimension two or higher, $||\cdot||: Y \to \mathbb{R}$ is a continuous norm, $K := C_{||\cdot||}(x^*, \alpha)$ for some $(x^*, \alpha) \in (Y^* \setminus \{0\}) \times \mathbb{P}$ is nontrivial, $\bar x \in {\rm PEff}_{\mathbb{A}}(\Omega \mid f, K)$, and $A := \mathbb{A}(\bar x)$ is a cone in $Y$ with $\{0\} \neq f[\Omega] - f(\bar x) \subseteq A$. Suppose that one of the sets  ${\rm cl}\, S_{-K}(||\cdot||)$ and ${\rm cl}\,S_{A}^0(||\cdot||)$ is compact. Moreover, assume that either $A$ is closed and convex or one of the conditions \eqref{ass:com1}--\eqref{ass:com4} holds.
	Then, the conclusion of Theorem \ref{th:main_scalarization_result_PEff} is valid.
\end{corollary}
\begin{proof}
    Follows immediately from Corollary \ref{cor:main_scalarization_result_PEff_A_BP}. Note that Lemma \ref{lem:BPcone} ($1^\circ$) ensures the validity of \eqref{eq:sep_condition_9}.
\end{proof}

In the remaining part of the section, we present some more particularized results for the case of a real normed space $Y$ by using Proposition \ref{prop:separation_conditions_2} and Lemma \ref{lem:WeakCompact}.

\begin{corollary}       \label{cor:main_scalarization_result_PEff_A_normed}
	Assume that ($Y, ||\cdot||$) is a real normed space with dimension two or higher, $K \subseteq Y$ is a nontrivial, closed, convex cone with $0 \notin \cl\, S_{-K}(||\cdot||)$, $\bar x \in {\rm PEff}_{\mathbb{A}}(\Omega \mid f, K)$, and $A := \mathbb{A}(\bar x)$ is a cone in $Y$ with $\{0\} \neq f[\Omega] - f(\bar x) \subseteq A$. Moreover, assume that one of the following statements is valid:
	\begin{enumerate}
		\item[a)] $B_A(||\cdot||)$ and ${\rm cl}\, S_{-K}(||\cdot||)$ are compact.
		\item[b)] $A$ is weakly closed, and either $Y$ is reflexive or ${\rm cl}\, S_{-K}(||\cdot||)$ and $\cl_w\,B_A(||\cdot||)$ are weakly compact and $0 \notin \cl_w\,B_A(||\cdot||)$.
	\end{enumerate}
	Then, there is a norm $||\cdot||' \sim ||\cdot||$ 
	and a pair $(x^*, \alpha) \in K^{a\#}(||\cdot||') \cap (Y^* \times \mathbb{P})$ such that $\bar x \in \Eff(\Omega \mid f, C_{||\cdot||'}(x^*, \alpha))$. 
	Moreover, if $\varphi$ is $-C_{||\cdot||'}(x^*, \alpha)$ representing, then the other equivalent assertions given in Proposition \ref{prop:scal_result_Eff_BP}, applied for $||\cdot||'$ in the role of $\psi$, are valid whenever the corresponding assumptions in Proposition \ref{prop:scal_result_Eff_BP} are satisfied.
\end{corollary}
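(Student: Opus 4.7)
The plan is to reduce this to Theorem \ref{th:main_scalarization_result_PEff} after upgrading the norm via Proposition \ref{prop:separation_conditions_2}. The proof will have four steps: (i) translate the $\mathbb{A}$-proper efficiency hypothesis and the assumption $0 \notin \cl\,S_{-K}(||\cdot||)$ into the classical separation conditions \eqref{eq:sep_condition_2} and \eqref{eq:sep_condition_3}; (ii) invoke Proposition \ref{prop:separation_conditions_2} to obtain an equivalent norm $||\cdot||' \sim ||\cdot||$ satisfying the nonlinear separation condition \eqref{eq:sep_condition_1} with $\psi := ||\cdot||'$; (iii) transfer the compactness assumption from $||\cdot||$ to $||\cdot||'$ via Lemma \ref{lem:WeakCompact}; and (iv) apply Theorem \ref{th:main_scalarization_result_PEff} in the appropriate locally convex topology with $\psi := ||\cdot||'$, and conclude by Proposition \ref{prop:scal_result_Eff_BP}.

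For step (i), since $\bar x \in {\rm PEff}_{\mathbb{A}}(\Omega \mid f, K)$ yields $\mathbb{A}(\bar x) \cap (-K) = \{0\}$ by Definition \ref{def:pEff_A}, and $K$ is closed and convex so that $\cl(\conv(-K)) = -K$, condition \eqref{eq:sep_condition_2} holds. Condition \eqref{eq:sep_condition_3} with $\psi(\cdot) := ||\cdot||$ is precisely the hypothesis $0 \notin \cl\,S_{-K}(||\cdot||)$. Moreover, $A$ is nontrivial: $A \neq \{0\}$ because $\{0\} \neq f[\Omega] - f(\bar x) \subseteq A$, and $A \neq Y$ because $A \cap (-K) = \{0\}$ together with $K \neq \{0\}$. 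For step (ii), Proposition \ref{prop:separation_conditions_2} applies under either assumption a) or b) of our statement, and produces an equivalent norm $||\cdot||'$ on $Y$ such that \eqref{eq:sep_condition_1} holds with $\psi := ||\cdot||'$.

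For step (iii), in case a) the compactness of $B_A(||\cdot||)$ transfers to compactness of $B_A(||\cdot||')$ by Lemma \ref{lem:WeakCompact} ($1^\circ$), while the compactness of $\cl\,S_{-K}(||\cdot||)$ transfers to compactness of $\cl\,S_{-K}(||\cdot||')$ by Lemma \ref{lem:WeakCompact} ($3^\circ$); we then work in the normed space $(Y, ||\cdot||')$. In case b), we endow $Y$ with the weak topology (which is a real locally convex space); the weak compactness of $\cl_w\,B_A(||\cdot||)$ transfers to weak compactness of $\cl_w\,B_A(||\cdot||')$ by Lemma \ref{lem:WeakCompact} ($2^\circ$, $4^\circ$), and similarly for $\cl_w\,S_{-K}(||\cdot||')$ via ($3^\circ$, $4^\circ$). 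In either case, the hypotheses of Theorem \ref{th:main_scalarization_result_PEff} are satisfied with $\psi := ||\cdot||'$ and with $B_{K}$, $B_A$ being normlike-bases with respect to $||\cdot||'$.

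For step (iv), Theorem \ref{th:main_scalarization_result_PEff} then yields a pair $(x^*, \alpha) \in K^{a\#}(||\cdot||') \cap (Y^* \times \mathbb{P})$ with $\bar x \in {\rm Eff}(\Omega \mid f, C_{||\cdot||'}(x^*, \alpha))$, and (since $\varphi_{x^*, \alpha}$ with seminorm $||\cdot||'$ is $-C_{||\cdot||'}(x^*, \alpha)$ representing by Lemma \ref{lem:properties_BP_fcns} ($2^\circ$)) Proposition \ref{prop:scal_result_Eff_BP} delivers the scalarization characterization. The main obstacle is the careful bookkeeping of compactness across the two norms and the two topologies (norm vs.\ weak), especially in case b), where one has to verify that the change of norm $||\cdot||' \sim ||\cdot||$ does not destroy the weak-compactness properties needed to invoke Theorem \ref{th:main_scalarization_result_PEff}; this is exactly the role played by Lemma \ref{lem:WeakCompact}.
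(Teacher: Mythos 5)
Your proposal is correct and follows essentially the same route as the paper's own proof: reduce to Theorem \ref{th:main_scalarization_result_PEff} by first applying Proposition \ref{prop:separation_conditions_2} (using that $A\cap\cl(\conv(-K))=A\cap(-K)=\{0\}$ from $\mathbb{A}$-proper efficiency and the closed convexity of $K$, together with $0\notin\cl\,S_{-K}(||\cdot||)$) to obtain the equivalent norm $||\cdot||'$ with separation condition \eqref{eq:sep_condition_1}, then transferring (weak) compactness of $\cl\,S_{-K}$ via Lemma \ref{lem:WeakCompact} ($3^\circ$, $4^\circ$) and working in the norm topology under a) and the weak topology under b). Your extra remarks (nontriviality of $A$, transfer of compactness of $B_A$) are harmless additions consistent with the paper's argument.
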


\begin{proof} 
	Since $0 \notin \cl\, S_{-K}(||\cdot||)$ and $A \cap \cl(\conv(-K)) = A \cap (-K) = \{0\}$, using Proposition \ref{prop:separation_conditions_2} a) and b) respectively, under our standing assumptions a) and b), one gets that 
	$({\rm cl}\,S_{A}^0(||\cdot||')) \cap ({\rm cl}\, S_{-K}(||\cdot||')) = \emptyset$ 
	for some norm $||\cdot||' \sim ||\cdot||$.
	Applying Lemma \ref{lem:WeakCompact} ($3^\circ$, $4^\circ$), it follows that ${\rm cl}\, S_{-K}(||\cdot||')$ is compact under a), respectively, weakly compact under b). Consequently, the conclusion follows by applying Theorem \ref{th:main_scalarization_result_PEff} for $Y$ endowed with the norm $\psi(\cdot) := ||\cdot||'$ under a), respectively, endowed with the weak topology and $\psi(\cdot) := ||\cdot||'$ under b).
\end{proof}

For the finite-dimensional case, we obtain from Corollary \ref{cor:main_scalarization_result_PEff_A_normed} the following result. 

\begin{corollary} \label{cor:main_scalarization_result_PEff_A_normed_finite_dim}
	Assume that ($Y, ||\cdot||$) is a real finite-dimensional normed space with dimension two or higher, $K \subseteq Y$ is a nontrivial, closed, pointed, convex cone, $\bar x \in {\rm PEff}_{\mathbb{A}}(\Omega \mid f, K)$, and $A := \mathbb{A}(\bar x)$ is a closed cone in $Y$ with $\{0\} \neq f[\Omega] - f(\bar x) \subseteq A$.\\
	Then, the conclusion of Corollary \ref{cor:main_scalarization_result_PEff_A_normed} holds.
\end{corollary}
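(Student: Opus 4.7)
The plan is to reduce \textbf{Corollary \ref{cor:main_scalarization_result_PEff_A_normed_finite_dim}} to the preceding \textbf{Corollary \ref{cor:main_scalarization_result_PEff_A_normed}} by verifying its assumption a). Concretely, I would check three things in this finite-dimensional setting: (i) $0 \notin \cl\, S_{-K}(||\cdot||)$; (ii) $B_A(||\cdot||)$ is compact; and (iii) $\cl\, S_{-K}(||\cdot||)$ is compact. Everything else in the hypothesis list of Corollary \ref{cor:main_scalarization_result_PEff_A_normed} is directly given (nontrivialness of $K$, proper efficiency of $\bar x$, the inclusion $\{0\} \neq f[\Omega] - f(\bar x) \subseteq A$), and the normlike-base property of $B_K$, $B_A$ comes for free since $||\cdot||$ is a norm and $K, A$ are pointed/nontrivial.

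For (i), since $K$ is a nontrivial, closed, pointed, convex cone in a finite-dimensional normed space, the Krein--Rutman theorem (as already invoked in Example \ref{ex:Bishop-Phelps_cone}) gives $K^{\#} \neq \emptyset$. By Remark \ref{rem:clconvKwell-based}, in this finite-dimensional closed-cone setting the conditions $K^{\#}\neq\emptyset$, $\intt\,K^{+}\neq\emptyset$, and well-basedness of $K$ are all equivalent; in particular $K$ (hence $-K$) is well-based. Then the chain of equivalences recalled at the start of the proof of Proposition \ref{prop:separation_conditions_2} yields exactly $0 \notin \cl\, S_{-K}(||\cdot||)$.

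For (ii) and (iii), both facts follow from the standard Heine--Borel property. We have $B_A(||\cdot||) = A \cap \uS_{||\cdot||}$, which is closed (intersection of the closed cone $A$ with the closed unit sphere) and bounded, hence compact. Similarly, $B_{-K}(||\cdot||) \subseteq \uS_{||\cdot||} \subseteq \uB_{||\cdot||}$, and by convexity of the unit ball $S_{-K}(||\cdot||) = \conv(B_{-K}(||\cdot||)) \subseteq \uB_{||\cdot||}$, so $\cl\, S_{-K}(||\cdot||) \subseteq \uB_{||\cdot||}$ is closed and bounded, thus compact.

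With assumption a) of Corollary \ref{cor:main_scalarization_result_PEff_A_normed} fully verified, its conclusion applies verbatim: there exist $||\cdot||' \sim ||\cdot||$ and $(x^*, \alpha) \in K^{a\#}(||\cdot||') \cap (Y^* \times \mathbb{P})$ with $\bar x \in \Eff(\Omega \mid f, C_{||\cdot||'}(x^*, \alpha))$, and if $\varphi$ is $-C_{||\cdot||'}(x^*, \alpha)$ representing, the scalarization equivalences of Proposition \ref{prop:scal_result_Eff_BP} hold. There is no real obstacle here; the argument is essentially bookkeeping, the only substantive input being Krein--Rutman to secure well-basedness (and hence $0\notin \cl\, S_{-K}$) from pointedness in finite dimension.
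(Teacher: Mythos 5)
Your proof is correct and follows essentially the same route as the paper: reduce to Corollary \ref{cor:main_scalarization_result_PEff_A_normed} by invoking Krein--Rutman to get $K^{\#}\neq\emptyset$, deduce $0\notin\cl\,S_{-K}(||\cdot||)$, and let finite-dimensional compactness (Heine--Borel) handle the remaining hypotheses. The only cosmetic difference is that the paper cites an external result to pass from $K^{\#}\neq\emptyset$ to $0\notin\cl\,S_{-K}(||\cdot||)$, whereas you route this through the well-basedness equivalences already recorded in Remark \ref{rem:clconvKwell-based} and the proof of Proposition \ref{prop:separation_conditions_2}, and you verify assumption a) explicitly rather than leaving it implicit.
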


\begin{proof} By the well-known Krein-Rutman theorem (see, e.g., \cite[Th. 2.4.7]{Khanetal2015}) we know that $K^\# \neq \emptyset$, and by \cite[Th. 3.1 ($4^\circ$)]{GuenKhaTam23b} it follows that $0 \notin \cl\, S_{-K}(||\cdot||)$. So, the assumptions of Corollary \ref{cor:main_scalarization_result_PEff_A_normed} are satisfied.
\end{proof}

\begin{remark} \label{rem:PEFF_Henig_A}
    Note that Corollaries \ref{cor:main_scalarization_result_PEff_A_BP}, \ref{cor:main_scalarization_result_PEff_A_normed} and \ref{cor:main_scalarization_result_PEff_A_normed_finite_dim}  provide necessary conditions for $\mathbb{A}$-proper efficiency (in the sense of Definition \ref{def:pEff_A}), including the proper efficiency concepts in the senses of Benson, Borwein and Hurwicz (as mentioned in Remark \ref{rem:proper_efficiency_A}).
    Moreover, we gain scalarization results for the proper efficiency concept in the sense of Henig taking into account the known relationships between proper efficiency concepts in vector optimization.
	In order to emphasize this fact, consider a nontrivial cone $K \subseteq Y$ with normlike-base $B_K(\psi)$ and suppose that either $K$ is solid or $\psi$ is continuous. Assume that ${\rm PEff}_{He}(\Omega \mid f, K) \subseteq {\rm PEff}_{\mathbb{A}}(\Omega \mid f, K)$. Taking into account Remark \ref{rem:EffBPConeIsInPEffHenig}, in this subsection, we have provided sufficient conditions for the equivalence of the following assertions:
	\begin{itemize}
		\item[$1^\circ$] $\bar x \in {\rm PEff}_{He}(\Omega \mid f, K)$.
		\item[$2^\circ$] $\bar x \in {\rm PEff}_{\mathbb{A}}(\Omega \mid f, K)$.
		\item[$3^\circ$] There is $(x^*, \alpha) \in K^{a\#} \cap (Y^* \times \Rpos)$ such that $\bar x \in \Eff(\Omega \mid f, C_\psi(x^*, \alpha))$ (or any other equivalent assertion given in Proposition \ref{prop:scal_result_Eff_BP} is valid whenever the corresponding assumptions in Proposition \ref{prop:scal_result_Eff_BP} are satisfied),
	\end{itemize}
	where in a normed setting the third assertion is replaced by 
	\begin{itemize}
		\item[$3^\circ$'.] There is a norm $||\cdot||' \sim ||\cdot||$ 
		and a pair $(x^*, \alpha) \in K^{a\#}(||\cdot||') \cap (Y^* \times \mathbb{P})$ such that $\bar x \in \Eff(\Omega \mid f, C_{||\cdot||'}(x^*, \alpha))$
        (or any other equivalent assertion given in Proposition \ref{prop:scal_result_Eff_BP}, applied for $||\cdot||'$ in the role of $\psi$, is valid whenever the corresponding assumptions in Proposition \ref{prop:scal_result_Eff_BP} are satisfied).
	\end{itemize}
    From this perspective, $3^\circ$ (respectively, $3^\circ$') is a necessary and sufficient condition for the proper efficiency of $\bar x \in \Omega$.
\end{remark}

\begin{remark}
	Under the assumptions in Corollary \ref{cor:main_scalarization_result_PEff_A_BP} a) (in particular the convexity of $A$), we were able to replace the separation condition 
	\eqref{eq:cl_intersection}
	by
	$0 \notin {\rm cl}\, S_{-K}$ and $\bar x \in {\rm PEff}_{\mathbb{A}}(\Omega \mid f, K)$ (which implies the condition $A \cap (-K) = \{0\}$)
	in our scalarization results.
	In contrast, the scalarization result (for Henig and Benson proper efficiency concepts) by Garc\'{i}a-Casta\~{n}o, Melguizo-Padial and Parzanese \cite[Cor. 3.10]{CastanoEtAl2023} in real normed spaces still contains a separation condition (SSP), also under convexity of $A$ (ensured by a convexlikeness assumption on $f$). 

    It is worth mentioning that a novel aspect of our work in comparison to other conic scalarization related works in the normed space framework (e.g. \cite{Kasimbeyli2010, Kasimbeyli2013}, \cite{CastanoEtAl2023}) is the treatment of equivalent norms in the scalarization results (as done in Corollaries \ref{cor:main_scalarization_result_PEff_A_normed} and \ref{cor:main_scalarization_result_PEff_A_normed_finite_dim}).
\end{remark}

\begin{example} \label{ex:l2spaceBPtypeCone}
    As in Example \ref{ex:BPtypeCones} (b), consider the (not well-based) natural ordering cone $K := (\ell_2)_+$ in $\ell_2$, which is in fact a Bishop-Phelps cone $C_{\psi_y}(x^*_y, 1)$ based on $x^*_y \in C^\#$ and the continuous norm $\psi_y$. Note that $y = (y_n) \in \ell_2$ with $y_n > 0$ for all $n \in \mathbb{N}$. 
    Since $C$ is not well-based, we have $0 \in \cl\, S_{-K}(||\cdot||_2)$. Consequently, Corollary \ref{cor:main_scalarization_result_PEff_A_normed} can not be applied to this cone $K$. In view of Lemma \ref{lem:BPcone}, we know that \eqref{eq:sep_condition_9} is valid for $(x^*_y, 1)$ in the role of $(x^*, \alpha)$, and $\psi_y$ in the role of $\psi$. In particular, we have $0 \notin \cl\, S_{-K}(\psi_y)$. Consider $\bar x \in {\rm PEff}_{\mathbb{A}}(\Omega \mid f, K)$, and assume that $A := \mathbb{A}(\bar x)$ is a weakly closed cone in $Y$ with $\{0\} \neq f[\Omega] - f(\bar x) \subseteq A$ such that $\cl\, S_{A}^0(\psi_y)$ is weakly compact (hence $\cl_w\,B_A(\psi_y)\, (\subseteq \cl\, S_{A}^0(\psi_y))$ is weakly compact too). This ensures the validity of condition \eqref{ass:com3}.
    According to Corollary \ref{cor:main_scalarization_result_PEff_A_BP_2} (applied to $\ell_2$ with the weak topology), the conclusion of Theorem \ref{th:main_scalarization_result_PEff} is valid (with $\psi_y$ in the role of $\psi$). 
\end{example}

\subsection{Scalarization results for the concept of proper efficiency in the sense of Henig} \label{sec:scalarization_PEff_Henig}

Let us end the paper by stating some further scalarization results for the concept of proper efficiency in the sense of Henig by making special use of the structure of the dilating cones $D \in \mathcal{D}(K)$. From now on, we choose the cone $\bar D := Y \setminus (-\intt\, D)$ in the role of $A$.

\begin{lemma} \label{lem:dilating_cone_BP}
	Assume that $Y$ is a real locally convex space, $K \subseteq Y$ is a nontrivial cone, and $\bar D := Y \setminus (-\intt\, D)$ for some $D \in \mathcal{D}(K)$. Assume that $B_{K}$ and $B_{\bar D}$ are normlike-bases for $K$ and $\bar D$, respectively, constructed with respect to a continuous seminorm $\psi$. Suppose that ${\rm cl}\, S_{-K}$ is compact, and
	$$
	({\rm cl}\,S_{\bar D}^0) \cap ({\rm cl}\, S_{-K}) = \emptyset.
	$$
	Then, there exists $(x^*, \alpha) \in K^{a\#} \cap (Y^* \times \mathbb{P})$ such that
	$$
	K \setminus \{0\} \subseteq C^>_\psi(x^*, \alpha) = \intt\, C_\psi(x^*, \alpha) \subseteq \intt\, D.
	$$	
\end{lemma}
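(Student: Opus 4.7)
The plan is to apply the strict cone separation result (Proposition \ref{prop:strict_cone_separation_if}) to the pair of cones $(K, \bar D)$, with $A := \bar D$. All hypotheses are at hand: $Y$ is locally convex, $K$ and $\bar D$ are nontrivial cones admitting the normlike-bases $B_K$ and $B_{\bar D}$ (note that $\bar D$ is indeed a cone because $D \in \mathcal{D}(K)$ is nontrivial and hence $0 \notin -\intt D$), the set $\cl\, S_{-K}$ is compact, and the separation condition $(\cl\, S_{\bar D}^0) \cap (\cl\, S_{-K}) = \emptyset$ is assumed. The proposition will then furnish a pair $(x^*, \alpha) \in K^{a\#} \cap (Y^* \times \mathbb{P})$ satisfying
$$
\bar D \cap (-C_\psi(x^*, \alpha)) = \{0\} \quad \text{and} \quad K \setminus \{0\} \subseteq C_\psi^>(x^*, \alpha) = \cor\, C_\psi(x^*, \alpha).
$$

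Next, I would promote ``$\cor$'' to ``$\intt$'' using the continuity of $\psi$. Since $K \setminus \{0\} \neq \emptyset$ sits inside $C_\psi^>(x^*, \alpha)$, the latter is nonempty; by Lemma \ref{lem:properties_BP} ($6^\circ$), continuity of $\psi$ gives $\intt\, C_\psi(x^*, \alpha) \neq \emptyset$, and Lemma \ref{lem:properties_BP_fcns} ($3^\circ$) combined with convexity of $C_\psi(x^*, \alpha)$ then produces $C_\psi^>(x^*, \alpha) = \cor\, C_\psi(x^*, \alpha) = \intt\, C_\psi(x^*, \alpha)$. This already yields the first inclusion $K \setminus \{0\} \subseteq C_\psi^>(x^*, \alpha) = \intt\, C_\psi(x^*, \alpha)$ asserted in the conclusion.

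For the remaining inclusion $\intt\, C_\psi(x^*, \alpha) \subseteq \intt\, D$, I would simply unfold the definition $\bar D = Y \setminus (-\intt\, D)$. If $y \in -C_\psi(x^*, \alpha)$ with $y \neq 0$, then by the first separation property $y \notin \bar D$, which means $y \in -\intt\, D$. Negating, $C_\psi(x^*, \alpha) \setminus \{0\} \subseteq \intt\, D$, and since $0 \notin \intt\, C_\psi(x^*, \alpha)$ (because $x^* \neq 0$, as $x^* \in K^\#$ and $K$ is nontrivial, so $C_\psi(x^*, \alpha) \neq Y$ by Lemma \ref{lem:properties_BP} ($5^\circ$)), the desired inclusion $\intt\, C_\psi(x^*, \alpha) \subseteq \intt\, D$ follows.

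The heavy lifting is done entirely by Proposition \ref{prop:strict_cone_separation_if}; the only subtle conceptual step is the translation of the separation conclusion ``$\bar D \cap (-C_\psi(x^*, \alpha)) = \{0\}$'' into the containment ``$C_\psi(x^*, \alpha) \setminus \{0\} \subseteq \intt\, D$'' via the very definition $\bar D = Y \setminus (-\intt\, D)$, which is what makes the choice $A := \bar D$ the natural one. No genuine obstacle is expected; the remaining work is purely bookkeeping around $\cor$ versus $\intt$ under the continuity of $\psi$.
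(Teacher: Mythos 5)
Your proposal is correct and follows essentially the same route as the paper: apply Proposition \ref{prop:strict_cone_separation_if} with $A:=\bar D$, use the continuity of $\psi$ to identify $C^>_\psi(x^*,\alpha)$ with $\intt\,C_\psi(x^*,\alpha)$, and unfold the definition $\bar D = Y\setminus(-\intt\,D)$ to convert the separation conclusion into the inclusion in $\intt\,D$. The only cosmetic difference is that the paper works with the functional form of the separation inequalities while you use the equivalent set-theoretic form $\bar D\cap(-C_\psi(x^*,\alpha))=\{0\}$, which in fact yields the slightly stronger containment $C_\psi(x^*,\alpha)\setminus\{0\}\subseteq\intt\,D$.
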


\begin{proof}
	First, note that $\emptyset \neq K \setminus \{0\} \subseteq C^>_\psi(x^*, \alpha) = \intt\, C_\psi(x^*, \alpha)$ for all $(x^*, \alpha) \in K^{a\#} \cap (Y^* \times \mathbb{P})$ taking into account the continuity of $\psi$. By Proposition \ref{prop:strict_cone_separation_if}, there exists $(x^*, \alpha) \in K^{a\#} \cap (Y^* \times \mathbb{P})$ such that
	\begin{equation}
		\label{eq:sepBarDand-K}
		x^*(d) + \alpha \psi(d) \geq  0 > x^*(-k) + \alpha \psi(k)   \quad \mbox{for all } d \in \bar D \mbox{ and all }k \in K \setminus \{0\}.
	\end{equation}	
	Any $x \in C^>_\psi(x^*, \alpha)$ satisfies $0 > x^*(-x) + \alpha \psi(x)$, hence $-x \notin \bar D$ in view of \eqref{eq:sepBarDand-K}, and so $x \in \intt\, D$.
\end{proof}

\begin{theorem} \label{th:main_scalarization_result_PEff_Henig_1}
	Assume that $Y$ is a real locally convex space, and $K \subseteq Y$ is a nontrivial cone. Let $\bar x \in {\rm PEff}_{He}(\Omega \mid f,K)$, i.e., there is $D \in \mathcal{D}(K)$ such that $\bar x \in {\rm Eff}(\Omega \mid f,D)$. Suppose that $D$ is closed. Define $\bar D := Y \setminus (-\intt\, D)$. Assume that $B_{K}$ and $B_{\bar D}$ are normlike-bases for $K$ and $\bar D$, respectively, constructed with respect to a continuous seminorm $\psi$. 
	Suppose that ${\rm cl}\, S_{-K}$ is compact, and
	$$
	({\rm cl}\,S_{\bar D}^0) \cap ({\rm cl}\, S_{-K}) = \emptyset.
	$$
	Then, the conclusion of Theorem \ref{th:main_scalarization_result_PEff} holds. 
\end{theorem}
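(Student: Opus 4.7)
The plan is to reduce the claim to the already-established Lemma \ref{lem:dilating_cone_BP} by showing that the cone produced there is actually contained in the Henig-dilating cone $D$, so efficiency carries over. Since the hypotheses on $K$, $\bar D$ and $\psi$ are exactly those of Lemma \ref{lem:dilating_cone_BP}, I can immediately extract a pair $(x^*,\alpha)\in K^{a\#}\cap(Y^*\times\mathbb{P})$ such that
\[
K\setminus\{0\}\subseteq C^>_\psi(x^*,\alpha)=\intt\,C_\psi(x^*,\alpha)\subseteq\intt\,D.
\]

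The next step is to upgrade the interior inclusion $\intt\,C_\psi(x^*,\alpha)\subseteq\intt\,D$ to the full inclusion $C_\psi(x^*,\alpha)\subseteq D$. Because $\psi$ is continuous, so is $\varphi_{x^*,\alpha}$, hence $C_\psi(x^*,\alpha)$ is closed by Lemma \ref{lem:properties_BP} ($3^\circ$); by Lemma \ref{lem:properties_BP} ($6^\circ$) it also has nonempty interior. For a closed convex set with nonempty interior one has $C_\psi(x^*,\alpha)=\cl\,\intt\,C_\psi(x^*,\alpha)$. Combining this with the closedness of $D$ gives
\[
C_\psi(x^*,\alpha)=\cl\,\intt\,C_\psi(x^*,\alpha)\subseteq\cl\,\intt\,D\subseteq D.
\]

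Now I exploit the monotonicity of the efficient set in the ordering cone: since $C_\psi(x^*,\alpha)\subseteq D$, we have $-C_\psi(x^*,\alpha)\setminus\{0\}\subseteq -D\setminus\{0\}$, and thus $\Eff(\Omega\mid f,D)\subseteq\Eff(\Omega\mid f,C_\psi(x^*,\alpha))$. Because $\bar x\in\Eff(\Omega\mid f,D)$ by the choice of $D$ in the definition of Henig proper efficiency, it follows at once that $\bar x\in\Eff(\Omega\mid f,C_\psi(x^*,\alpha))$.

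Finally, to obtain the full conclusion of Theorem \ref{th:main_scalarization_result_PEff} I invoke Lemma \ref{lem:properties_BP} ($7^\circ$) to note $C_\psi(x^*,\alpha)\neq\ell(C_\psi(x^*,\alpha))$ (its interior is nonempty and it is a proper subset of $Y$), so Proposition \ref{prop:scal_result_Eff_BP} applies and delivers all equivalent scalar reformulations whenever $\varphi$ is $-C_\psi(x^*,\alpha)$ representing. The only slightly delicate point is the passage from interiors to whole cones in the second paragraph; this is where closedness of $D$ and continuity of $\psi$ are essential, and it is the sole place where the extra hypotheses (beyond those of Lemma \ref{lem:dilating_cone_BP}) are genuinely used.
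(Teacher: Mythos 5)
Your proposal is correct and follows the same route as the paper: apply Lemma \ref{lem:dilating_cone_BP}, pass from $\intt\,C_\psi(x^*,\alpha)\subseteq\intt\,D$ to $C_\psi(x^*,\alpha)\subseteq D$ using closedness of $D$, use the antitonicity of $\Eff$ in the ordering cone, and finish via Lemma \ref{lem:properties_BP} ($7^\circ$) and Proposition \ref{prop:scal_result_Eff_BP}. Your only addition is an explicit justification of the inclusion $C_\psi(x^*,\alpha)\subseteq D$ via $C_\psi(x^*,\alpha)=\cl\,\intt\,C_\psi(x^*,\alpha)$, which the paper leaves implicit; that justification is sound.
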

\begin{proof}
	By Lemma \ref{lem:dilating_cone_BP} there exists $(x^*, \alpha) \in K^{a\#}  \cap (Y^* \times \mathbb{P})$ such that
	$$
	K \setminus \{0\} \subseteq C^>_\psi(x^*, \alpha) = \intt\, C_\psi(x^*, \alpha) \subseteq \intt\, D.
	$$
	Since $D$ is closed, we have $C_\psi(x^*, \alpha) \subseteq D$, hence $C_\psi(x^*, \alpha) \setminus \{0\}  \subseteq D \setminus \{0\}$. 
	We conclude that
	$$\bar x \in {\rm Eff}(\Omega \mid f,D) \subseteq {\rm Eff}(\Omega \mid f, C_\psi(x^*, \alpha)).$$ 
	Finally, noting that $C_\psi(x^*, \alpha) \neq \ell(C_\psi(x^*, \alpha))$ by Lemma \ref{lem:properties_BP} ($7^\circ$), Proposition \ref{prop:scal_result_Eff_BP} yields the rest of the conclusion.
\end{proof}

\begin{theorem}  \label{th:main_scalarization_result_PEff_Henig_2}
	Assume that $Y$ is a real locally convex space, and $K \subseteq Y$ is a nontrivial cone. Let $\bar x \in {\rm PEff}_{He}(\Omega \mid f,K)$, i.e., there is $D \in \mathcal{D}(K)$ such that $\bar x \in {\rm Eff}(\Omega \mid f,D)$.  Define $\bar D := Y \setminus (- \intt\, D)$. Assume that $B_{K}$ and $B_{\bar D}$ are normlike-bases for $K$ and $\bar D$, respectively, constructed with respect to a continuous seminorm $\psi$.  
	Suppose that ${\rm cl}\, S_{-K}$ is compact, and
	$$
	({\rm cl}\,S_{\bar D}^0) \cap ({\rm cl}\, S_{-K}) = \emptyset.
	$$
	Then, there exists $(x^*, \alpha) \in K^{a\#} \cap (Y^* \times \mathbb{P})$ such that	$\bar x \in {\rm WEff}(\Omega \mid f, C_\psi(x^*, \alpha))$.
	Moreover, if $\varphi$ is $-C_\psi(x^*, \alpha)$ representing, 
    then the other equivalent assertions given in Proposition \ref{prop:scal_result_WEff_BP} are valid whenever the corresponding assumptions in Proposition \ref{prop:scal_result_WEff_BP} are satisfied.
\end{theorem}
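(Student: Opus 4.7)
The plan is to closely follow the structure of the proof of Theorem \ref{th:main_scalarization_result_PEff_Henig_1}, with appropriate adjustments for the weak efficiency setting. First, I would apply Lemma \ref{lem:dilating_cone_BP} to the cone $K$ and the dilating cone $D \in \mathcal{D}(K)$ (with $\bar D := Y \setminus (-\intt\,D)$), using precisely the compactness of $\cl\,S_{-K}$ and the separation hypothesis $(\cl\,S_{\bar D}^0) \cap (\cl\,S_{-K}) = \emptyset$. This yields a pair $(x^*, \alpha) \in K^{a\#} \cap (Y^* \times \mathbb{P})$ such that
\begin{equation*}
K \setminus \{0\} \subseteq C_\psi^>(x^*, \alpha) = \intt\,C_\psi(x^*, \alpha) \subseteq \intt\,D.
\end{equation*}
In particular, $\intt\,C_\psi(x^*, \alpha) \neq \emptyset$, which will be needed to invoke Proposition \ref{prop:scal_result_WEff_BP}.

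Next, I would transfer the weak efficiency property from the dilating cone $D$ to the Bishop--Phelps type cone $C_\psi(x^*, \alpha)$. Since by assumption $\bar x \in \WEff(\Omega \mid f, D)$, there is no $x \in \Omega$ with $f(x) - f(\bar x) \in -\intt\,D$. Because $\intt\,C_\psi(x^*, \alpha) \subseteq \intt\,D$, any $x \in \Omega$ satisfying $f(x) - f(\bar x) \in -\intt\,C_\psi(x^*, \alpha)$ would belong to $f(\bar x) - \intt\,D$, a contradiction. Hence $\bar x \in \WEff(\Omega \mid f, C_\psi(x^*, \alpha))$. Note that, unlike Theorem \ref{th:main_scalarization_result_PEff_Henig_1}, the closedness of $D$ is not needed here, since only the interior inclusion (and not the inclusion of the full cones) is required for the weak-efficiency transfer.

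Finally, I would observe that $\intt\,C_\psi(x^*, \alpha) \neq \emptyset$ and $\varphi$ is assumed $-C_\psi(x^*, \alpha)$ representing, so the hypotheses of Proposition \ref{prop:scal_result_WEff_BP} are satisfied; applying it to $\bar x$ immediately yields the equivalent scalarization statements. I expect no real obstacle: all heavy lifting (the nonlinear separation producing $(x^*, \alpha)$ with the sandwich inclusion into $\intt\,D$) is encapsulated in Lemma \ref{lem:dilating_cone_BP}, and the rest is a straightforward monotonicity-type argument. The only subtlety worth flagging is that, because weak efficiency only involves the interior of the ordering cone, we can weaken the hypothesis on $D$ compared to Theorem \ref{th:main_scalarization_result_PEff_Henig_1}, and the continuity of $\psi$ is used implicitly (via Lemma \ref{lem:dilating_cone_BP}) to guarantee $\intt\,C_\psi(x^*, \alpha) = C_\psi^>(x^*, \alpha) \neq \emptyset$.
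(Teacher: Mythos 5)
Your proposal is correct and follows essentially the same route as the paper's own proof: invoke Lemma \ref{lem:dilating_cone_BP} to obtain $(x^*,\alpha) \in K^{a\#} \cap (Y^* \times \mathbb{P})$ with $\intt\,C_\psi(x^*,\alpha) \subseteq \intt\,D$, deduce ${\rm WEff}(\Omega \mid f, D) \subseteq {\rm WEff}(\Omega \mid f, C_\psi(x^*,\alpha))$, and conclude via Proposition \ref{prop:scal_result_WEff_BP}. Your side remark that the closedness of $D$ is not needed here (in contrast to Theorem \ref{th:main_scalarization_result_PEff_Henig_1}) is also consistent with the theorem's hypotheses.
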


\begin{proof}
	By Lemma \ref{lem:dilating_cone_BP} there exists $(x^*, \alpha) \in K^{a\#}\cap (Y^* \times \mathbb{P})$ such that
	$$
	K \setminus \{0\} \subseteq C^>_\psi(x^*, \alpha) = \intt\, C_\psi(x^*, \alpha) \subseteq \intt\, D,
	$$
	hence $\bar x \in {\rm Eff}(\Omega \mid f,D) \subseteq {\rm WEff}(\Omega \mid f,D) \subseteq {\rm WEff}(\Omega \mid f, C_\psi(x^*, \alpha))$. 
	Proposition \ref{prop:scal_result_WEff_BP} finishes the proof.
\end{proof}

\begin{remark} \label{rem:optimality_cond_weak}
	Taking Remark \ref{rem:EffBPConeSubsetWEffBPConeSubsetPEffHenig} into account, in Theorems \ref{th:main_scalarization_result_PEff_Henig_1} and \ref{th:main_scalarization_result_PEff_Henig_2}, we derived necessary (and sufficient) conditions for the proper efficiency in the sense of Henig of $\bar x \in \Omega$.
    
	Proposition \ref{prop:separation_conditions_1} gives sufficient conditions for the equality $({\rm cl}\,S_{\bar D}^0) \cap ({\rm cl}\, S_{-K}) = \emptyset$, which is given as an assumption in Theorems \ref{th:main_scalarization_result_PEff_Henig_1} and \ref{th:main_scalarization_result_PEff_Henig_2}.
	Suppose that there is $(x^*, \alpha) \in Y^* \times \mathbb{P}$ such that
	\eqref{eq:sep_condition_9} is valid.
	Notice that the set $\bar D$ is a nontrivial, closed cone in $Y$ with $\bar D \cap (-K) = \{0\}$.
	Assume that the nontrivial, convex cone $K$ is closed and that either ${\rm conv}({\rm cl}(B_{\bar D}) \cup \{0\})$ is closed or ${\rm cl}\,B_{\bar D}$ is compact.
	Then, Proposition \ref{prop:separation_conditions_1} ensures $
	({\rm cl}\,S_{\bar D}^0) \cap ({\rm cl}\, S_{-K}) = \emptyset.
	$
\end{remark}

\begin{remark} 
	The approach in this Section \ref{sec:scalarization_PEff_Henig} follows basically some ideas by Kasimbeyli \cite[Th. 4.4 and 5.7]{Kasimbeyli2010}, who deals (in a normed setting) with $\varepsilon$-conic neighborhoods for $K$ (as dilating cones) and supposes a separation property for a whole family of dilating cones. 
	
	In Section \ref{sec:scalarization_PEff_A}, one has a separation condition for $A = \mathbb{A}(\bar x)$ and $-K$ (for a given $\bar x$), and one can further ensure that $A$ is convex. Here in Section \ref{sec:scalarization_PEff_Henig} we have a separation condition for $\bar D = Y \setminus (-\intt\, D)$ and $-K$. One has to notice that the complement cone $\bar D$ is usually not convex if $D$ is convex. Moreover, the cone $\mathbb{A}(\bar x)$, which has a particular structure for specific choices of $\mathbb{A}$, is easier to handle than the cone $\bar D = Y \setminus (-\intt\, D)$ based on a general dilating cone $D \in \mathcal{D}(K)$.
\end{remark}

\section{Conclusions}  \label{sec:conclusions}

It is well-known that several scalarizing functions used in sca\-larization methods in vector optimization and in the treatment of optimization problems under uncertainty admit certain cone-representation and cone-monotonicity properties (e.g., methods in the sense of Gerstewitz  \cite{Gerstewitz1983,Gerstewitz1984}; Kasimbeyli \cite{Kasimbeyli2010, Kasimbeyli2013}; Pascoletti and Serafini \cite{PascolettiSerafini1984}; Zaffaroni \cite{Zaffaroni2003}).  In the general framework of real topological-linear spaces, our paper provides new insights into the relationships between a vector optimization problem based on an original (convex) cone, an updated vector optimization problem based on an enlarging / dilating cone of Bishop-Phelps type, and scalar optimization problems involving (Bishop-Phelps type) cone-representing and cone-monotone scalarizing functions. Thus, our new scalarization results with respect to the concepts of weak efficiency and different types of proper efficiency  (e.g., in the sense of Benson; Borwein; Henig; Hurwicz) are useful for several established scalarization methods in vector optimization.

In future research, we would like to derive Bishop-Phelps type scalarization results for approximate solutions of vector optimization problems in real topological-linear spaces, which would extend recent scalarization results obtained by Garc\'{i}a-Casta\~{n}o, Melguizo-Padial and Parzanese \cite{CastanoEtAl2023}. Moreover, the derivation of duality statements for such vector optimization problems based on Bishop-Phelps type scalarization is of interest (see also Kasimbeyli and Karimi \cite{KasimbeyliKarimi2021}). 


\section*{Data Availability}
No datasets were generated or analyzed during the current study.

\section*{Acknowledgments}
The authors would like to express their gratitude to the editor and the anonymous reviewers for their valuable comments, which helped to improve the manuscript.

\bibliography{references}


\end{document}